\newlength{\defbaselineskip}
\theoremstyle{plain}
\newtheorem{thm}{Theorem}[section]
\newtheorem{cor}[thm]{Corollary}
\newtheorem{pro}[thm]{Problem }
\newtheorem{lem}[thm]{Lemma}
\theoremstyle{definition}
\newtheorem{defn}{Definition}[section]
\newtheorem{ass}{Assumption}[section]
\newtheorem{rmk}{Remark}[section]
\renewcommand{\d}{\mathrm{d}}
\newcommand{\eps}{\varepsilon}
\makeatletter\@addtoreset{equation}{section} \makeatother
\begin{document}

\title{  Partially Observed Optimal  Control for
 Mean-Field SDEs
\thanks{This work was supported by the Natural Science Foundation of Zhejiang Province
for Distinguished Young Scholar  (No.LR15A010001),  and the National Natural
Science Foundation of China (No.11471079, 11301177)}}
\date{}
\author{\textbf{Maoning Tang}, \textbf{Qingxin Meng
}\thanks{{\small Corresponding author. E-mail address: mqx@zjhu.edu.cn,
}}\\
{\small {Department of Mathematics, Huzhou University, Zhejiang 313000, China}}} \maketitle

\begin{abstract}

In this paper, we are  concerned  with
a stochastic  optimal control  problem of
mean-field type  under  partial observation, where the state equation is
governed by the  controlled
nonlinear mean-field stochastic
  differential equation, moreover the observation noise is allowed to enter into
the  state equation   and the observation
coefficients may depend  not only on the
control process and  but also on  its probability distribution.
Under standard assumptions on the coefficients, by dual analysis and
  convex  variation, we establish
the maximum principle for optimal
control in a strong sense as well as a weak one, respectively.
 As an application,
a partially observed   linear quadratic
control  problem of mean-field type  is
studied detailed and the corresponding
dual characterization  and state feedback
presentation of the partially observed  optimal control are  obtained by the stochastic maximum principles and the classic  technique of completing squares.

\end{abstract}

\textbf{Keywords}:
Maximum Principle, Mean-Field
Stochastic Differential Equation,
Mean-Field  Backward
Stochastic Differential Equation,
Partial Observation, Girsanov's  Theorem

\section{ Introduction}

\subsection{Basic Notations}
In this subsection, we introduce
some basic notations  which will be
used in this paper.
Let ${\cal T} : = [0, T]$ denote a finite time index, where $0<T <
\infty$. We consider a complete probability space $( \Omega,
{\mathscr F}, {\mathbb P} )$  equipped with two one-dimensional
standard Brownian motions $\{W(t), t \in {\cal T}\}$ and $\{Y(t),t \in {\cal T}\},$
respectively. Let $%
\{\mathscr{F}^W_t\}_{t\in {\cal T}}$ and $%
\{\mathscr{F}^Y_t\}_{t\in {\cal T}}$ be $\mathbb P$-completed natural
filtration generated by $\{W(t), t\in {\cal T}\}$ and $\{Y(t), t\in {\cal T}\},$ respectively. Set $\{\mathscr{F}_t\}_{t\in {\cal T}}:=\{\mathscr{F}^W_t\}_{t\in {\cal T}}\bigvee
\{\mathscr{F}^Y_t\}_{t\in {\cal T}}, \mathscr F=\mathscr F_T.$   Denote by $\mathbb E[\cdot]$ the expectation
under the probablity $\mathbb P.$
 Let $E$ be a Euclidean space. The inner product in $E$ is denoted by
$\langle\cdot, \cdot\rangle,$ and the norm in $ E$ is denoted by $|\cdot|.$
Let $A^{\top }$ denote the
transpose of the matrix or vector $A.$
For a
function $\phi:\mathbb R^n\longrightarrow \mathbb R,$ denote by
$\phi_x$ its gradient. If $\phi: \mathbb R^n\longrightarrow \mathbb R^k$ (with
$k\geq 2),$ then $\phi_x=(\frac{\partial \phi_i}{\partial x_j})$ is
the corresponding $k\times n$-Jacobian matrix. By $\mathscr{P}$ we
denote the
predictable $\sigma$ field on $\Omega\times [0, T]$ and by $\mathscr %
B(\Lambda)$ the Borel $\sigma$-algebra of any topological space
$\Lambda.$ In the follows, $K$ represents a generic constant, which
can be different from line to line.

 Next we introduce some spaces of random variable and stochastic
 processes. For any $\alpha, \beta\in [1,\infty),$ we let

$\bullet$~~$M_{\mathscr{F}}^\beta(0,T;E):$ the space of all $E$-valued and ${%
\mathscr{F}}_t$-adapted processes $f=\{f(t,\omega),\ (t,\omega)\in \cal T
\times\Omega\}$ satisfying
$
\|f\|_{M_{\mathscr{F}}^\beta(0,T;E)}\triangleq{\left (\mathbb E\bigg[\displaystyle%
\int_0^T|f(t)|^ \beta dt\bigg]\right)^{\frac{1}{\beta}}}<\infty. $

$\bullet$~~$S_{\mathscr{F}}^\beta (0,T;E):$ the space of all $E$-valued and ${%
\mathscr{F}}_t$-adapted c\`{a}dl\`{a}g processes $f=\{f(t,\omega),\
(t,\omega)\in {\cal T}\times\Omega\}$ satisfying $
\|f\|_{S_{\mathscr{F}}^\beta(0,T;E)}\triangleq{\left (\mathbb E\bigg[\displaystyle\sup_{t\in {\cal T}}|f(t)|^\beta \bigg]\right)^{\frac
{1}{\beta}}}<+\infty. $

$\bullet$~~$L^\beta (\Omega,{\mathscr{F}},P;E):$ the space of all
$E$-valued random variables $\xi$ on $(\Omega,{\mathscr{F}},P)$
satisfying $ \|\xi\|_{L^\beta(\Omega,{\mathscr{F}},P;E)}\triangleq
\sqrt{\mathbb E|\xi|^\beta}<\infty. $

$\bullet$~~$M_{\mathscr{F}}^\beta(0,T;L^\alpha (0,T; E)):$ the space of all $L^\alpha (0,T; E)$-valued and ${%
\mathscr{F}}_t$-adapted processes $f=\{f(t,\omega),\ (t,\omega)\in[0,T]%
\times\Omega\}$ satisfying $
\|f\|_{\alpha,\beta}\triangleq{\left\{\mathbb E\bigg[\left(\displaystyle
\int_0^T|f(t)|^\alpha
dt\right)^{\frac{\beta}{\alpha}}\bigg]\right\}^{\frac{1}{\beta}}}<\infty. $

\subsection{Formulation of Optimal control Problem of Mean-Field Type Under  Partial Observation}

In this subsection, under  partial observations,  we formulate two class of   optimal control problems of mean-field type in a weak form and a strong form, respectively. On probability space $( \Omega,
{\mathscr F}, {\mathbb P} ),$  we consider
the following controlled
mean-field stochastic differential equation
\begin{equation}
\displaystyle\left\{
\begin{array}{lll}
dx(t) & = & b(t,x(t),\mathbb E [ x(t)],u(t), \mathbb E [ u(t)])dt+\displaystyle
g(t,x(t),\mathbb E [ x(t)],u(t),
\mathbb E[u(t)])dW(t)
\\&&+\displaystyle {\tilde g}(t,x(t),\mathbb E [ x(t)], u(t), \mathbb E [ u(t)])dW^u(t), \\
\displaystyle x(0) & = & a\in \mathbb{R}^n,
\end{array}%
\right.  \label{eq:1.1}
\end{equation}%
with an obvervation
\begin{equation}\label{eq:1.2}
\displaystyle\left\{
\begin{array}{lll}
dY(t) & = & h(t,x(t),\mathbb E [ x(t)], u(t), \mathbb E [ u(t)])dt+dW^u(t),  \\
\displaystyle y(0) & = & 0
,\end{array}
\right.
\end{equation}
where $b: {\cal T} \times \Omega \times {\mathbb R}^n \times
{\mathbb R}^n
 \times U \times U \rightarrow {\mathbb R}^n$, $g:
{\cal T} \times \Omega \times {\mathbb R}^n \times {\mathbb R}^n
 \times U \times U\rightarrow {\mathbb R}^n$, $\tilde g: {\cal T} \times \Omega \times {\mathbb R}^n \times {\mathbb R}^n
 \times U \times U\rightarrow {\mathbb R}^n $, $h: {\cal T} \times \Omega \times {\mathbb R}^n \times {\mathbb R}^n
 \times U \times U \rightarrow {\mathbb R}$,  are given random mapping with
 $U$ being a nonempty convex subset of $\mathbb R^k.$ In the above equations, $u(\cdot)$ is our admissible control
 process defined as follows.

 \begin{defn}
   An admissible control process is defined as a
   stochastic  process $u:\cal T\times  \Omega\longrightarrow  U$ which is
   $\{\mathscr{F}^Y_t\}_{t\in \cal T}$
-adapted   and satisfies
\begin{equation} \label{eq:1.33}
  \mathbb E\bigg[\bigg(\int_0^T|u(t)|^2
dt\bigg)^{2}\bigg]<\infty.
\end{equation}
The set of all admissible controls
is denoted by $U_{ad}^W.$
\end{defn}
\begin{rmk}
In the literature (see, e.g., Tang(1998)), we know that a control process is said to be partially observed
if the control is nonanticipative  functional of the observation $Y(\cdot).$ A set of controls is said to be partially observed if
its element is partially observed. Obviously, the set $U_{ad}^W$ of all admissible control is partially observed.
\end{rmk}
 Now we make the following standard  assumptions
 on the coefficients of the equations \eqref{eq:1.1}
 and \eqref{eq:1.2}.

 \begin{ass}\label{ass:1.1}
The coefficients $b$, $g,\tilde g$ and $h$  are ${\mathscr P} \otimes {\cal
B} ({\mathbb R}^n) \otimes {\mathscr B} ({\mathbb R}^n) \otimes {\mathscr B}
(U) \otimes {\mathscr B}
(U) $-measurable. For each $(x,y,
u,v) \in \mathbb {R}^n\times \mathbb R^n \times U \times U$, $b (\cdot, x,y,
u,v), g(\cdot,x,y,u, v)$, $\tilde g (\cdot, x, y, u,v)$
and $h(\cdot, x, y, u,v)$ are all $\{\mathscr{F}_t\}_{t\in \cal T}$-adapted processes. For almost all $(t, \omega)\in \cal T
\times \Omega$, the mapping
\begin{eqnarray*}
(x,y,u,v) \rightarrow \varphi(t,\omega,x, y, u,v)
\end{eqnarray*}
 is continuous differentiable with respect to $(x, y,u,v)$ with
appropriate growths, where $\varphi=b, g,  \tilde g$ and $h.$  More precisely, there exists a constant $C
> 0$  such that for  all $x,y\in \mathbb
R^n, u,v\in U$ and a.e. $(t, \omega)\in {\cal T} \times \Omega,$
\begin{eqnarray*}
\left\{
\begin{aligned}
& (1+|x|+|y|+|u|+|v|)^{-1}|\phi(t,x,y,u,v)|
+|\phi_x(t,x,y,u,v)|
\\&\quad\quad+|\phi_y(t,x,y,u,v)|
+|\phi_u(t,x,y,u,v)|
+|\phi_v(t,x,y,u,v)|\leq C, \varphi=b, g,\tilde g;
\\&|h(t,x,y,u,v)| +|h_{x}(t,x,y,u,v)|+|h_{y}(t,x,y,u,v)|+|h_{u}(t,x,y,u,v)|+|h_{v}(t,x,y,u,v)|\leq
C.
\end{aligned}
\right.
\end{eqnarray*}
\end{ass}
Now under Assumption \ref{ass:1.1}, we
begin to discuss  the well- posedness
of \eqref{eq:1.1} and \eqref{eq:1.2}.
Indeed, putting \eqref{eq:1.2} into the state equation \eqref{eq:1.1}, we get that
\begin{equation} \label{eq:1.3}
\displaystyle\left\{
\begin{array}{lll}
dx(t) & = & [(b-{\tilde g}h)(t,x(t),\mathbb E [ x(t)],u(t), \mathbb E [ u(t)])]dt
+\displaystyle
g(t,x(t),\mathbb E [ x(t)],u(t), \mathbb E [ u(t)])dW(t)\\&&
 +\displaystyle {\tilde g}(t,x(t),\mathbb E [ x(t)],u(t), \mathbb E [ u(t)])dY(t),
\\
\displaystyle x(0) & = & a.
\end{array}
\right.
\end{equation}
Under Assumption \ref{ass:1.1}, for any $u(\cdot)\in U_{ad}^W,$  by
Lemma \ref{lem:3.3} below,
\eqref{eq:1.3} admits a strong solution
$x(\cdot)\equiv x^u(\cdot)\in S^4_{\mathscr F}(0, T; \mathbb R^n).$  On the other hand, for any $u(\cdot)\in U_{ad}^W$ associated with
the corresponding solution $x^u(\cdot)$ of
\eqref{eq:1.3},  introduce  a stochastic
process $Z^u(\cdot)$  defined by the
unique solution  of the following mean-field
SDE \begin{equation}\label{eq:3.4}
\displaystyle\left\{
\begin{array}{lll}
dZ^u(t) & = & Z^u(t)h(t,x^u(t), \mathbb E [ x(t)], u(t), \mathbb E [ u(t)])dY(t), \\
\displaystyle Z^u(0) & = & 1.
\end{array}%
\right.
\end{equation}
Define a new probability measure $\mathbb P^u$ on $(\Omega, \mathscr F)$ by
$d\mathbb P^u=Z^u(1)d\mathbb P.$  Then from Girsanov's theorem and \eqref{eq:1.2}, $(W(\cdot),W^u(\cdot))$ is an
$\mathbb R^2$-valued standard Brownian motion defined in the new probability
space $(\Omega, \mathscr F, \{\mathscr{F}_t\}_{0\leq t\leq T},\mathbb P^u).$
So $(\mathbb P^u, X^u(\cdot), Y(\cdot), W(\cdot), W^u(\cdot))$ is a weak
solution on $(\Omega, \mathscr F, \{\mathscr{F}_t\}_{t\in \cal
T})$ of  \eqref{eq:1.1} and
\eqref{eq:1.2}.

 Now for  any given admissible control
 $ u(\cdot)\in U_{ad}^W $ and the  corresponding
 weak solution  $(\mathbb P^u, x^u(\cdot), Y(\cdot), W(\cdot), W^u(\cdot))$ of \eqref{eq:1.1} and
 \eqref{eq:1.2}, we introduce the following  cost functional in the weak form,
\begin{equation}\label{eq:1.6}
\begin{split}
J(u(\cdot ))=& \mathbb E^u\displaystyle\bigg[%
\int_{0}^{T}l(t,x(t),\mathbb E [ x(t)], u(t), \mathbb E [ u(t)])dt \\
& ~~~~~+m(X(T),\mathbb E [ x(T)])\bigg],
\end{split}%
\end{equation}
where $\mathbb E^u$ denotes the expectation with respect to the
probability space $(\Omega, \mathscr F, \{\mathscr{F}_t\}_{0\leq
t\leq T},\mathbb P^u)$ and  $l:
{\cal T} \times \Omega \times {\mathbb R}^n \times {\mathbb R}^n
 \times U \times U\rightarrow {\mathbb R},$ $m: \Omega \times {\mathbb R}^n \times {\mathbb R}^n \rightarrow {\mathbb R}$  are given random mappings
satisfying  the following assumption:

\begin{ass}\label{ass:1.2}
 $l$ is ${\mathscr P} \otimes {\cal
B} ({\mathbb R}^n) \otimes {\mathscr B} ({\mathbb R}^n) \otimes {\mathscr B}
(U) \otimes {\mathscr B}
(U) $-measurable, and $m$ is ${\cal F}_T \otimes {\mathscr B} ({\mathbb
R}^n) \otimes {\mathscr B} ({\mathbb R}^n)$-measurable. For each $(x,y,
u,v) \in \mathbb {R}^n\times \mathbb R^n \times U \times U$, $f (\cdot, x, y, u,v)$ is  an ${\mathbb F}$-adapted process, and $m(x,y)$
is an ${\cal F}_{T}$-measurable random variable. For almost all $(t, \omega)\in [0,T]
\times \Omega$, the mappings
\begin{eqnarray*}
(x,y,u,v) \rightarrow l(t,\omega,x, y, u,v)
\end{eqnarray*}
and
\begin{eqnarray*}
(x,y) \rightarrow m(\omega,x, y)
\end{eqnarray*}
are continuous differentiable with respect to $(x, y,u,v)$ with
appropriate growths, respectively.
More precisely, there exists a constant $C
> 0$  such that for  all $x,y\in \mathbb
R^n, u,v\in U$ and a.e. $(t, \omega)\in [0,T]
\times \Omega,$
\begin{eqnarray*}
\left\{
\begin{aligned}
&
(1+|x|+|y|+|u|+|v|)^{-1}\left (|l_x(t,x,y,u,v)|+|l_y(t,x,y,u,v)|+
|l_u(t,x,y,u,v)|+|l_v(t,x,y,u,v)|\right)
\\&\quad\quad+(1+|x|^2+|y|^2+|u|^2+|v|^2)^{-1}|l(t,x,y,u,v)|
 \leq C;
\\
& (1+|x|^2+|y|^2)^{-1}|m(x,y)| +(1+|x|+|y|)^{-1}(|m_x(x,y)|+|m_y(x,y)|)\leq
C.
\end{aligned}
\right.
\end{eqnarray*}
\end{ass}
 Under  Assumption \ref{ass:1.1} and
 \ref{ass:1.2},
 by the estimates \eqref{eq:1.8} and
 \eqref{eq:1.9},  we get that

\begin{eqnarray}
  \begin{split}
    |J(u(\cdot))|\leq  & K\mathbb E\bigg[\int_0^T
    |Z^u(t)|(1+|x^u(t)|^2+|\mathbb E[x^u(t)]|^2+|u(t)|^2+|\mathbb E[u(t)]|^2\bigg]
    \\ \leq & K  \bigg\{\mathbb E\bigg[\sup_{t\in {\cal T}}
    |Z^u(t)|^2\bigg]\bigg\}^{\frac{1}{2}}
    \bigg\{ \mathbb E\bigg[\sup_{
    {t\in \cal T}}
    |x(t)|^4\bigg]+\mathbb E\bigg[\bigg(\int_0^T|u(t)|^2
dt\bigg)^{2}\bigg] +1\bigg\}^{\frac{1}{2}}
    \\  <& \infty,
  \end{split}
\end{eqnarray}
which implies that
the cost functional is well-defined.

 Then we  can put forward the following partially observed optimal control problem  in its weak formulation,
 i.e., with changing
 the reference probability space $(\Omega, \mathscr F, \{\mathscr{F}_t\}_{0\leq
t\leq T},\mathbb P^u),$ as follows.

\begin{pro}
\label{pro:1.1} Find an admissible control $\bar{u}(\cdot)\in U_{ad}^W$ such that
\begin{equation*}  \label{eq:b7}
J(\bar{u}(\cdot))=\displaystyle\inf_{u(\cdot)\in U_{ad}^W}J(u(\cdot)),
\end{equation*}
subject to the
 state equation \eqref{eq:1.1}, the
 observation equation \eqref{eq:1.2}
 and the cost functional \eqref{eq:1.6}.

\end{pro}
Obviously, according to  Bayes' formula,
 the cost functional \eqref{eq:1.6} can be rewritten as
\begin{equation}\label{eq:1.7}
\begin{split}
J(u(\cdot ))=& \mathbb E\displaystyle\bigg[%
\int_{0}^{T}Z^u(t)l(t,x(t),\mathbb E [ x(t)], u(t), \mathbb E [ u(t)])dt \\
& ~~~~~+Z^u(T)m(x(T),\mathbb E [ x(T)])\bigg].
\end{split}
\end{equation}
  Therefore, we  can translate   Problem \ref{pro:1.1}
  into  the following  equivalent optimal control problem in
  its strong formulation, i.e., without changing the  reference
probability space $(\Omega, \mathscr F, \{\mathscr{F}_t\}_{0\leq t\leq T},\mathbb P),$
where $Z^u(\cdot)$ will be regarded as
an additional  state process besides the
state process $x^u(\cdot).$

\begin{pro}
\label{pro:1.2} Find an admissible control $\bar{u}(\cdot)$ such that
\begin{equation*}  \label{eq:b7}
J(\bar{u}(\cdot))=\displaystyle\inf_{u(\cdot)\in U_{ad}^W}J(u(\cdot)),
\end{equation*}
 subject to
 the cost functional \eqref{eq:1.7}
 and  the following
 state equation
\begin{equation}
\displaystyle\left\{
\begin{array}{lll}
dx(t) & = & [(b-{\tilde
g}h)(t,x(t),
\mathbb E[x(t)],u(t), \mathbb E [ u(t)])]dt
+\displaystyle
g(t,x(t),\mathbb E [x(t)],u(t), \mathbb E [ u(t)])dW(t)
 \\&&+\displaystyle {\tilde g}(t,x(t),\mathbb E[ x(t)],u(t), \mathbb E [ u(t)])dY(t),
 \\
dZ(t) & = & Z(t)h(t,x(t),\mathbb E [ x(t)],u(t), \mathbb E [ u(t)])dY(t), \\
\displaystyle Z(0) & = & 1,%
\\
\displaystyle x(0) & = & a\in \mathbb{R}^n.
\end{array}%
\right.  \label{eq:3.7}
\end{equation}
\end{pro}
Any $\bar{u}(\cdot)\in U_{ad}^W$ satisfying above is called an optimal
control process of Problem \ref{pro:1.2} and the corresponding state
process $(\bar{x}(\cdot),\bar{Z}(\cdot))$ is called the optimal
state process. Correspondingly $(\bar{u}(\cdot);\bar{x}(\cdot),  \bar {Z}(\cdot))$ is called an optimal pair of Problem \ref{pro:1.2}.
\begin{rmk}
 The present formulation
of the partially observed optimal control problem is quite similar
to a completely observed optimal control problem; the only
difference lies in the admissible class $U_{ad}^W$ of controls.
\end{rmk}
In this paper,  provided  the  original
sate equation   \eqref{eq:1.1} and
the observation equation  \eqref{eq:1.2},
 we will also study the partially observed  optimal control
problem  in its strong formulation, i.e.
without changing the  reference
probability space $(\Omega, \mathscr F, \{\mathscr{F}_t\}_{0\leq t\leq T},\mathbb P).$
Precisely,  different from
the  cost functional \eqref{eq:1.6},
  the cost functional in this case  is defined by
\begin{equation}\label{eq:1.10}
\begin{split}
J(u(\cdot ))=& \mathbb E\displaystyle\bigg[%
\int_{0}^{T}l(t,X(t),\mathbb E [ x(t)], u(t), \mathbb E [ u(t)])dt \\
& ~~~~~+m(X(T),\mathbb E [ x(T)])\bigg].
\end{split}
\end{equation}
Note that $\mathbb E(\cdot)$ is
the expectation with  the original
probability $\mathbb P$ independent
of the control $u(\cdot).$   In
this case,   different from the
partially observed  optimal control problem in weak sense
discussed before, we do not need
require the admissible
control process satisfies \eqref{eq:1.33}. In this case, an  admissible
control process is defined as a $\{\mathscr{F}^Y_t\}_{0\leq t\leq T}$
adapted stochastic process valued in $U$
satisfying
\begin{equation} \label{eq:1.33}
  \mathbb E\bigg[\int_0^T|u(t)|^2
dt\bigg]<\infty.
\end{equation}
The set of all admissible controls in this case
is denoted by $U_{ad}^S.$

Then we  can put forward the partially observed optimal control problem   in its strong formulation as follows.

\begin{pro}
\label{pro:4.1} Find an admissible control $\bar{u}(\cdot)$ such
that
\begin{equation*}  \label{eq:b7}
J(\bar{u}(\cdot))=\displaystyle\inf_{u(\cdot)\in U_{ad}^W}J(u(\cdot)),
\end{equation*}
\end{pro}
 subject to
 the cost functional \eqref{eq:1.10}
 and  the following
 state equation
\begin{equation} \label{eq:4.2}
\displaystyle\left\{
\begin{array}{lll}
dx(t) & = & [(b-{\tilde
g}h)(t,x(t),
\mathbb E[x(t)],u(t), \mathbb E [ u(t)])]dt
+\displaystyle
g(t,x(t),\mathbb E [x(t)],u(t), \mathbb E [ u(t)])dW(t)
 \\&&+\displaystyle {\tilde g}(t,x(t),\mathbb E[ x(t)],u(t), \mathbb E [ u(t)])dY(t),
\\
\displaystyle x(0) & = & a\in \mathbb{R}^n.%
\end{array}
\right.
\end{equation}
Note that  under Assumptions\ref{ass:1.1}
and \ref{ass:1.2},
for any admissible control $u(\cdot)\in U_{ad}^S,$
by Lemma \ref{lem:3.3} below,
the state \eqref{eq:4.2} has a unique solution
$x(\cdot)\in S_{\mathscr F}^2(0,T; \mathbb R^n)$
and $J(u(\cdot)) < \infty,$  so Problem \ref{pro:4.1} is well-defined.

Before concluding this subsection, we give  the
well-posedness of the state equation as well as some useful
estimates which can be showed easily  by
the classic compression mapping
theorem combining with Gronwall's inequality and B-D-G inequality.

\begin{lem}\label{lem:3.3}
  Let Assumption \ref{ass:1.1} holds. Then for any $u(\cdot)\in
  M_{\mathscr{F}}^\beta(0,T;L^2 (0,T; \mathbb R^k)),$  the state equation \eqref{eq:3.7} admits a unique strong
  solution $( x(\cdot),  Z(\cdot))\in S_{\mathscr{F}}^\beta (0,T;\mathbb R^{n+1}).$
Moreover, we have the following
estimates:

\begin{eqnarray}\label{eq:1.8}
\begin{split}
 {\mathbb E} \bigg [ \sup_{t\in \cal T} | x(t) |^\beta \bigg
] &\leq& K\bigg \{1+|a|^\beta+\mathbb E\bigg[\Big(\int_0^T |u(t)|^2dt\Big)^{\frac{\beta}{2}}\bigg] \bigg\},
\end{split}
\end{eqnarray}
and for any $\alpha \geq 2,$
\begin{eqnarray}\label{eq:1.9}
\begin{split}
 {\mathbb E} \bigg [ \sup_{{t\in
 \cal T}} | Z(t) |^\alpha \bigg
] \leq K.
\end{split}
\end{eqnarray}
Further, if $(\bar x(\cdot), \bar Z(\cdot))$ is the unique strong solution
corresponding to another  $ \bar u(\cdot)\in M_{\mathscr{F}}^\beta(0,T;L^2 (0,T; \mathbb R^k)),$ then the following
estimate holds
\begin{eqnarray}\label{eq:1.15}
{\mathbb E} \bigg [ \sup_{t\in \cal T} | x (t) - \bar x(t) |^\beta
\bigg ] +{\mathbb E} \bigg [ \sup_{t\in \cal T} | Z (t) - \bar Z (t) |^\beta
\bigg ]  \leq  K {\mathbb E} \bigg [ \int_0^T| u(t)- \bar u (t)
|^2 dt\bigg ]^{\frac{\beta}{2}}.
\end{eqnarray}

\end{lem}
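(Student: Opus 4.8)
\noindent The plan is to exploit the triangular structure of the system \eqref{eq:3.7}: the equation for $x$ does not involve $Z$, so I would first solve the mean-field SDE for $x$ on its own and only afterwards treat $Z$, which, once $x$ is frozen, obeys a \emph{linear} SDE; this ordering also sidesteps any trouble from the bilinear term $Zh$. By Assumption \ref{ass:1.1} the diffusion coefficients $g$ and $\tilde g$ are globally Lipschitz in $(x,y,u,v)$ with linear growth, whereas the drift $b-\tilde g h$ has linear growth but is only \emph{locally} Lipschitz --- being a product of the linearly growing Lipschitz map $\tilde g$ and the bounded Lipschitz map $h$, its local Lipschitz constant grows at most linearly. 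To deal with this I would truncate, replacing the arguments $x$ and $\mathbb E[x(t)]$ by their projections onto the centred ball of radius $N$; the truncated coefficients are globally Lipschitz, so the contraction mapping theorem applied on $S_{\mathscr F}^\beta(0,\delta;\mathbb R^n)$ for $\delta$ small and then patched over $[0,T]$ --- noting that the mean-field terms enter only through $|\mathbb E[\xi_1(s)]-\mathbb E[\xi_2(s)]|\le\mathbb E|\xi_1(s)-\xi_2(s)|$ --- produces a unique $x^N\in S_{\mathscr F}^\beta(0,T;\mathbb R^n)$.

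Next I would prove the a priori bound \eqref{eq:1.8} for $x^N$ \emph{uniformly in} $N$: applying It\^o's formula and the Burkholder--Davis--Gundy inequality to $|x^N(t)|^\beta$ (regularizing if $\beta<2$), using the linear growth of $b,g,\tilde g$ and the boundedness of $h$, and closing with Gronwall's inequality gives $\mathbb E[\sup_{t\in{\cal T}}|x^N(t)|^\beta]\le K\{1+|a|^\beta+\mathbb E[(\int_0^T|u(s)|^2\,ds)^{\beta/2}]\}$, the last term stemming from the linear growth in $u$ and $\mathbb E[u]$. Since this bound is uniform in $N$, a standard localization argument (stop at $\tau_N=\inf\{t:|x^N(t)|\ge N\}$, which tends to $T$ a.s.) shows that $x^N$ stabilizes and that its limit $x$ solves the $x$-equation, lies in $S_{\mathscr F}^\beta(0,T;\mathbb R^n)$, and satisfies \eqref{eq:1.8}; uniqueness follows from the same localization together with a Gronwall estimate for the difference of two solutions, the random local Lipschitz constants being controlled along the solutions by \eqref{eq:1.8}.

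With $x$ fixed, $\psi(t):=h(t,x(t),\mathbb E[x(t)],u(t),\mathbb E[u(t)])$ is a bounded $\{\mathscr F_t\}$-adapted process, so $Z$ solves the linear SDE $dZ(t)=Z(t)\psi(t)\,dY(t)$, $Z(0)=1$, whose unique solution is the Dol\'eans--Dade exponential $Z(t)=\exp\{\int_0^t\psi(s)\,dY(s)-\tfrac12\int_0^t\psi(s)^2\,ds\}$. Since $\psi$ is bounded, Novikov's condition and Doob's maximal inequality yield $\mathbb E[\sup_{t\in{\cal T}}|Z(t)|^\alpha]\le K$ for every $\alpha\ge2$, which is \eqref{eq:1.9}; in particular $(x,Z)\in S_{\mathscr F}^\beta(0,T;\mathbb R^{n+1})$, completing the well-posedness part.

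Finally, for \eqref{eq:1.15} let $(x,Z)$ and $(\bar x,\bar Z)$ be the solutions associated with $u$ and $\bar u$. Subtracting the $x$-equations, applying It\^o to $|x(t)-\bar x(t)|^2$, using the Lipschitz continuity of $b-\tilde g h$, $g$ and $\tilde g$ (with local Lipschitz constants controlled along the two solutions via \eqref{eq:1.8}), together with the Burkholder--Davis--Gundy and Gronwall inequalities, gives $\mathbb E[\sup_{t\in{\cal T}}|x(t)-\bar x(t)|^\beta]\le K\,\mathbb E[(\int_0^T|u(s)-\bar u(s)|^2\,ds)^{\beta/2}]$. Then, writing $d(Z(t)-\bar Z(t))=(Z(t)-\bar Z(t))\psi(t)\,dY(t)+\bar Z(t)\big(\psi(t)-\bar\psi(t)\big)\,dY(t)$ with $\psi-\bar\psi$ a Lipschitz, bounded increment of $h$ in $(x,\mathbb E[x])$, and using the moment bound on $\bar Z$ from \eqref{eq:1.9}, another Burkholder--Davis--Gundy/Gronwall round gives $\mathbb E[\sup_{t\in{\cal T}}|Z(t)-\bar Z(t)|^\beta]\le K\big(\mathbb E[\sup_{t\in{\cal T}}|x(t)-\bar x(t)|^\beta]+\mathbb E[(\int_0^T|u-\bar u|^2\,ds)^{\beta/2}]\big)$; combining the two estimates yields \eqref{eq:1.15}. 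I expect the only genuinely non-routine point to be the failure of $b-\tilde g h$ to be globally Lipschitz (its local Lipschitz constant growing linearly because $\tilde g$ does): the truncation in the existence step and the uniform a priori moment bounds \eqref{eq:1.8}--\eqref{eq:1.9} in the uniqueness and stability steps are exactly what make the otherwise standard fixed-point, BDG and Gronwall arguments go through.
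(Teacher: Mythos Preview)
Your proposal is correct and is precisely an elaboration of what the paper itself does: the paper does \emph{not} give a proof of this lemma, stating only that it ``can be showed easily by the classic compression mapping theorem combining with Gronwall's inequality and B-D-G inequality.'' Your outline --- the triangular decomposition (solve the mean-field SDE for $x$ first, then treat $Z$ as a Dol\'eans--Dade exponential with bounded integrand), the contraction argument, the a priori bounds via It\^o/BDG/Gronwall, and the stability estimate by subtracting and repeating BDG/Gronwall --- is exactly the standard route the paper is alluding to.

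You are in fact more careful than the paper on the one genuinely non-routine point: the drift $b-\tilde g h$ is only locally Lipschitz in $(x,y)$ (its derivative contains $\tilde g\, h_x$, which grows linearly because $\tilde g$ does), and your truncation-plus-localization device, together with the uniform a priori bound \eqref{eq:1.8}, is the right way to close the fixed-point and stability arguments. One small caution for the $Z$-stability step: when you bound $\mathbb E\big[\big(\int_0^T \bar Z^2|\psi-\bar\psi|^2\,ds\big)^{\beta/2}\big]$ via H\"older using the moment bound \eqref{eq:1.9} on $\bar Z$, you will momentarily need $2\beta$-moments of $\bar Z$ (which is fine, since \eqref{eq:1.9} holds for all $\alpha\ge2$) and a slightly higher moment of $x-\bar x$ than $\beta$; this is harmless here because \eqref{eq:1.8}--\eqref{eq:1.15} for $x$ can be established for any exponent once $u,\bar u\in M_{\mathscr F}^{\beta'}(0,T;L^2)$ with $\beta'$ large enough, but it is worth flagging so the constants close.
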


\subsection {Related Development and Contributions of
this paper }

Most recently,
 stochastic optimal control  problems
 of stochastic differential equations (SDE) of
 mean-field type have attracted  a great deal of  attention due to  its wide range of applications in economics and finance  such as mean-variance portfolio selection problems.
 As stated by Djehiche and Tembine (2016), the main feature of this class of control problem is that the cost functional,
 the coefficients of the drift and diffusion terms of the state equation depend not only on the state and the control,  but also on their probability distribution. The presence of  the mean-field term makes the control problem become to be  time-inconsistent so that the dynamic programming principle (DPP)
  does not work, which motivates to establish
   the stochastic maximum  principle (SMP) to
    solve this type of optimal control problems instead of trying extensions of DPP.
    It is well-known that adjoint equations play a critical role in the formulation of the stochastic maximum  principle.
Intuitively speaking, the adjoint equation of a controlled state equation of mean-field type is a backward
stochastic differential equation (BSDE) of mean -field type. So it is not until Buckdahn et al (2009a, 2009b)
established the results  on the mean-field BSDEs that the stochastic
maximum principle and related
 theoretical
 result and application  for the optimal control system of mean-field
type has become an important and popular  topic. we refer to interested readers to
Andersson and Djehiche (2011), Buckdahn et al (2011),
Li (2012),  Meyer-Brandis et al (2012),
Shen
and Siu (2013), Du et al(2013), Elliott (2013),
Hafayed (2013), Yong(2013),
Chala (2014), Shen et al(2014), Meng and Shen(2015)
 and  the reference therein for the various optimal control
 theory results on the mean-field models with
full observation.

A great of results on stochastic
optimal control
without mean-field  term  under   partial
observation  or partial  information
have  been obtained by  many authors
for various types of stochastic systems
via establishing the corresponding   MP and DPP. See e.g., Bensoussan (1983), Tang (1998), Baghery et al. (2007), Wu (2010), Wang and  Wu (2009),
Wang et al (2013, 2015a), and the reference therein for
more detailed discussion.

The purpose of  this paper is an
extension to the optimal control of
stochastic diffusion of
mean-field type under partial
observation (see Problem \ref{pro:1.1},
\ref{pro:4.1}).  Along this topic, due to
 the theoretical and practical
interest, recently,
it become more popular, e.g,
Wang et al (2014a, 2014b, 2015b, 2016),
 Djehiche and
 Tembine (2016), Ma and Liu (2017),
where the corresponding
maximum principles
are established and  practical
finance applications are illustrated.
Different from the above mentioned  references,
for our optimal control problem of mean-field type,
there are some distinctive  features
and contribution  worthy of
being emphasizing.
First,  our state system is a stochastic
 nonlinear system where the observation noise
$W^u(\cdot)$ is allowed to enter into
our state equation   and the observation
coefficients may depend  not only on the
control process,  but also on  its probability distribution. Therefore, our model is
more general and complicated, which leads
to that our adjoint equation is more different
and  the derivation of our main result
need more skills required.  Second,
for  Problem \ref{pro:1.1} in weak formulation, under the standard assumption on the
coefficients in which  case
the linear quadratic optimal control problem is
included,  the required integral condition for
our admissible control $u(\cdot)$  is
 \begin{equation} \label{eq:1.16}
  \mathbb E\bigg[\bigg(\int_0^T|u(t)|^2
dt\bigg)^{2}\bigg]<\infty,
\end{equation}
which is more weaker than that in
the existed reference,(cf., for example,
see,  Wang et al (2014a, 2014b, 2016 ))
where the required integral condition
for their admissible control $u(\cdot)$ is

\begin{equation} \label{eq:1.17}
  \sup_{t\in \cal T}\mathbb E\bigg[|u(t)|^8
dt\bigg]<\infty.
\end{equation}
When we require all the coefficients involved in the state equation and the cost functional
are bounded (see, for example, Djehiche and
 Tembine (2016)), the
integral condition can be weakened to the following
\begin{equation} \label{eq:1.18}
  \mathbb E\bigg[\int_0^T|u(t)|^2
dt\bigg]<\infty,
\end{equation}
but in this case,
the classic LQ problem are
not included.  Under
\eqref{eq:1.16}, our main result on
the stochastic maximum principle
 can be obtained based on
 the refined estimate \eqref{eq:1.8}-
 \eqref{eq:1.15} for
 the state equation.
 Note that for Problem \ref{pro:4.1} in
 the strong formulation, we need only
 require that  admissible control
 satisfied \eqref{eq:1.18} because
 the stochastic process $Z(\cdot)$  (see
 \eqref{eq:3.4}) is
 not involved in the cost functional (see, for example, Wang et al (2015a), Ma and Liu (2017)).
 Third, the main contribution of this  paper
 is that the corresponding
 maximum principle for the partial
 observed optimal control
 is established under our
 stochastic model of mean-field type
 by establishing a variation formula of
 the cost functional. The main idea is to get directly a variation formula in terms of the
Hamiltonian and the associated adjoint system which is a linear backward stochastic
differential equation of mean-field
and neither the
variational equation nor the corresponding Taylor type expansions of the cost functional and the state process will be  introduced.  As an application,
the LQ problem of mean-field type under
partial observation is
illustrated  and solved by the  stochastic maximum principle.  This paper can be regarded as an addition to the study of partially
 observed stochastic optimal control
 problems of mean-field type.

 The rest of this paper is organized as
 follows. In section 2,  the necessary maximum principle
 in a weak formulation is established by convex variation and adjoint calculation.  Section 3
 is devoted to deriving necessary as well as sufficient optimality
conditions for Problem \ref{pro:4.1}
in a strong formulation  in the form of stochastic maximum principles  in a unified
way.  As an application,
a partially observed   LQ problem of mean-field type   is
studied detailed and the corresponding
dual characterization  and sate feed-back
presentation of the optimal control  are obtained by
 the stochastic maximum principles established
 in section 3
  and  the classic  technique of completing squares, respectively.

\section{Stochastic Maximum Principle in Weak Formulation }

\noindent

  This section is devoted to establishing
   the stochastic maximum principle of
   Problem \ref{pro:1.1} or   Problem \ref{pro:1.2},
   i.e., establishing the necessary optimality condition of Pontryagin's type for
   an admissible control to be optimal. To this end,
for the state equation \eqref{eq:3.7},
we first introduce the
corresponding adjoint equation.
Actually, define
the Hamiltonian function $H:[0,T]\times \Omega \times
\mathbb{R}^n \times \mathbb{R}^n \times U \times U
\times \mathbb{R}^n \times \mathbb{R}^n
\times \mathbb{R}^n \times \mathbb R
\longrightarrow \mathbb{R}$ by
\begin{eqnarray} \label{eq:2.1}
\begin{split}
  &H(t,x,y,u,v,p,q, \tilde {q}, \tilde R)
\\&=\langle p,  b(t,x,y,u,v)\rangle +\langle q,  g(t,x,y,u,v)\rangle
+\langle  \tilde q, \tilde g(t,x,y,u,v)\rangle+ \tilde Rh(t,x,y, u,v)+l(t,x,y,u,v).
\end{split}
\end{eqnarray}
For the state equation \eqref{eq:3.7}
associated with any given admissible pair
$(\bar u(\cdot), \bar x(\cdot), \bar Z(\cdot)),$ the corresponding
adjoint equation is defined as follows:

\begin{numcases}{}\label{eq:3.10}
\begin{split}
d\bar r(t)&=-l(t,\bar x(t), \mathbb E [\bar x(t)], \bar u(t), \mathbb E [\bar u(t)])dt+
\bar R\left(
t\right)  dW\left(  t\right) +\bar {\tilde R}\left( t\right) dW^{\bar
u}\left( t\right),
\\
d\bar p\left(  t\right)  &=-\Big\{{H}_{x}\left(  t,\bar x(t), \mathbb E [\bar
x(t)], \bar u(t), \mathbb E [\bar u(t)]\right) + \frac{1}{\bar Z(t)}\mathbb E^{\bar u}\big[{H}_{y}\left( t,\bar x(t), \mathbb
E [\bar x(t)], \bar u(t), \mathbb E [ \bar u(t)]\right)\big]  \Big\}dt
\\&~~~~~~+\bar q
\left(  t\right)  dW\left(  t\right)  +\bar{\tilde q}\left(  t\right)
dW^{\bar u}\left( t\right),\\
\bar r(T)&=m(\bar x(T),\mathbb E[\bar x (T)]),
\\ \bar p(T)&=m_x(\bar x(T),\mathbb E[\bar x (T)])+\frac{1}{\bar Z(T)}\mathbb E^{\bar u} \left[m_x(\bar x(T),\mathbb E [\bar x(T)])\right],
\end{split}
\end{numcases}
where
\begin{equation} \label{eq:3.11}
  H(t,x,y,u,v)=:{ H}(t,x, y,u,v, \bar p(t),
  \bar q(t), \bar {\tilde q}(t),\bar{\tilde R}(t)-\tilde g(t,\bar x(t), \mathbb E
[\bar x(t)], \bar u(t), \mathbb E [\bar u(t)] )^\top \bar p(t)).
\end{equation}
Note the adjoint equation \eqref{eq:3.10}
is a mean-field backward
stochastic differential equation whose solution
consists of  an 6-tuple process $(\bar p(\cdot),\bar q(\cdot),\bar {\tilde q}(\cdot),\bar r(\cdot),\bar R(\cdot),\bar {\tilde R}(\cdot ) ).$
Under Assumptions \ref{ass:1.1} and
\ref{ass:1.2},  by Buckdahn (2009b),
it is easily to see that  the adjoint equation \eqref{eq:3.10} admits
a unique solution $(\bar p(\cdot),\bar q(\cdot),\bar {\tilde q}(\cdot),\bar r(\cdot),\bar R(\cdot),\bar {\tilde R}(\cdot) )\in S_{\mathscr{F}}^2(0,T;
\mathbb R^n)\times M_{\mathscr{F}}^2(0,T;
\mathbb R^n) \times M_{\mathscr{F}}^2(0,T;
\mathbb R^n)\times S_{\mathscr{F}}^2(0,T;
\mathbb R)\times M_{\mathscr{F}}^2(0,T;
\mathbb R) \times M_{\mathscr{F}}^2(0,T;
\mathbb R),
$  also called the adjoint process corresponding
the admissible pair $(\bar{u}(\cdot ); \bar{x}(\cdot ), \bar Z(\cdot))$.

Now we are in a position to
state our main result:  stochastic
maximum principle of  Problem \ref{pro:1.1}  or \ref{pro:1.2}.
\begin{thm} \label{thm:3.4}
Let assumptions \ref{ass:1.1} and \ref{ass:1.2} be satisfied. Let $(\bar{u}(\cdot ); \bar{x}(\cdot ), \bar Z(\cdot))$ be an optimal pair of Problem \ref{pro:1.2} associated with the adjoint
process  $(\bar p(\cdot),\bar q(\cdot),\bar {\tilde q}(\cdot),\bar r(\cdot),\bar R(\cdot),\bar {\tilde R}(\cdot) ).$
 Then
the optimality condition
\begin{eqnarray}\label{eq:3.2000}
\begin{split}
  &\Big\langle\mathbb E\big[ \bar Z(t){\bar H}_u(t)|\mathscr F_t^Y]
+\mathbb E^{\bar{u}}[{ \bar H}_v(t)], u-\bar u(t) \Big\rangle\geq 0
\end{split}
\end{eqnarray}
holds for any $u\in U$  and a.e. $(t,\omega )\in \lbrack 0,T]\times \Omega .$ Here
using the notation \eqref{eq:3.11}, we set
\begin{equation} \label{eq:3.13}
  \bar H_u(t)={ H}_u(t, x(t), \mathbb E[\bar x(t)],
  u(t),\mathbb E[u(t)]),
\end{equation}
and
\begin{equation}\label{eq:3.14}
  \bar H_v(t)={ H}_v(t, x(t), \mathbb E[\bar x(t)],
  u(t),\mathbb E[u(t)]).
\end{equation}
\end{thm}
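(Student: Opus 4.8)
The plan is to combine a convex perturbation of the optimal control with a duality computation against the adjoint system, and then to localise. Fix an optimal pair $(\bar u(\cdot);\bar x(\cdot),\bar Z(\cdot))$ of Problem \ref{pro:1.2} and an arbitrary $u(\cdot)\in U_{ad}^W$; put $v(\cdot)=u(\cdot)-\bar u(\cdot)$. Since $U$ is convex and the constraints defining $U_{ad}^W$ ($\mathscr F^Y_t$-adaptedness and \eqref{eq:1.33}) are stable under convex combinations, $u^\theta(\cdot):=\bar u(\cdot)+\theta v(\cdot)\in U_{ad}^W$ for each $\theta\in(0,1]$; let $(x^\theta(\cdot),Z^\theta(\cdot))$ be the corresponding solution of \eqref{eq:3.7}. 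By Lemma \ref{lem:3.3} with $\beta=4$ (legitimate precisely because of the power-two integrability in \eqref{eq:1.33}), the variations $\delta x^\theta:=x^\theta-\bar x$ and $\delta Z^\theta:=Z^\theta-\bar Z$ satisfy $\|\delta x^\theta\|_{S^4_{\mathscr F}}+\|\delta Z^\theta\|_{S^4_{\mathscr F}}=O(\theta)$. As $J(u^\theta)\ge J(\bar u)$ for all such $\theta$, it suffices to prove
\begin{equation*}
\lim_{\theta\downarrow0}\frac{J(u^\theta)-J(\bar u)}{\theta}=\mathbb E\!\left[\int_0^T\Big(\big\langle\bar Z(t)\bar H_u(t),v(t)\big\rangle+\big\langle\mathbb E^{\bar u}[\bar H_v(t)],v(t)\big\rangle\Big)\,dt\right],
\end{equation*}
where $\bar H_u,\bar H_v$ are as in \eqref{eq:3.13}--\eqref{eq:3.14}.

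The key point --- following the paper's announced strategy --- is to obtain this derivative \emph{directly}, without introducing the variational (linearised) equation. I would work on $(\Omega,\mathscr F,\mathbb P)$ with $J$ in the form \eqref{eq:1.7} and the state system \eqref{eq:3.7}, and first rewrite the adjoint system \eqref{eq:3.10} under $\mathbb P$ via $dW^{\bar u}(t)=dY(t)-h(t,\bar x(t),\mathbb E[\bar x(t)],\bar u(t),\mathbb E[\bar u(t)])\,dt$ (with $\mathbb E^{\bar u}[\cdot]=\mathbb E[\bar Z(T)\,\cdot\,]$, so $\mathbb E^{\bar u}[\zeta]=\mathbb E[\bar Z(t)\zeta]$ for $\mathscr F_t$-measurable $\zeta$). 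Subtracting the state equations for $u^\theta$ and $\bar u$, the pair $(\delta x^\theta,\delta Z^\theta)$ solves a linear SDE whose coefficients are the \emph{increments} of $b,g,\tilde g,h$. Applying It\^o's formula to $t\mapsto\langle\bar Z(t)\bar p(t),\delta x^\theta(t)\rangle+\bar r(t)\,\delta Z^\theta(t)$ on $[0,T]$ and taking expectation: the terminal conditions in \eqref{eq:3.10} reproduce exactly the first-order part of the terminal term of $J$; the drift contributions $-H_x$, $-\frac{1}{\bar Z}\mathbb E^{\bar u}[H_y]$, $-l$ of $\bar p,\bar r$ together with the quadratic-covariation terms between $\bar Z\bar p$, $\bar r$ and $\delta x^\theta$, $\delta Z^\theta$ cancel all the $\delta x^\theta$-, $\mathbb E[\delta x^\theta]$- and $\delta Z^\theta$-contributions produced by a first-order Taylor expansion of each coefficient increment; and the shift $\bar{\tilde R}(t)-\tilde g(t,\bar x(t),\mathbb E[\bar x(t)],\bar u(t),\mathbb E[\bar u(t)])^\top\bar p(t)$ in the last slot of $H$ in \eqref{eq:3.11} is precisely the bookkeeping device that makes those covariation terms match the $g$-, $\tilde g$- and $h$-parts of $H_x$. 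What remains is the $\theta$-linear part involving $H_u$ (weighted by $\bar Z$) and $H_v$ (averaged under $\mathbb P^{\bar u}$, since its argument $\mathbb E[v(t)]$ is deterministic), i.e. the right-hand side above, up to an $o(\theta)$ error.

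I expect the main obstacle to be the control of this $o(\theta)$ remainder. Each discarded term is a product of an adjoint process, a uniformly bounded derivative-type factor of the form $\varphi_x(\text{intermediate point})-\varphi_x(t,\bar x(t),\dots)$ (with $\varphi\in\{b,g,\tilde g,h,l,m\}$) which tends to $0$ a.e. as $\theta\downarrow0$, and a factor $\delta x^\theta$, $\delta Z^\theta$ or $\theta v$. By H\"older's inequality, the a priori bounds \eqref{eq:1.8}--\eqref{eq:1.9}, the exponential integrability of $\bar Z$ and $\bar Z^{-1}$ (hence the higher integrability of the adjoint processes, using the boundedness of $h,b_x,g_x,\tilde g_x,h_x$ in Assumption \ref{ass:1.1}), the estimate \eqref{eq:1.15} ($\|\delta x^\theta\|_{M^2_{\mathscr F}}+\|\delta Z^\theta\|_{M^2_{\mathscr F}}=O(\theta)$), and dominated convergence, the expectation of every such term is $o(\theta)$. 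A secondary point of care is the consistent handling, throughout the expansion, of the change of measure (so that $W^{\bar u}$ is not a $\mathbb P$-Brownian motion) and of the frozen mean-field terms $\mathbb E[\bar x(t)]$, $\mathbb E[\bar u(t)]$.

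It remains to localise. Dividing the variational identity by $\theta$ and letting $\theta\downarrow0$, and using that $v(t)=u(t)-\bar u(t)$ is $\mathscr F^Y_t$-measurable to replace $\bar Z(t)\bar H_u(t)$ by $\mathbb E[\bar Z(t)\bar H_u(t)\mid\mathscr F^Y_t]$ (and noting that $\mathbb E^{\bar u}[\bar H_v(t)]$ is deterministic), we obtain
\begin{equation*}
\mathbb E\!\left[\int_0^T\Big\langle\mathbb E\big[\bar Z(t)\bar H_u(t)\mid\mathscr F^Y_t\big]+\mathbb E^{\bar u}[\bar H_v(t)],\,u(t)-\bar u(t)\Big\rangle\,dt\right]\ge0
\end{equation*}
for every $u(\cdot)\in U_{ad}^W$. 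Now fix $u_0\in U$, $t\in[0,T)$, $\varepsilon>0$ small and $A\in\mathscr F^Y_t$, and apply this inequality with the perturbation $v(s)=(u_0-\bar u(s))\mathbf{1}_A\mathbf{1}_{[t,t+\varepsilon]}(s)$ (for which $u^\theta$ is admissible, being a convex combination of $\bar u$ and $u_0$ on $A\times[t,t+\varepsilon]$ and equal to $\bar u$ elsewhere); dividing by $\varepsilon$, letting $\varepsilon\downarrow0$ by Lebesgue's differentiation theorem, and using the arbitrariness of $A\in\mathscr F^Y_t$, one obtains \eqref{eq:3.2000} for the fixed $u_0$ and a.e. $(t,\omega)$. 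Since $U$ is separable, the exceptional null set can be chosen independently of $u_0$, which yields \eqref{eq:3.2000} for all $u\in U$ and a.e. $(t,\omega)\in[0,T]\times\Omega$.
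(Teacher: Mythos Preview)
Your proposal is correct and follows essentially the same route as the paper: convex perturbation $u^\theta=\bar u+\theta(u-\bar u)$, an exact duality identity via It\^o's formula against the adjoint pair $(\bar p,\bar r)$, remainder control to obtain the directional derivative of $J$ (the paper packages this as Lemma~\ref{lem:3.2}, Lemma~\ref{lem:2.3} and Theorem~\ref{them:3.7}), and then passage from the integral inequality to the pointwise one via conditioning on $\mathscr F^Y_t$. The one organisational difference worth noting is in the It\^o step: the paper applies It\^o to $\langle\bar p,\delta x^\theta\rangle$ and takes expectation under $\mathbb P^{\bar u}$ (where $W^{\bar u}$ is a genuine Brownian motion, so the stochastic integrals vanish cleanly), and \emph{separately} applies It\^o to $\bar r\,\delta Z^\theta$ under $\mathbb P$, rather than bundling both into $\langle\bar Z\bar p,\delta x^\theta\rangle+\bar r\,\delta Z^\theta$ under $\mathbb P$ as you propose. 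The two computations are equivalent, but the paper's measure-splitting avoids having to work out the dynamics of the product $\bar Z\bar p$ and the associated cross-variation bookkeeping; conversely, your version keeps everything on one probability space. Your localisation argument (indicator perturbations over $A\times[t,t+\varepsilon]$ with $A\in\mathscr F^Y_t$, then Lebesgue differentiation and separability of $U$) is more explicit than the paper's, which simply asserts that the integral variational inequality implies \eqref{eq:3.2000}.
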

To prove this theorem, we  first need   to
establish the variation  formula for
the cost functional \eqref{eq:1.6} or
\eqref{eq:1.7}  by   the classical  convex variation
method  and dual technique.

Since the control domain $U$ is convex, for any given admissible
control $\bar u (\cdot), u(\cdot)\in U_{ad}^W$, the following
perturbed control process $u^\epsilon (\cdot)$:
\begin{eqnarray} \label{eq:2.7}
\begin{split}
u^\epsilon (\cdot) = \bar u (\cdot) + \epsilon ( u (\cdot) - \bar u
(\cdot) ) \ , \quad  0 \leq \epsilon \leq 1 \ ,
\end{split}
\end{eqnarray}
is also an element of $U_{ad}^W$. We denote by $(x^\epsilon (\cdot),Z^\epsilon (\cdot)) $ the solution to
 the sate equation  \eqref{eq:3.7}
  corresponding to
$u^\epsilon (\cdot)$.  To unburden our notation, we will use the
following abbreviations:
\begin{eqnarray}\label{eq:3.16}
\left\{
\begin{aligned}
& m^\epsilon (T) = m( x^\epsilon (T), \mathbb E
[x^\epsilon (T)] ) \ ,   \bar m(T) =  m( \bar x (T), \mathbb E [\bar x  (T)] ) \ ,\\
& \phi^\epsilon (t) = \phi ( t, x^\epsilon (t), \mathbb E [x^\epsilon
(t)],u^\epsilon (t), \mathbb E[u^\epsilon (t)] ) \ , \quad \phi =  b, g,
\tilde g, h,l\ ,\\
& \bar \phi (t) = \phi( t, \bar x (t), \mathbb E [\bar x(t)] , \bar
u(t), \mathbb E [ u(t)] ) \ ,\quad \phi =  b, g, \tilde g, h,l,\
\\
& H^\epsilon (t) = H ( t, x^\epsilon (t), \mathbb E [x^\epsilon
(t)],u^\epsilon (t), \mathbb E[u^\epsilon (t)] ),\\
& \bar H (t) = H( t, \bar x (t), \mathbb E [\bar x(t)] , \bar
u(t), \mathbb E [ u(t)] ), 
\\
& \bar H_x (t) = H_x( t, \bar x (t), \mathbb E [\bar x(t)] , \bar
u(t), \mathbb E [ u(t)] ), \
\\
& \bar H_y (t) = H_y( t, \bar x (t), \mathbb E [\bar x(t)] , \bar
u(t), \mathbb E [ u(t)] ) \
.
\end{aligned}
\right.
\end{eqnarray}

To
establish the variation  formula for
the cost function \eqref{eq:1.6} or
\eqref{eq:1.7}, we need the following
two basic  Lemmas.
\begin{lem} \label{lem:3.2}
Let  Assumptions \ref{ass:1.1} and
 \ref{ass:1.2}  be satisfied. Then  for
any $2\leq \gamma \leq 4,$ we have
\begin{eqnarray}
&&{\mathbb E} \bigg [ \sup_{t\in \cal T} | x^\epsilon (t) - \bar
x (t) |^\gamma  \bigg ]+{\mathbb E} \bigg [ \sup_{t\in \cal T} | Z^\epsilon (t) -
\bar Z (t) |^\gamma \bigg ]= O (\epsilon^\gamma) \ .
\end{eqnarray}
\end{lem}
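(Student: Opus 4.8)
The plan is to obtain Lemma \ref{lem:3.2} as an immediate corollary of the stability estimate \eqref{eq:1.15} in Lemma \ref{lem:3.3}. That estimate already bounds the sup-norm difference of the state pairs $(x(\cdot),Z(\cdot))$ attached to two admissible controls by the $L^2$-distance of the controls, so the only thing left to do is to check that this distance, evaluated at the perturbed control $u^\epsilon(\cdot)$, is genuinely of order $\epsilon$; this is precisely the place where the fourth-order integrability condition \eqref{eq:1.16} built into the definition of $U_{ad}^W$ is used.

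First I would recall that the convex perturbation $u^\epsilon(\cdot)=\bar u(\cdot)+\epsilon(u(\cdot)-\bar u(\cdot))$ lies in $U_{ad}^W$ (this was noted right after \eqref{eq:2.7}), and that $U_{ad}^W\subset M_{\mathscr F}^4(0,T;L^2(0,T;\mathbb R^k))\subset M_{\mathscr F}^\gamma(0,T;L^2(0,T;\mathbb R^k))$ for every $\gamma\in[2,4]$, the last inclusion holding by Jensen's inequality on the finite measure space ${\cal T}\times\Omega$. Hence Lemma \ref{lem:3.3} applies to the pair $(u^\epsilon(\cdot),\bar u(\cdot))$ with $\beta=\gamma$, and gives
\begin{equation*}
{\mathbb E}\Big[\sup_{t\in{\cal T}}|x^\epsilon(t)-\bar x(t)|^\gamma\Big]+{\mathbb E}\Big[\sup_{t\in{\cal T}}|Z^\epsilon(t)-\bar Z(t)|^\gamma\Big]\le K\,{\mathbb E}\Big[\int_0^T|u^\epsilon(t)-\bar u(t)|^2\,dt\Big]^{\gamma/2}.
\end{equation*}

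Next I would substitute $u^\epsilon(t)-\bar u(t)=\epsilon(u(t)-\bar u(t))$ to pull the factor $\epsilon^\gamma$ outside the expectation, so that the claim reduces to showing that ${\mathbb E}\big[(\int_0^T|u(t)-\bar u(t)|^2\,dt)^{\gamma/2}\big]$ is a finite constant independent of $\epsilon$. Since $\gamma/2\le 2$, Lyapunov's inequality bounds this quantity by $\big({\mathbb E}[(\int_0^T|u-\bar u|^2\,dt)^2]\big)^{\gamma/4}$, and the inner expectation is finite because $|u-\bar u|^2\le 2|u|^2+2|\bar u|^2$ together with Minkowski's inequality in $L^2(\Omega)$ applied to $\int_0^T|u|^2\,dt$ and $\int_0^T|\bar u|^2\,dt$ reduces it to the two finite quantities supplied by \eqref{eq:1.16}. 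Combining the three displays yields the asserted bound $O(\epsilon^\gamma)$.

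I do not expect any genuinely hard step here: the lemma is a corollary of the a priori estimate \eqref{eq:1.15}. The only point deserving a little care --- and the reason the range in the statement is exactly $2\le\gamma\le 4$ --- is the finiteness just discussed, which forces $\gamma/2\le 2$ and thereby matches the fourth-order integrability required of admissible controls in the weak formulation. Should a self-contained argument be preferred, the same bound can instead be obtained by rerunning the contraction mapping / B-D-G / Gronwall scheme behind Lemma \ref{lem:3.3} directly on the differences $x^\epsilon-\bar x$ and $Z^\epsilon-\bar Z$, using the uniform bounds on $b-\tilde g h$, $g$, $\tilde g$, $h$ and their first derivatives from Assumption \ref{ass:1.1}; but invoking \eqref{eq:1.15} is considerably shorter.
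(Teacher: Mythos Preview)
Your proposal is correct and follows exactly the paper's approach: apply the stability estimate \eqref{eq:1.15} of Lemma~\ref{lem:3.3} with $\beta=\gamma$ to the pair $(u^\epsilon(\cdot),\bar u(\cdot))$ and then factor out $\epsilon^\gamma$ from $u^\epsilon-\bar u=\epsilon(u-\bar u)$. If anything, your write-up is more careful than the paper's, since you spell out why the residual constant ${\mathbb E}\big[(\int_0^T|u-\bar u|^2\,dt)^{\gamma/2}\big]$ is finite via the fourth-order integrability in $U_{ad}^W$ --- which also explains the restriction $\gamma\le 4$ --- whereas the paper simply asserts $O(\epsilon^\gamma)$ at that point.
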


\begin{proof}
By the estimate  \eqref{eq:1.15}  in  Lemma \ref{lem:3.3}  and the definition
of $u^\eps(\cdot)$ ( see \eqref{eq:2.7}), we have
\begin{eqnarray}
\begin{split}
&{\mathbb E} \bigg [ \sup_{t\in \cal T} | x^\epsilon (t) - \bar
x (t) |^\gamma  \bigg ]+{\mathbb E} \bigg [ \sup_{t\in \cal T} | Z^\epsilon (t) -
\bar Z (t) |^\gamma \bigg ]
\\ &\leq K {\mathbb E} \bigg [ \int_0^T | u^\epsilon (t) - \bar u (t) |^2 d t \bigg ]^{\frac{\gamma}{2}}\\
&= K \epsilon^\gamma {\mathbb E} \bigg [ \int_0^T | u (t) - \bar u (t) |^2 d t \bigg ]^{\frac{\gamma}{2}}\\
&= O (\epsilon^\gamma) \ .
\end{split}
\end{eqnarray}
The proof is complete.
\end{proof}

Next we represent the difference $J (u^\epsilon (\cdot)) - J (\bar u
(\cdot))$ in terms of the Hamiltonian ${ H}$ and the adjoint process $(\bar p(\cdot),\bar q(\cdot),\bar {\tilde q}(\cdot),\bar r(\cdot),\bar R(\cdot),
\bar {\tilde R}(\cdot) $ as well as other relevant
expressions.
\begin{lem}\label{lem:2.3}
Let Assumptions \ref{ass:1.1} and \ref{ass:1.2} be satisfied.
Using the notations \eqref{eq:3.11} and
\eqref{eq:3.16},  we have
\begin{eqnarray}\label{eq10}
\begin{split}
&J (u^\epsilon (\cdot)) - J (\bar u(\cdot))
\\ =& {\mathbb E} ^{\bar u}
\bigg [ \int_0^T  \big ({ H}^\eps(t) - { \bar H} (t)
- \big < x^\epsilon (t) - \bar
x (t), { \bar H}_x (t) + \frac{1}{\bar Z(t)}
{\mathbb E}^{\bar{u}} [{ \bar H} _y(t)\big]
\big>  dt \bigg]
\\-&
{\mathbb E}^{\bar u}\bigg [ \int_0^T  \langle (\tilde g^\varepsilon(t)-\bar{\tilde g}(t))(h^\eps(t)-\bar
 h(t)), \bar p(t)\rangle dt\bigg]+\mathbb E \bigg[\int_0^T(Z^\eps(t)-\bar
Z(t))(l^\eps(t)-\bar l(t))dt\bigg]
\\
+&\mathbb E\bigg[\int_0^T
 \bar {\tilde R}(t)(Z^\eps(t)-Z^{\bar u}(t))(h^\eps(t)-\bar
 h(t))dt\bigg]+\mathbb E \bigg[  ( Z^\epsilon (T) - \bar Z(T)) (m^\epsilon (T) -
\bar m (T))\bigg]
\\+&{\mathbb E}^u \bigg [ m^\epsilon (T) - \bar m (T) - \left <
x^\epsilon (T) - \bar x (T), \bar m_x (T)
 + \frac{1}{\bar Z(T)}{\mathbb E}^{\bar u}  [\bar
m_{ x} (T)] \right
> \bigg ],
\end{split}
\end{eqnarray}
for any $u (\cdot), u^\eps (\cdot) \in {\cal A}$ and $\epsilon \in [0, 1]$.
\end{lem}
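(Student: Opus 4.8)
The plan is to obtain \eqref{eq10} by a direct duality computation, introducing no variational equation: I would expand the cost difference, apply It\^o's formula to the two products $\langle\bar p(t),x^\epsilon(t)-\bar x(t)\rangle$ and $\bar r(t)(Z^\epsilon(t)-\bar Z(t))$, and then collect terms. Abbreviate $\delta x=x^\epsilon-\bar x$, $\delta Z=Z^\epsilon-\bar Z$ and $\delta\phi=\phi^\epsilon-\bar\phi$ for $\phi=b,g,\tilde g,h,l$, so that $\delta x(0)=\delta Z(0)=0$ and $Z^{\bar u}=\bar Z$. Starting from the representation \eqref{eq:1.7} of $J$, the algebraic identities $Z^\epsilon l^\epsilon-\bar Z\bar l=\bar Z\,\delta l+\delta Z\,\bar l+\delta Z\,\delta l$ and $Z^\epsilon(T)m^\epsilon(T)-\bar Z(T)\bar m(T)=\bar Z(T)\,\delta m(T)+\delta Z(T)\bar m(T)+\delta Z(T)\delta m(T)$, and Bayes' rule in the form $\mathbb E[\bar Z(t)\xi_t]=\mathbb E^{\bar u}[\xi_t]$ for any $\mathscr F_t$-measurable $\xi_t$ (valid since $\bar Z$ is a $\mathbb P$-martingale), the cost difference splits as
\begin{equation*}
J(u^\epsilon(\cdot))-J(\bar u(\cdot))=\mathbb E^{\bar u}\Big[\int_0^T\delta l\,dt+\delta m(T)\Big]+\mathbb E\Big[\int_0^T\delta Z\,\bar l\,dt+\delta Z(T)\bar m(T)\Big]+\mathbb E\Big[\int_0^T\delta Z\,\delta l\,dt+\delta Z(T)\delta m(T)\Big].
\end{equation*}
The third bracket is already the sum of the third and fifth summands of \eqref{eq10}; it remains to show that the first and second brackets together account for the other four summands.

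For the first bracket I would work on $(\Omega,\mathscr F,\{\mathscr F_t\},\mathbb P^{\bar u})$, on which $(W,W^{\bar u})$ is a Brownian motion and $dY(t)=\bar h(t)\,dt+dW^{\bar u}(t)$. Substituting this into \eqref{eq:3.7} and subtracting the equations for $u^\epsilon$ and $\bar u$ gives
\begin{equation*}
d\,\delta x(t)=\big(\delta b(t)-\bar{\tilde g}(t)\delta h(t)-\delta\tilde g(t)\delta h(t)\big)\,dt+\delta g(t)\,dW(t)+\delta\tilde g(t)\,dW^{\bar u}(t).
\end{equation*}
Applying It\^o's formula to $\langle\bar p(t),\delta x(t)\rangle$ with the $\bar p$-dynamics of \eqref{eq:3.10}, integrating on $[0,T]$, taking $\mathbb E^{\bar u}$, and using both that $\mathbb E^{\bar u}[\bar H_y(t)]$ is deterministic and that $\mathbb E^{\bar u}[\frac{1}{\bar Z(t)}\delta x(t)]=\mathbb E[\delta x(t)]$ — together with the terminal condition $\bar p(T)=\bar m_x(T)+\frac{1}{\bar Z(T)}\mathbb E^{\bar u}[\bar m_x(T)]$ — one obtains
\begin{equation*}
\mathbb E^{\bar u}\Big[\Big\langle\delta x(T),\bar m_x(T)+\tfrac{1}{\bar Z(T)}\mathbb E^{\bar u}[\bar m_x(T)]\Big\rangle\Big]=\mathbb E^{\bar u}\Big[\int_0^T\big(\langle\bar p,\delta b\rangle+\langle\bar q,\delta g\rangle+\langle\bar{\tilde q},\delta\tilde g\rangle-\langle\bar p,\bar{\tilde g}\delta h\rangle-\langle\bar p,\delta\tilde g\,\delta h\rangle-\langle\bar H_x,\delta x\rangle-\big\langle\tfrac{1}{\bar Z}\mathbb E^{\bar u}[\bar H_y],\delta x\big\rangle\big)\,dt\Big].
\end{equation*}
On the other hand, since the coefficients $(\bar p,\bar q,\bar{\tilde q},\bar{\tilde R}-\bar{\tilde g}^\top\bar p)$ of $b,g,\tilde g,h$ in the reduced Hamiltonian \eqref{eq:3.11} do not depend on $(x,y,u,v)$, one has $H^\epsilon-\bar H=\langle\bar p,\delta b\rangle+\langle\bar q,\delta g\rangle+\langle\bar{\tilde q},\delta\tilde g\rangle+\bar{\tilde R}\delta h-\langle\bar p,\bar{\tilde g}\delta h\rangle+\delta l$. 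Combining these two relations, the first summand of \eqref{eq10} (the Hamiltonian integral) together with its second summand $-\mathbb E^{\bar u}[\int_0^T\langle(\tilde g^\epsilon-\bar{\tilde g})(h^\epsilon-\bar h),\bar p\rangle\,dt]$ and its sixth (terminal) summand $\mathbb E^{\bar u}[\delta m(T)-\langle\delta x(T),\bar m_x(T)+\frac{1}{\bar Z(T)}\mathbb E^{\bar u}[\bar m_x(T)]\rangle]$ sum to $\mathbb E^{\bar u}[\int_0^T(\bar{\tilde R}\delta h+\delta l)\,dt]+\mathbb E^{\bar u}[\delta m(T)]$ — the increments $\delta b,\delta g,\delta\tilde g$ and the $\bar H_x,\bar H_y$ terms all cancelling — which is precisely the first bracket plus the surplus quantity $\mathbb E^{\bar u}[\int_0^T\bar{\tilde R}\delta h\,dt]$.

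For the second bracket I would work on $(\Omega,\mathscr F,\{\mathscr F_t\},\mathbb P)$, where $Y$ is a Brownian motion and $dW^{\bar u}(t)=dY(t)-\bar h(t)\,dt$; then \eqref{eq:3.10} reads $d\bar r(t)=-(\bar l(t)+\bar{\tilde R}(t)\bar h(t))\,dt+\bar R(t)\,dW(t)+\bar{\tilde R}(t)\,dY(t)$, while $d\,\delta Z(t)=(Z^\epsilon(t)h^\epsilon(t)-\bar Z(t)\bar h(t))\,dY(t)$. Applying It\^o's formula to $\bar r(t)\delta Z(t)$, integrating, taking $\mathbb E$, and simplifying by means of $Z^\epsilon h^\epsilon-\bar Z\bar h-\delta Z\,\bar h=Z^\epsilon\delta h$, $Z^\epsilon=\bar Z+\delta Z$ and $\mathbb E[\bar Z(t)\bar{\tilde R}(t)\delta h(t)]=\mathbb E^{\bar u}[\bar{\tilde R}(t)\delta h(t)]$ yields
\begin{equation*}
\mathbb E\Big[\int_0^T\delta Z\,\bar l\,dt+\delta Z(T)\bar m(T)\Big]=\mathbb E^{\bar u}\Big[\int_0^T\bar{\tilde R}\,\delta h\,dt\Big]+\mathbb E\Big[\int_0^T\bar{\tilde R}\,(Z^\epsilon-Z^{\bar u})\,\delta h\,dt\Big],
\end{equation*}
whose left-hand side is the second bracket and whose right-hand side is $\mathbb E^{\bar u}[\int_0^T\bar{\tilde R}\delta h\,dt]$ plus the fourth summand of \eqref{eq10}. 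Hence the first bracket plus the second bracket equals the first, second, fourth and sixth summands of \eqref{eq10}, the two occurrences of $\mathbb E^{\bar u}[\int_0^T\bar{\tilde R}\delta h\,dt]$ cancelling; together with the third bracket, this recovers \eqref{eq10} in full and completes the proof.

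The main obstacle is not conceptual but organizational: one carries two It\^o expansions that live on different probability spaces and in different Brownian frames — $(W,W^{\bar u})$ under $\mathbb P^{\bar u}$ for $\langle\bar p,\delta x\rangle$, and $(W,Y)$ under $\mathbb P$ for $\bar r\,\delta Z$ — and must correctly account for the extra drift and cross-variation terms produced when translating one frame into the other, and must repeatedly invoke the mean-field identity $\mathbb E^{\bar u}[\frac{1}{\bar Z(t)}\mathbb E^{\bar u}[\,\cdot\,]\,\xi_t]=\mathbb E^{\bar u}[\,\cdot\,]\,\mathbb E[\xi_t]$ for $\mathscr F_t$-measurable $\xi_t$. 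The one genuinely technical point is justifying that every stochastic integral appearing is a true martingale of zero expectation: this rests on the $O(\epsilon^\gamma)$ bounds ($2\le\gamma\le4$) of Lemma \ref{lem:3.2}, the uniform $L^\alpha$-bounds on $\bar Z$ from Lemma \ref{lem:3.3}, the membership of $(\bar p,\bar q,\bar{\tilde q},\bar r,\bar R,\bar{\tilde R})$ in the stated $S^2$ and $M^2$ spaces, and H\"older's inequality to absorb the extra density factor $\bar Z(T)$ arising under $\mathbb P^{\bar u}$ — this is precisely where the strengthened integrability \eqref{eq:1.16} of the admissible controls is used.
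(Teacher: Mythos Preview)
Your proposal is correct and follows essentially the same route as the paper: split the cost difference via the algebraic identity on $Z^\epsilon l^\epsilon-\bar Z\bar l$, apply It\^o's formula to $\langle\bar p(t),x^\epsilon(t)-\bar x(t)\rangle$ under $\mathbb P^{\bar u}$ (the paper's \eqref{eq:3.22}) and to $\bar r(t)(Z^\epsilon(t)-\bar Z(t))$ under $\mathbb P$ (the paper's \eqref{eq:3.25}), and combine. You are in fact more explicit than the paper about the mean-field identity $\mathbb E^{\bar u}[\tfrac{1}{\bar Z(t)}\xi_t]=\mathbb E[\xi_t]$ and about the integrability needed to kill the stochastic integrals, both of which the paper passes over.
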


\begin{proof}
From the definitions of the Hamiltonian ${ H}$
(see \eqref{eq:2.1})
and the cost functional  $J (u (\cdot))$ (see
\eqref{eq:1.6} or
\eqref{eq:1.7}), it is easy to check  that
\begin{eqnarray}\label{eq:3.19}
\begin{split}
&J (u^\epsilon (\cdot)) - J (\bar u (\cdot)) \\ =&\mathbb E\displaystyle\bigg[%
\int_{0}^{T}Z^\eps(t)l^\eps(t)dt +Z^\varepsilon (T)m^\eps(T)\bigg]-\mathbb E\displaystyle\bigg[%
\int_{0}^{T}\bar Z(t)\bar l(t)dt +\bar Z (T)\bar m(T)\bigg]\\
=&\mathbb E^{\bar u}\bigg[\int_0^T(l^\eps(t)-\bar l(t))dt\bigg]+\mathbb E^{\bar
u}[m^\eps(T)-\bar m(T)]
 \\&+\mathbb E\bigg[\int_0^Tl^\eps(t)(\bar
Z^\eps(t)-\bar Z(t))dt\bigg]+\mathbb E \big[m^\eps(T)(\bar Z^\eps(T)-\bar Z(T))\big].
\\=&
 {\mathbb E}^{\bar u}\bigg[
 \int_0^T \Big( { H}^\eps(t) - { \bar H} (t)
 - \langle b^\epsilon (t) - \bar b (t), p (t)\rangle - \langle g^\epsilon (t) - \bar g (t), q (t)\rangle
- \langle \tilde g^\epsilon (t) - {\bar {\tilde g}} (t), \tilde
q(t)\rangle
\\&-\langle h^\eps(t)-\bar h(t),\tilde{R}(t)-{\bar {\tilde
g}^\top(t)}p(t)\rangle \Big) dt\bigg]+\mathbb E^{\bar u}[m^\eps(T)-\bar
m(T)]\\& +\mathbb E\bigg[\int_0^Tl^\eps(t)(\bar Z^\eps(t)-\bar
Z(t))dt\bigg]+\mathbb E[m^\eps(T)(\bar Z^\eps(T)-\bar Z(T))] .
\end{split}
\end{eqnarray}
From \eqref{eq:1.1} and the relation \eqref{eq:1.2}, it is easily to see that  $x^\eps(\cdot)-x(\cdot)$ satisfies
the following mean-field SDE
\begin{equation}
\displaystyle\left\{
\begin{array}{ll}
&d(x^\eps(t)-x(t)) =  [b^\eps(t)-\bar b(t)]dt+\displaystyle
[g^\eps(t)-\bar g(t)]dW(t)+ [\tilde g^\eps(t)-\bar {\tilde g}(t)]dW^{\bar u}(t)
\\&\quad \quad \quad\quad \quad
\quad \quad- \tilde g^\eps(t)[h^\eps(t)-\bar h(t)]dt
, \\
&\displaystyle x^\eps(0)-\bar x(0)=  0.
\end{array}%
\right.  \label{eq:4.10}
\end{equation}
From \eqref{eq:3.10}, we know that
$(\bar p(\cdot),\bar q(\cdot),\bar {\tilde q}(\cdot) )$ satisfies the
following  mean-field BSDE

\begin{numcases}{}\label{eq:3.3}
\begin{split}
d\bar p\left(  t\right)  &=-\Big\{{\bar H}_{x}\left( t \right) +\frac{1}{\bar Z(t)}\mathbb E^{\bar u}\big[{\bar H}_{y}(t)\big]  \Big\}dt+\bar q
\left(  t\right)  dW\left(  t\right)  +\bar{\tilde q}\left(  t\right)
dW^{\bar u}\left( t\right),
\\\bar p(T)&=\bar m_x(T)+\frac{1}{\bar Z(T)}\mathbb E^{\bar u} [\bar m_y(T)].
\end{split}
\end{numcases}
Applying It\^o's formula to $\langle  x^\epsilon (t) - \bar x (t), \bar p (t) \rangle $ and takeing
expectation under the probability $\mathbb P^{\bar u}$
results in
\begin{eqnarray}\label{eq:3.22}
  &&{\mathbb
E}^{\bar u} \bigg [ \int^T_0  \langle b^\epsilon (t) - \bar b (t), p
(t)\rangle +\langle g^\epsilon (t) - \bar g (t), q (t)\rangle +
\langle \tilde g^\epsilon (t) - {\bar {\tilde g}} (t), \tilde
q(t)\rangle \bigg]dt \nonumber
\\=&&
\mathbb E^{\bar u}\bigg[\int_0^T\big < x^\epsilon (t) - \bar x (t), { H}_x (t) +\frac{1}{\bar Z(t)} {\mathbb E}^{\bar u} [{ H} _y(t)]  \big>dt\bigg]
\\&&+{\mathbb E}^{\bar u} \big [  \left <
x^\epsilon (T) - \bar x (T), \bar m_x (T)
 + \frac{1}{\bar Z(T)} {\mathbb E}^{\bar u} [ \bar m_{y}
(T)]  \right
> \big ]\nonumber
\\&&+\mathbb E^{\bar u}\bigg[\int_0^T \Big\langle (h^\eps(t)-\bar h(t)) {\tilde
g}^{{\eps}}(t), p(t)\Big\rangle dt\bigg ].
\end{eqnarray}
On the other hand, from \eqref{eq:3.4}, it is easy to check that  $Z^\eps(\cdot)-\bar Z(\cdot)$
satisfies the following mean-field SDE
\begin{equation}
\displaystyle\left\{
\begin{array}{lll}
d(Z^\eps(t)-\bar Z(t)) & = & (Z^\eps (t)h^\eps(t)-\bar Z(t)\bar h(s))dY(t), \\
\displaystyle Z^\eps(0)-\bar Z(0)  & = & 0,
\end{array}
\right.  \label{eq:4.13}
\end{equation}
and from \eqref{eq:3.10}, $(\bar r(\cdot),\bar R(\cdot),\bar {\tilde R}(\cdot) $
satisfies the following mean-field
BSDE
\begin{numcases}{}\label{eq:3.24}
\begin{split}
d\bar r(t)&=-\bar l(t)dt+\bar R\left(
t\right)  dW\left(  t\right) +\bar {\tilde R}\left( t\right) dY\left( t\right)-\bar{\tilde R}\left( t\right)
\bar h(t) dt,\\
\bar r(T)&=\bar m(T).
\end{split}
\end{numcases}
Applying It\^o's formula to $(Z^\eps (t) - \bar Z (t)) \bar r (t) $ and taking expectation
under the probability    $P $
results in
\begin{eqnarray}\label{eq:3.25}
\begin{split}
 &\mathbb E[(Z^\eps(T)-\bar Z(T))\bar m(T)]+\mathbb E\bigg[\int_0^T\bar l(t)(\bar
Z^\eps(t)-\bar Z(t))dt\bigg]
\\&=\mathbb E\bigg[\int_0^T \bar{\tilde
R}(t) Z^\eps (t)(h^\eps(t)-\bar h(s))dt\bigg].
\end{split}
\end{eqnarray}
Now  putting \eqref{eq:3.22} and \eqref{eq:3.25}  into \eqref{eq:3.19}, we deduce that
\eqref{eq10} holds. The proof is complete.
\end{proof}

Now we are in the position to use Lemma \ref{lem:3.2} and Lemma
\ref{lem:2.3} to derive the variational formula for the cost functional
$J(u(\cdot))$ in terms of the Hamiltonian ${ H}$.

\begin{thm}\label{them:3.7}
 Let Assumptions \ref{ass:1.1} and \ref{ass:1.2}
 be satisfied.  Let $(\bar u(\cdot), \bar x(\cdot),\bar Z(\cdot))$
 and $(u(\cdot), x(\cdot), Z(\cdot))$
 be two any given admissible pair.
 And let $(\bar p(\cdot),\bar q(\cdot),\bar {\tilde q}(\cdot),\bar r(\cdot),\bar R(\cdot),\bar {\tilde R}(\cdot) )$ be
  the adjoint process corresponding
 to the admissible pair $(\bar u(\cdot), \bar x(\cdot), \bar Z(\cdot)).$
 Then  for  the cost functional
 \eqref{eq:1.6} or \eqref{eq:1.7},using
 the notations \eqref{eq:3.13}, \eqref{eq:3.14} and
 \eqref{eq:3.16},
 we  have  the following
 variation formula:
 \begin{eqnarray}
&& \frac{d}{d\epsilon} J ( \bar u (\cdot) + \epsilon ( u (\cdot) - \bar u (\cdot) ) ) |_{\epsilon=0}  := \lim_{\epsilon \rightarrow 0^+}
\frac{ J ( \bar u (\cdot) + \epsilon ( u (\cdot) - \bar u (\cdot) ) ) - J( \bar u (\cdot) ) }{\epsilon} \nonumber \\
&& = {\mathbb E} \bigg [ \int_0^T \left <
\bar Z(t)\bar { H}_u (t)+\mathbb E^{\bar u}[\bar H_v(t)], u (t) - \bar u
(t) \right > d t \bigg ] \ .
\end{eqnarray}
\end{thm}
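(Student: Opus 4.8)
The plan is to start from the exact difference formula \eqref{eq10} provided by Lemma \ref{lem:2.3}, divide by $\epsilon$, and pass to the limit $\epsilon\to0^+$ term by term, using the $O(\epsilon^\gamma)$ estimates of Lemma \ref{lem:3.2} to control the remainders. First I would observe that, by the definition of the perturbed control $u^\epsilon(\cdot)$ in \eqref{eq:2.7} and a first-order Taylor expansion of the Hamiltonian $H$ in $(x,y,u,v)$ (valid by the $C^1$ and growth hypotheses in Assumptions \ref{ass:1.1} and \ref{ass:1.2}), one has, uniformly in $t$,
\[
H^\epsilon(t)-\bar H(t)
=\big\langle \bar H_x(t),x^\epsilon(t)-\bar x(t)\big\rangle
+\big\langle \bar H_y(t),\mathbb E[x^\epsilon(t)]-\mathbb E[\bar x(t)]\big\rangle
+\epsilon\big\langle \bar H_u(t),u(t)-\bar u(t)\big\rangle
+\epsilon\big\langle \bar H_v(t),\mathbb E[u(t)-\bar u(t)]\big\rangle
+o(\epsilon).
\]
Substituting this into the first bracket of \eqref{eq10}: the $\bar H_x$ term cancels against the explicit $-\langle x^\epsilon-\bar x,\bar H_x\rangle$ already present; for the $\bar H_y$ term I would use the measure-change identity $\mathbb E[x^\epsilon(t)]-\mathbb E[\bar x(t)]=\mathbb E^{\bar u}\big[\tfrac{1}{\bar Z(t)}(x^\epsilon(t)-\bar x(t))\big]$ together with Fubini to match it against the $-\langle x^\epsilon-\bar x,\tfrac{1}{\bar Z}\mathbb E^{\bar u}[\bar H_y]\rangle$ term, so these also cancel; what survives from that bracket, after dividing by $\epsilon$, is exactly $\mathbb E^{\bar u}\big[\int_0^T\langle\bar H_u(t),u(t)-\bar u(t)\rangle+\langle\bar H_v(t),\mathbb E[u(t)-\bar u(t)]\rangle\,dt\big]$, which one rewrites under $\mathbb P$ as $\mathbb E\big[\int_0^T\langle\bar Z(t)\bar H_u(t)+\mathbb E^{\bar u}[\bar H_v(t)],u(t)-\bar u(t)\rangle\,dt\big]$ using $d\mathbb P^{\bar u}=\bar Z(T)d\mathbb P$ and taking conditional expectations appropriately. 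The same Taylor-and-cancellation argument applied to the terminal terms $m^\epsilon(T)-\bar m(T)$ shows that the last line of \eqref{eq10} contributes $o(\epsilon)$ after the $\bar m_x$ pieces cancel.

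The remaining four terms in \eqref{eq10} are all genuinely second order and must be shown to be $o(\epsilon)$. For $\mathbb E^{\bar u}\big[\int_0^T\langle(\tilde g^\epsilon-\bar{\tilde g})(h^\epsilon-\bar h),\bar p\rangle\,dt\big]$ I would write $\tilde g^\epsilon-\bar{\tilde g}$ and $h^\epsilon-\bar h$ each as $O(|x^\epsilon-\bar x|+|\mathbb E[x^\epsilon-\bar x]|+\epsilon|u-\bar u|+\epsilon|\mathbb E[u-\bar u]|)$ via the Lipschitz bounds in Assumption \ref{ass:1.1}, apply Cauchy–Schwarz (and Hölder, to pull in $\bar p\in S^2_{\mathscr F}$ and $\bar Z^{-1}$ under the change of measure using \eqref{eq:1.9}), and invoke Lemma \ref{lem:3.2} with $\gamma=2$ and $\gamma=4$ to get a bound of order $\epsilon\cdot\epsilon=\epsilon^2$. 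The terms $\mathbb E\big[\int_0^T(Z^\epsilon-\bar Z)(l^\epsilon-\bar l)\,dt\big]$, $\mathbb E\big[\int_0^T\bar{\tilde R}(Z^\epsilon-Z^{\bar u})(h^\epsilon-\bar h)\,dt\big]$ (note $Z^{\bar u}=\bar Z$) and $\mathbb E\big[(Z^\epsilon(T)-\bar Z(T))(m^\epsilon(T)-\bar m(T))\big]$ are handled identically: each is a product of two factors each of which is $O(\epsilon)$ in the appropriate $L^p$-norm (here one uses the growth bound on $l$, which is quadratic, so one needs the fourth-moment estimates in Lemma \ref{lem:3.2} — this is exactly why $\gamma$ is allowed up to $4$ there and why the integrability condition \eqref{eq:1.16} was imposed), giving $O(\epsilon^2)$.

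Dividing through by $\epsilon$ and letting $\epsilon\to0^+$ then yields
\[
\lim_{\epsilon\to0^+}\frac{J(\bar u(\cdot)+\epsilon(u(\cdot)-\bar u(\cdot)))-J(\bar u(\cdot))}{\epsilon}
=\mathbb E\bigg[\int_0^T\big\langle \bar Z(t)\bar H_u(t)+\mathbb E^{\bar u}[\bar H_v(t)],\,u(t)-\bar u(t)\big\rangle\,dt\bigg],
\]
which is the asserted variation formula. The main obstacle I anticipate is not any single estimate but the bookkeeping in the first bracket: one has to be careful that the $\bar H_y$ cancellation really goes through after the change of measure, since $\bar H_y$ appears inside $\mathbb E^{\bar u}[\cdot]$ with the weight $1/\bar Z(t)$ while $\mathbb E[x^\epsilon(t)]-\mathbb E[\bar x(t)]$ is a deterministic quantity computed under $\mathbb P$; reconciling these requires writing $\mathbb E[x^\epsilon(t)]=\mathbb E^{u^\epsilon}[\cdots]$ versus $\mathbb E^{\bar u}[\cdots]$ correctly and checking that the discrepancy between $Z^\epsilon$ and $\bar Z$ inside that expectation only contributes at order $o(\epsilon)$ — again a consequence of Lemma \ref{lem:3.2}. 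Everything else is a routine application of Hölder's inequality together with the a priori estimates already established.
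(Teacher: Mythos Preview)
Your proposal is correct and follows essentially the same route as the paper: start from the exact identity of Lemma~\ref{lem:2.3}, Taylor-expand $H$ and $m$, and show that all remainder pieces are $o(\epsilon)$ using the estimates of Lemma~\ref{lem:3.2}. The only notable difference is that you aim for an $O(\epsilon^2)$ bound on the four cross terms via H\"older, whereas the paper obtains merely $o(\epsilon)$ by pairing one $O(\epsilon)$ factor from Lemma~\ref{lem:3.2} with a dominated-convergence argument (exploiting the boundedness of $h$) on the other factor; both arguments go through, and your explicit handling of the $\bar H_y$ cancellation via the Bayes-type identity $\mathbb E[X]=\mathbb E^{\bar u}[X/\bar Z(t)]$ (valid because $\bar Z$ is a $\mathbb P$-martingale) is exactly what the paper does implicitly when it bounds its term $\beta_1^\epsilon$.
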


\begin{proof}

Set
\begin{eqnarray}
\begin{split}
u^\epsilon (\cdot) = \bar u (\cdot) + \epsilon ( u (\cdot) - \bar u
(\cdot) ) \ , \quad  0 \leq \epsilon \leq 1 \ .
\end{split}
\end{eqnarray}
Let  $(x^\eps (\cdot), Z^\eps(\cdot))$ be the state
process corresponding to $u^\eps(\cdot).$
For notational simplicity,  using
 the notations \eqref{eq:3.13}, \eqref{eq:3.14} and
 \eqref{eq:3.16}, we write
\begin{eqnarray}
\begin{split}
\beta^\epsilon_1 :=&{\mathbb E} ^{\bar u} \bigg[ \int_0^T \Big ( { H}^\eps(t) - {
\bar H} (t) -
\big < x^\epsilon (t) - \bar x (t), { \bar H}_x (t) + \frac{1}{\bar Z(t)}{\mathbb E}^{\bar u} [{ \bar H} _y(t)] \big>
\\&\quad \quad- \big < u^\epsilon (t) -
\bar u (t), { \bar H}_u (t)+ \frac{1}{\bar Z(t)}\mathbb E^{\bar u}[{ \bar H}_v (t)]
\big>\Big) dt\bigg],
\end{split}
\end{eqnarray}

\begin{eqnarray}
\begin{split}
\beta^\epsilon_2 :={\mathbb E}^{\bar u}\bigg [ \int_0^T  \langle (\tilde g^\varepsilon(t)-\bar{\tilde g}(t))(h^\eps(t)-\bar
 h(t)), \bar p(t)\rangle dt\bigg],
\end{split}
\end{eqnarray}

\begin{eqnarray}
\begin{split}
\beta^\epsilon_3 :={\mathbb E}^u \big [ m^\epsilon (T) - \bar m (T) - \left <
x^\epsilon (T) - \bar x (T), \bar m_x (T) +
 \frac{1}{\bar Z(T)}{\mathbb E}^{\bar u}[\bar
m_{ x} (T)]  \right
> \big ],
\end{split}
\end{eqnarray}

\begin{eqnarray}
\begin{split}
\beta^\epsilon_4 :=\mathbb E \bigg[\int_0^T(\bar Z^\eps(t)-\bar
Z(t))(l^\eps(t)-\bar l(t))dt\bigg],
\end{split}
\end{eqnarray}

\begin{eqnarray}
\begin{split}
\beta^\epsilon_5 :=\mathbb E\bigg[\int_0^T \tilde R(t)(Z^\eps(t)-\bar Z(t))(h^\eps(t)-\bar
 h(t))dt\bigg],
\end{split}
\end{eqnarray}

\begin{eqnarray}
\begin{split}
\beta^\epsilon_6 :=\mathbb E \bigg[  ( Z^\epsilon (T) - \bar Z(T)) (m^\epsilon (T) -
\bar m (T))\bigg].
\end{split}
\end{eqnarray}
By Lemma \ref{lem:2.3}, we have
\begin{eqnarray}\label{eq:3.35}
\begin{split}
&J ( u^\epsilon (\cdot) ) - J ( \bar u(\cdot) )
 \\&= \beta^\epsilon +
\epsilon {\mathbb E}^{\bar u} \bigg [ \int_0^T \left < { \bar H}_u (t)+\frac{1}{\bar Z(t)}\mathbb E^{\bar u}[{ \bar H}_v (t)] , u (t) -\bar  u (t) \right >
d t \bigg ] ,
\end{split}
\end{eqnarray}
where
\begin{eqnarray} \label{eq:3.36}
\begin{split}
\beta^\varepsilon=\beta^\varepsilon_1+\beta^\varepsilon_2+\beta^\varepsilon_3
+\beta^\varepsilon_4+\beta^\varepsilon_5+\beta^\varepsilon_6.
\end{split}
\end{eqnarray}
Now we begin to prove
\begin{equation}\label{eq:2.29}
  \beta^\eps=o(\eps)
\end{equation}
Indeed, for $\beta^\epsilon_2,$ under  Assumptions \ref{ass:1.1},
we have
\begin{eqnarray}\label{eq:3.37}
\begin{split}
|\beta^\epsilon_2| &\leq {\mathbb E}\bigg [ \int_0^T  |\tilde g^\varepsilon(t)-\bar{\tilde g}(t))||(h^\eps(t)-\bar
 h(t))|| \bar p(t)|dt\bigg]
 \\&\leq
  C{\mathbb E}\bigg [ \int_0^T  \big( | u^\eps (t) - \bar u (t) |+| \mathbb E [u^\eps (t)] -
  \mathbb E[\bar u (t)] |+
 | x^\eps (t) - \bar x (t) |+| {\mathbb E}[ x^\eps (t)] - \mathbb E[\bar x (t)]|\big)
 \\&~~~~~~~~~\quad\quad\quad \cdot|(h^\eps(t)-\bar
 h(t))||\bar  p(t)|dt\bigg]
 \\&\leq
  C\bigg\{{\mathbb E}\bigg [ \int_0^T  \big( | u^\eps (t) - \bar u (t) |^2+
 | x^\eps (t) - \bar x (t) |^2\big) dt\bigg]\bigg\}^{\frac{1}{2}}\bigg\{{\mathbb E}\bigg[\int_0^T|(h^\eps(t)-\bar
 h(t))|^2| \bar p(t)|^2dt\bigg]\bigg\}^{\frac{1}{2}}
 \\&
 \leq C\eps \bigg[\int_0^T|(h^\eps(t)-\bar
 h(t))|^2| \bar p(t)|^2dt\bigg]\bigg\}^{\frac{1}{2}}
 \\&=o(\eps),
\end{split}
\end{eqnarray}
where the last second inequality can be obtained
by  Lemma \ref{lem:3.2}
and the last inequality  can be got by  the fact that
\begin{eqnarray}
  \begin{split}
   \lim_{\eps\longrightarrow 0} \mathbb E\bigg[\int_0^T|(h^\eps(t)-\bar
 h(t))|^2| \bar p(t)|^2dt\bigg]
 =0,
  \end{split}
\end{eqnarray}
which can be obtained by the
Lemma \ref{lem:3.2} and the dominated convergence theorem,
since the function $h$ is bounded.
\\
 For  $\beta_5,$
in view of
Lemma \ref{lem:3.2} and the dominated convergence theorem. we have
\begin{eqnarray}\label{eq:3.39}
\begin{split}
|\beta^\epsilon_5|\leq& \mathbb E\bigg[\int_0^T |\bar{\tilde R}(t)||Z^\eps(t)-\bar Z(t)||h^\eps(t)-\bar
 h(t)|dt\bigg]
 \\ \leq & \mathbb E\bigg[\sup_{0\leq t \leq T}|Z^\eps(t)-\bar Z(t)| \int_0^T |
 \bar{\tilde R}(t)||h^\eps(t)-\bar
 h(t)|dt\bigg]
 \\ \leq & C \bigg\{\mathbb E\bigg[\sup_{0\leq t \leq T}|Z^\eps(t)-\bar Z(t)|^2\bigg]\bigg\}^{\frac{1}{2}} \bigg\{\mathbb E\bigg[\int_0^T |\bar{\tilde R}(t)|^2|h^\eps(t)-\bar
 h(t)|^2dt\bigg]\bigg\}^{\frac{1}{2}}
 \\ \leq & C \eps \bigg\{\mathbb E\bigg[\int_0^T |\bar{\tilde R}(t)|^2|h^\eps(t)-\bar
 h(t)|^2dt\bigg]\bigg\}^{\frac{1}{2}}
 \\  = &o(\eps).
\end{split}
\end{eqnarray}
For $\beta^\epsilon_4,$  in view of
Lemma \ref{lem:3.2} and the dominated convergence theorem, we have
\begin{eqnarray} \label{eq:3.40}
\begin{split}
|\beta^\epsilon_4 | &\leq \mathbb E \bigg[\int_0^T|\bar Z^\eps(t)-\bar
Z(t)||l^\eps(t)-\bar l(t)|dt\bigg]
\\ &\leq  \mathbb E\bigg[\sup_{0\leq t \leq T}|Z^\eps(t)-\bar Z(t)| \int_0^T |l^\eps(t)-\bar
 l(t)|dt\bigg]
 \\ &\leq \bigg\{\mathbb E\bigg[\sup_{0\leq t \leq T}|Z^\eps(t)-\bar Z(t)|^2\bigg]\bigg\}^{\frac{1}{2}} \bigg\{\mathbb E\bigg[\int_0^T |l^\eps(t)-\bar
 l(t)|dt\bigg]^2\bigg\}^{\frac{1}{2}}
 \\ &\leq C \eps \bigg\{\mathbb E\bigg[\int_0^T |l^\eps(t)-\bar
 l(t)|dt\bigg]^2\bigg\}^{\frac{1}{2}}
 \\&=o(\eps).
\end{split}
\end{eqnarray}
For $\beta^\epsilon_6, $  in view of
Lemma \ref{lem:3.2} and the dominated convergence theorem, we get
\begin{eqnarray} \label{eq:3.41}
\begin{split}
|\beta^\epsilon_6 | &\leq \mathbb E \bigg[|\bar Z^\eps(T)-\bar
Z(T)||m^\eps(T)-\bar m(T)|\bigg]
 \\ &\leq \bigg\{\mathbb E\bigg[|Z^\eps(T)-\bar Z(T)|^2\bigg]\bigg\}^{\frac{1}{2}}
 \bigg\{\mathbb E\bigg[|m^\eps(T)-m^{\bar u}(T)|^2\bigg]\bigg\}^{\frac{1}{2}}
 \\ &\leq C\eps
 \bigg\{\mathbb E\bigg[|m^\eps(T)-\bar m(T)|^2\bigg]\bigg\}^{\frac{1}{2}}
 \\&=o(\eps).
\end{split}
\end{eqnarray}
For $\beta^\epsilon_3,$  under Assumptions \ref{ass:1.1} and \ref{ass:1.2}, using the Taylor Expansions on the function $m$ with respect to
$x$ and $y$,
Lemma \ref{lem:3.2} and the dominated convergence theorem leads to

\begin{eqnarray} \label{eq:3.42}
&&|\beta^\epsilon_3 | \nonumber
\\&\leq& {\mathbb E}^{\bar u}\Big[ |
\langle
x^\epsilon (T) - \bar x (T),  m_x^{\eps, \lambda}(T)-\bar m_x (T) +\frac{1}{\bar Z(T)}\mathbb E^{\bar u}[m_y^{\eps, \lambda}(T)]- \mathbb E[\bar m_y (T)]\rangle \big |\Big] \nonumber
\\&\leq & \bigg\{
{\mathbb E}\big[|\bar Z(T)|^2\big]\bigg\}^{\frac{1}{2}}
   \bigg\{\mathbb E\bigg[|x^\eps(T)-\bar x(T)|^4\bigg]\bigg\}^{\frac{1}{4}}\bigg\{\mathbb E\bigg[|m_x^{\eps, \lambda}(T)-\bar m_x (T) +
   \mathbb E[m_y^{\eps, \lambda}(T)]
   -\mathbb E[\bar m_y (T)]|^4\bigg]\bigg\}^{\frac{1}{4}}\nonumber
   \\&\leq &  C\eps\bigg\{\mathbb E\bigg[|m_x^{\eps, \lambda}(T)-\bar m_x (T) +\mathbb E[m_y^{\eps, \lambda}(T)]-\mathbb E[\bar m_y (T)]|^4\bigg]\bigg\}^{\frac{1}{4}}\nonumber
   \\&=&o(\eps),
\end{eqnarray}
where we have used the following shorthand
notations:
$$m_x^{\eps, \lambda}(T)= \int_0^1 m_x(\bar x(T)+ \lambda (x^\eps(T)-\bar x(T)), \mathbb E[\bar x(T)]+ \lambda (\mathbb E [x^\eps(T)]-\mathbb E[\bar x(T)]))d\lambda,$$
and
$$m_y^{\eps, \lambda}(T)= \int_0^1 m_y(\bar x(T)+ \lambda (x^\eps(T)-\bar x(T)),\mathbb E[\bar x(T)]+\lambda (\mathbb E[x^\eps(T)]-\mathbb E[\bar x(T)]))d\lambda.$$
Similar to \eqref{eq:3.42},
 using the Taylor Expansions on the function $H$ with respect to
$x, y,u$ and $v$, Lemma \ref{lem:3.2} and the dominated convergence theorem, we have
\begin{eqnarray} \label{eq:3.43}
  \begin{split}
    \beta_1^\eps= o(\eps).
  \end{split}
\end{eqnarray}
Therefore,  combing
\eqref{eq:3.39}-\eqref{eq:3.43} and
using \eqref{eq:3.36},  we get that
\eqref{eq:2.29} holds.
 Then putting \eqref{eq:2.29} into
 \eqref{eq:3.35}, we have
 \begin{eqnarray}
 \begin{split}
& \frac{d}{d\epsilon} J ( \bar u (\cdot) + \epsilon ( u (\cdot) - \bar u (\cdot) ) ) |_{\epsilon=0} \nonumber \\
& = \lim_{\epsilon \rightarrow 0^+}
\frac{ J ( \bar u (\cdot) + \epsilon ( u (\cdot) - \bar u (\cdot) ) ) - J( \bar u (\cdot) ) }{\epsilon} \nonumber \\
& =  \lim_{\epsilon \rightarrow 0^+}\frac{\beta^\epsilon +
\epsilon {\mathbb E}^{\bar u} \bigg [
 \displaystyle\int_0^T \left < {\bar H}_u (t)+\mathbb E[\bar H_v(t)] , u (t) -\bar  u (t) \right >
d t \bigg ]}{\eps}
\\&= {\mathbb E} \bigg [ \displaystyle\int_0^T \left < {\bar Z(t)}{ \bar  H}_u (t)+ \mathbb E^{\bar u}[\bar H_v(t)] , u (t) -\bar  u (t) \right >
d t \bigg ]\ .
\end{split}
\end{eqnarray}
The proof is complete.
\end{proof}

Now we are ready to prove Theorem \ref{thm:3.4}

\begin{proof}
Since all admissible controls
are $\{\mathscr F^Y_t\}_{t\in \cal T}$-adapted
processes, from the property of conditional
expectation, Theorem \ref{them:3.7} and the optimality of $\bar u(\cdot)$,  we
deduce that
\begin{eqnarray*}
&& {\mathbb E} \bigg [ \int_0^T
 \langle {\mathbb E}
  [{\bar Z(t)}{ \bar  H}_u (t)
  +\mathbb E^{\bar u}[\bar H_v(t)] | {\mathscr F}_t ^Y] ,
u (t) - \bar u (t) \rangle  d t \bigg ] \\
&& = {\mathbb E} \bigg[\int_0^T \langle 
{\bar Z(t)}{ \bar  H}_u (t)+\mathbb E^{\bar u}[\bar H_v(t)]
, u (t) - \bar u (t) \rangle \d t \bigg ] \\
&& = \lim_{\epsilon \rightarrow 0} \frac{J( \bar u (\cdot) + \epsilon (
u (\cdot) - \bar u (\cdot) ) ) - J ( \bar u (\cdot) )}{\epsilon} \geq 0 ,
\end{eqnarray*}
which imply that \eqref{eq:3.2000}
holds. The proof is complete.
\end{proof}

\section{ Stochastic Maximum Principle  in Strong  Formulation }

This section is devoted to establish
the stochastic maximum principles of
Problem \ref{pro:4.1}.  In this case, the Hamiltonian
$H:[0,T]\times \Omega \times
\mathbb{R}^n \times
\mathbb{R}^n\times U \times U\times
\mathbb{R}^n\times
\mathbb{R}^n\times
\mathbb{R}^n  %
\rightarrow \mathbb{R}$ is defined  by
\begin{eqnarray} \label{eq:4.3}
\begin{split}
  H(t,x,y,u,v,p,q, \tilde {q})=&\langle p,  b(t,x,y,u,v)-\tilde g(t,x,y,u,v)h(t, x,y, u,v)\rangle
  \\&+\langle q,
g(t,x,y,u,v)\rangle +   \langle\tilde q,\tilde g(t,x,y,u,v) \rangle +l(t,x,y,u,v).
\end{split}
\end{eqnarray}
Then for any admissible pair
$(\bar u(\cdot), \bar x(\cdot)),$  the
corresponding  adjoint process
 is  defined   as   the solution to  the following mean-field
BSDE:

\begin{numcases}{}\label{eq:4.4}
\begin{split}
d\bar p\left(  t\right)  &=-\bigg[{\bar H}_{x}(t)+\mathbb E[\bar H_y(t)] \bigg ]dt+\bar q\left(  t\right)  dW\left(  t\right)  +\bar{\tilde q}\left(  t\right)
dY\left( t\right),
\\ \bar P(T)&= \bar m_x( T)+\mathbb E[ \bar m_{y}(T)],
\end{split}
\end{numcases}
where we have used the following
shorthand notation
\begin{eqnarray}\label{eq:3.3}
\left\{
\begin{aligned}
    \bar H(t)=&{ H}(t,\bar x(t), \mathbb E[\bar x(t)] ,\bar u(t),
  \mathbb E[\bar u(t)],\bar p(t),\bar q(t),
   {\bar{\tilde q}}(t)),\\
   \bar m(T)=& m(\bar x(T),  \mathbb E[\bar x(T)]).
\end{aligned}
\right.
\end{eqnarray}
Under Assumption \ref{ass:1.1} and
\ref{ass:1.2},
by Buckdahn (2009b),  \eqref{eq:4.4} admits
a unique strong  slution $(\bar p(\cdot), \bar q(\cdot),
\bar {\tilde q}(\cdot))\in S_{\mathscr{F}}^2 (0,T;\mathbb R^{n})\times M_{\mathscr{F}}^2(0,T;\mathbb R^n)
\times M_{\mathscr{F}}^2(0,T;\mathbb R^n),$
which is also called the adjoint process corresponding
to the admissible pair $(u(\cdot), x(\cdot))$

\subsection{ Sufficient Conditions of Optimality }
In this section, we are going to establish
the sufficient Pontryagin maximum principle of Problem  \ref{pro:4.1}.  To this end,
we need the following  Lemma.

\begin{lem}\label{lem:4.2}
Let Assumptions \ref{ass:1.1} and
\ref{ass:1.2} be
satisfied. Let $(u(\cdot), x(\cdot))$
and $(\bar u(\cdot), \bar x(\cdot))$
be two any given  admissible  pair  of
Problem \ref{pro:4.1}.
Let $(\bar p(\cdot), \bar q(\cdot), \bar{\tilde q}(\cdot))$
be the
adjoint process associated with
the admissible pair
$(\bar u(\cdot), \bar x(\cdot)).$
   Then  for the
     cost functional
     \eqref{eq:1.10}, using the notation
     \eqref{eq:3.3}, we have the  following presentation:
\begin{eqnarray}\label{eq:4.6}
&&J (u (\cdot)) - J (\bar u(\cdot))\nonumber
\\ &=& {\mathbb E}
\int_0^T \bigg [ { H}(t)- \bar H(t)-
\big < x^\epsilon (t) - \bar
x (t), {\bar H}_x (t)+
{\mathbb E} [{ \bar H} _y(t)]
\big>\nonumber \bigg]dt\\
&&+{\mathbb E}\big [ m (T) - \bar m (T) - \left <
x (T) - \bar x (T), \bar m_x (T) + {\mathbb E}  [\bar
m_{\overline y} (T)]  \right
> \big ],
\end{eqnarray}
where
\begin{eqnarray}\label{eq:3.50}
\left\{
\begin{aligned}
     H(t):=&{ H}(t, x(t), \mathbb E[x(t)] , u(t),
  \mathbb E[u(t)],\bar p(t),\bar q(t),
   {\bar{\tilde q}}(t)),\\
    m(T):=& m(x(T),  \mathbb E[x(T)]) .
\end{aligned}
\right.
\end{eqnarray}
\end{lem}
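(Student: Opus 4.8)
The plan is to mimic the derivation of Lemma \ref{lem:2.3}, but in the simpler setting of Problem \ref{pro:4.1}, where the probability measure is fixed at $\mathbb P$ and $Z(\cdot)$ does not appear in the cost. First I would write $J(u(\cdot)) - J(\bar u(\cdot))$ directly from \eqref{eq:1.10} as
\[
\mathbb E\Big[\int_0^T (l(t)-\bar l(t))\,dt\Big] + \mathbb E\big[m(T)-\bar m(T)\big],
\]
and then express the running-cost difference $l(t)-\bar l(t)$ via the Hamiltonian \eqref{eq:4.3}: since $H(t) = \langle \bar p(t), b(t)-\tilde g(t)h(t)\rangle + \langle \bar q(t), g(t)\rangle + \langle \bar{\tilde q}(t), \tilde g(t)\rangle + l(t)$ (with analogous identity for $\bar H(t)$), we have
\[
l(t)-\bar l(t) = H(t)-\bar H(t) - \langle \bar p(t),\, (b-\tilde gh)(t)-(\bar b-\bar{\tilde g}\bar h)(t)\rangle - \langle \bar q(t),\, g(t)-\bar g(t)\rangle - \langle \bar{\tilde q}(t),\, \tilde g(t)-\bar{\tilde g}(t)\rangle.
\]

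The core of the argument is then to apply It\^o's formula to $\langle x(t)-\bar x(t),\, \bar p(t)\rangle$ on $[0,T]$ and take $\mathbb E$. Here $x(\cdot)-\bar x(\cdot)$ solves the mean-field SDE obtained from \eqref{eq:4.2} with drift $(b-\tilde gh)(t)-(\bar b-\bar{\tilde g}\bar h)(t)$, diffusion $g(t)-\bar g(t)$ against $dW$, and $\tilde g(t)-\bar{\tilde g}(t)$ against $dY$; and $\bar p(\cdot)$ solves the mean-field BSDE \eqref{eq:4.4} with generator $-[\bar H_x(t)+\mathbb E[\bar H_y(t)]]$ and terminal value $\bar m_x(T)+\mathbb E[\bar m_y(T)]$. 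The cross terms produced by It\^o's formula reproduce exactly $\mathbb E\int_0^T \langle \bar p(t),\, (b-\tilde gh)(t)-(\bar b-\bar{\tilde g}\bar h)(t)\rangle\,dt$, $\mathbb E\int_0^T \langle \bar q(t),\, g(t)-\bar g(t)\rangle\,dt$, and $\mathbb E\int_0^T \langle \bar{\tilde q}(t),\, \tilde g(t)-\bar{\tilde g}(t)\rangle\,dt$, while the drift of $\bar p$ contributes $-\mathbb E\int_0^T \langle x(t)-\bar x(t),\, \bar H_x(t)+\mathbb E[\bar H_y(t)]\rangle\,dt$, and the terminal/initial evaluation gives $\mathbb E\langle x(T)-\bar x(T),\, \bar m_x(T)+\mathbb E[\bar m_y(T)]\rangle$ (since $x(0)-\bar x(0)=0$). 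Substituting this identity back into the expression for $J(u(\cdot))-J(\bar u(\cdot))$ cancels the three $\langle\bar p,\cdot\rangle$, $\langle\bar q,\cdot\rangle$, $\langle\bar{\tilde q},\cdot\rangle$ integrals, leaving precisely the right-hand side of \eqref{eq:4.6} (modulo rewriting $m(T)-\bar m(T)$ and $\langle x(T)-\bar x(T),\ldots\rangle$ together).

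One point requiring care is the handling of the mean-field terms $\mathbb E[x(t)]$, $\mathbb E[u(t)]$: when It\^o's formula is applied and expectations taken, the term $\langle x(t)-\bar x(t),\, \mathbb E[\bar H_y(t)]\rangle$ arises from pairing the $x$-increment against the $\mathbb E[\bar H_y(t)]$ piece of the BSDE generator, and one uses $\mathbb E\big[\langle x(t)-\bar x(t),\, \mathbb E[\bar H_y(t)]\rangle\big] = \langle \mathbb E[x(t)]-\mathbb E[\bar x(t)],\, \mathbb E[\bar H_y(t)]\rangle$, which is consistent with how $\bar H_y$ enters. The integrability needed to justify the stochastic-integral terms having zero expectation follows from Assumptions \ref{ass:1.1}, \ref{ass:1.2} together with the estimates of Lemma \ref{lem:3.3} ($x,\bar x \in S^2_{\mathscr F}$) and the fact that $\bar p\in S^2_{\mathscr F}$, $\bar q,\bar{\tilde q}\in M^2_{\mathscr F}$ — this is routine. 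The main obstacle, such as it is, is purely bookkeeping: tracking all cross terms from It\^o's formula and matching the mean-field pieces correctly so that the cancellation is exact; there is no essential analytic difficulty beyond what is already handled in Lemma \ref{lem:2.3}.
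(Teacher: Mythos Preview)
Your proposal is correct and follows essentially the same approach as the paper: the paper's own proof simply states that \eqref{eq:4.6} is obtained ``similar to the proof of Lemma~\ref{lem:2.3}'' by using the definitions of the Hamiltonian \eqref{eq:4.3} and cost functional \eqref{eq:1.10} and applying It\^o's formula to $\langle x(t)-\bar x(t),\bar p(t)\rangle$ followed by taking expectation under $\mathbb P$, omitting the concrete calculation. Your write-up is in fact more detailed than the paper's, but the method is identical.
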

\begin{proof}
  Similar  to the  proof of
  Lemma \ref{lem:2.3},
  \eqref{eq:4.6} can be obtained
  by  using the definition of  the
  Hamiltonian function
  $H$ (see \eqref{eq:4.3}) and the cost functional $J(u(\cdot))$
    (see \eqref{eq:1.10}) and
    applying It\^{o} formula to
    $\langle x(t)-\bar x(t), \bar p(t) \rangle$
    and then taking expectation under  the
    probability $\mathbb P.$  Since the proof
    is standard, here we omit the concrete
    calculation.  The proof is complete.
\end{proof}

Next we give the sufficient condition of optimality for the
existence of an optimal control of Problem \ref{pro:4.1}.

\begin{thm}{\bf [Sufficient Stochastic Maximum Principle I] } \label{thm:4.3}

 Let Assumptions \ref{ass:1.1} and
\ref{ass:1.2} be
satisfied. Let $(\bar u (\cdot),
\bar x (\cdot))$  be an admissible pair associated
with the adjoint process $(\bar p(\cdot), \bar q(\cdot), \bar{\tilde q}(\cdot)).$ Suppose that

\begin{enumerate}
\item $H( t, x, y, u, v, {\bar p} (t), {\bar q} (t), \bar {\tilde q}(t) )$ is convex in $( x, y, u, v )$,
\item $m (x,y)$ is convex in $(x,y)$,
\item  For  any $u(\cdot)\in U_{ad}^S,$
\begin{eqnarray} \label{eq:4.8}
  \mathbb E\bigg[\langle  u (t) - \bar u (t), \bar H_u(t)+\mathbb E [\bar H_v(t)]\rangle\bigg]
\geq 0.
\end{eqnarray}
\end{enumerate}
Then $(\bar u (\cdot),  \bar x (\cdot))$ is an optimal pair of
Problem \ref{pro:4.1}.
\end{thm}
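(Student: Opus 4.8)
The plan is to prove that $(\bar u(\cdot), \bar x(\cdot))$ is optimal by showing $J(u(\cdot)) - J(\bar u(\cdot)) \geq 0$ for every admissible pair $(u(\cdot), x(\cdot))$. The starting point is the exact representation formula \eqref{eq:4.6} from Lemma \ref{lem:4.2}, which expresses the cost difference entirely in terms of the Hamiltonian $H$, the terminal cost $m$, and the first-order ``defect'' terms involving $\bar H_x$, $\mathbb E[\bar H_y]$, $\bar m_x$ and $\mathbb E[\bar m_y]$. So the whole argument reduces to estimating the right-hand side of \eqref{eq:4.6} from below by zero.

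The key steps, in order, are as follows. First I would invoke the convexity of $m$ in $(x,y)$ (hypothesis 2): the gradient inequality gives
\begin{eqnarray*}
m(T) - \bar m(T) \geq \langle \bar m_x(T), x(T)-\bar x(T)\rangle + \langle \bar m_y(T), \mathbb E[x(T)]-\mathbb E[\bar x(T)]\rangle,
\end{eqnarray*}
and after taking expectation the second inner product can be rewritten, using that $\mathbb E[\langle \bar m_y(T), X\rangle] = \langle \mathbb E[\bar m_y(T)], \mathbb E[X]\rangle$ for $X = x(T)-\bar x(T)$ plus a mean-zero remainder that integrates away, so that
\begin{eqnarray*}
\mathbb E\big[m(T)-\bar m(T)\big] - \mathbb E\big[\langle x(T)-\bar x(T), \bar m_x(T) + \mathbb E[\bar m_y(T)]\rangle\big] \geq 0.
\end{eqnarray*}
This shows the terminal line of \eqref{eq:4.6} is nonnegative. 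Second, by the same device applied pointwise in $t$ to the convex function $H(t,\cdot,\cdot,\cdot,\cdot,\bar p(t),\bar q(t),\bar{\tilde q}(t))$ (hypothesis 1), the gradient inequality in all four variables $(x,y,u,v)$ yields
\begin{eqnarray*}
H(t) - \bar H(t) \geq \langle \bar H_x(t), x(t)-\bar x(t)\rangle + \langle \bar H_y(t), \mathbb E[x(t)]-\mathbb E[\bar x(t)]\rangle + \langle \bar H_u(t), u(t)-\bar u(t)\rangle + \langle \bar H_v(t), \mathbb E[u(t)]-\mathbb E[\bar u(t)]\rangle.
\end{eqnarray*}
Integrating over $[0,T]$, taking expectation, and again converting the mean-field inner products ($\bar H_y$ paired with $x(t)-\bar x(t)$, and $\bar H_v$ paired with $u(t)-\bar u(t)$) into $\mathbb E[\bar H_y]$ and $\mathbb E[\bar H_v]$ paired against the same differences, I get that the integrand of \eqref{eq:4.6} dominates $\mathbb E\int_0^T \langle u(t)-\bar u(t), \bar H_u(t) + \mathbb E[\bar H_v(t)]\rangle \, dt$. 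Third, this last quantity is $\geq 0$ by the variational inequality \eqref{eq:4.8} in hypothesis 3 (integrated in $t$). Combining the three nonnegativity facts with \eqref{eq:4.6} gives $J(u(\cdot)) - J(\bar u(\cdot)) \geq 0$, hence optimality.

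The main obstacle I anticipate is the careful bookkeeping of the mean-field terms — verifying that $\mathbb E[\langle \bar H_y(t), \mathbb E[x(t)]-\mathbb E[\bar x(t)]\rangle] = \mathbb E[\langle x(t)-\bar x(t), \mathbb E[\bar H_y(t)]\rangle]$ and likewise for the $\bar H_v$ and $\bar m_y$ terms, so that the convexity lower bounds line up \emph{exactly} with the defect terms appearing in \eqref{eq:4.6} rather than merely resembling them. This is the Fubini/tower-property step that distinguishes the mean-field case from the classical one; once the pairings are matched, everything collapses. A secondary (routine) point is justifying the integrability needed to apply Fubini and to interchange expectation and integration, which follows from Assumptions \ref{ass:1.1}--\ref{ass:1.2}, the estimate \eqref{eq:1.8} with $\beta=2$, and the $S^2_{\mathscr F}\times M^2_{\mathscr F}$ regularity of the adjoint triple $(\bar p, \bar q, \bar{\tilde q})$.
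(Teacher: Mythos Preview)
Your proposal is correct and follows essentially the same approach as the paper: start from the representation \eqref{eq:4.6}, use convexity of $m$ and of $H$ together with the mean-field identity $\mathbb E[\langle \bar H_y(t), \mathbb E[x(t)-\bar x(t)]\rangle] = \mathbb E[\langle x(t)-\bar x(t), \mathbb E[\bar H_y(t)]\rangle]$ (and its analogues) to bound the terminal and running terms, and then invoke condition~3 to control the residual $u$-variation term. The paper's proof is slightly more compressed in that it merges the convexity-of-$H$ step and the use of \eqref{eq:4.8} into a single inequality, but the logic is identical.
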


\begin{proof}
Let $(u (\cdot), x (\cdot))$ be an arbitrary admissible pair. In view of
Lemma \ref{lem:4.2},  we have
\begin{eqnarray}\label{eq:4.9}
&&J (u (\cdot)) - J (\bar u(\cdot))\nonumber
\\ &=& {\mathbb E}\bigg[
\int_0^T \Big ( { H}(t)- \bar H(t)-
\big < x (t) - \bar
x (t), {\bar H}_x (t)+
{\mathbb E} [{ \bar H} _y(t)]
\big>\nonumber \Big)dt\bigg]\\
&&+{\mathbb E}\big [ m (T) - \bar m (T) - \left <
x (T) - \bar x (T), \bar m_x (T) + {\mathbb E}  [\bar
m_{\overline y} (T)]  \right
> \big ].
\end{eqnarray}
 The condition 1 and 3
 lead to
\begin{eqnarray}\label{eq:3.80}
\begin{split}
\mathbb E[H(t)-\bar H(t) ]\geq &
\mathbb E\bigg[  \langle
x(t)-\bar x(t),\bar H_x(t)+\mathbb E[\bar H_y(t)]\rangle
+  \langle
u(t)-\bar u(t),\bar H_u(t)  +\mathbb E[\bar H_v(t)]
\rangle\bigg]\\
\geq &
\mathbb E\bigg[  \langle
x(t)-\bar x(t),\bar H_x(t)+\mathbb E[\bar \bar H_y(t)]\bigg].
\end{split}
\end{eqnarray}
The condition 2 arrives at
\begin{eqnarray}\label{eq:5.4}
  \mathbb E[m (T) - \bar m (T)] \geq
  \mathbb E\big[\langle x (T) -
\bar x (T), \bar m_x (T)+\mathbb E[\bar m_y (T)]  \rangle\big].
\end{eqnarray}
Putting \eqref{eq:3.80} and  \eqref{eq:5.4} into \eqref{eq:4.9}, we get
\begin{eqnarray}
J (u (\cdot)) - J (\bar u (\cdot)) \geq 0 \ .
\end{eqnarray}
 Since  $u (\cdot)$ is arbitrary,  $\bar u
(\cdot)$ is an optimal control and thus $(
\bar u (\cdot), \bar x(\cdot))$ is an optimal pair. The proof is complete.
\end{proof}

The convexity condition of  $m$ is sometimes too strong  to hold which may limit the applicability of our sufficient maximum principle. To
overcome this limitation, we note that the proof of Theorem \ref{thm:4.3} still holds as long as the terminal cost  $m$ is convex in
an expected sense. Therefore, weakening the convexity of the $m$, we provide the following  corollary of Theorem \ref{thm:4.3} as  the second sufficient maximum principle.

\begin{cor}\label{cor1}{\bf [Sufficient Stochastic Maximum principle II]}
Let  Assumption \ref{ass:1.1} and \ref{ass:1.2} be satisfied. Let $(\bar u (\cdot),
\bar x (\cdot))$  be an admissible pair associated
with the adjoint process $(\bar p(\cdot), \bar q(\cdot), \bar{\tilde q}(\cdot)).$ Suppose that
\begin{enumerate}
 \item $H( t, x, y, u, v, {\bar p} (t), {\bar q} (t), \bar {\tilde q}(t) )$ is convex in $( x, y, u, v )$,
\item  For any random variables $X_1,  X_2, \in L^2 (\Omega,{\mathscr{F}},P; \mathbb R^n),$
\begin{eqnarray*}
 {\mathbb E} \big [ m (X_1, \mathbb E [X_1]) - m (X_2, \mathbb E [X_2])\big ]  \geq {\mathbb E} \big [  \langle X_1 - X_2 ,
 m_x (X_2, \mathbb E [X_2]) + {\mathbb E} [
 m_y (X_2, \mathbb E [X_2]) \rangle  \big
]  \ ,
\end{eqnarray*}
\item  For  any $u(\cdot)\in U_{ad}^S,$
\begin{eqnarray} \label{eq:4.13}
  \mathbb E\bigg[\langle  u (t) - \bar u (t), \bar H_u(t)+\mathbb E [\bar H_v(t)]\rangle\bigg]
\geq 0,
\end{eqnarray}
\end{enumerate}
then $\bar u (\cdot)$ is an optimal control  and $\bar
x(\cdot)$ is the corresponding  optimal state.
\end{cor}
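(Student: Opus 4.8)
The plan is to imitate the proof of Theorem \ref{thm:4.3} line for line, and to replace the single step \eqref{eq:5.4} — where the pointwise convexity of $m$ was used — by the weaker hypothesis~2, which is exactly the inequality in the form needed.

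First I would fix an arbitrary admissible pair $(u(\cdot),x(\cdot))$ of Problem \ref{pro:4.1} and invoke Lemma \ref{lem:4.2} to write, with the notation \eqref{eq:3.50},
\[
J(u(\cdot)) - J(\bar u(\cdot)) = \mathbb{E}\int_0^T\Big(H(t) - \bar H(t) - \langle x(t) - \bar x(t),\, \bar H_x(t) + \mathbb{E}[\bar H_y(t)]\rangle\Big)dt + \mathbb{E}\big[m(T) - \bar m(T) - \langle x(T) - \bar x(T),\, \bar m_x(T) + \mathbb{E}[\bar m_y(T)]\rangle\big].
\]
Then, since by hypothesis~1 the map $(x,y,u,v)\mapsto H(t,x,y,u,v,\bar p(t),\bar q(t),\bar{\tilde q}(t))$ is convex and, by Assumptions \ref{ass:1.1}--\ref{ass:1.2}, continuously differentiable, the gradient inequality gives, $\mathbb{P}$-a.s. and a.e.\ $t$,
\[
H(t) - \bar H(t) \geq \langle x(t) - \bar x(t), \bar H_x(t)\rangle + \langle \mathbb{E}[x(t)] - \mathbb{E}[\bar x(t)], \bar H_y(t)\rangle + \langle u(t) - \bar u(t), \bar H_u(t)\rangle + \langle \mathbb{E}[u(t)] - \mathbb{E}[\bar u(t)], \bar H_v(t)\rangle.
\]
Taking expectations and using the elementary identity $\mathbb{E}\langle \mathbb{E}[x(t)] - \mathbb{E}[\bar x(t)], \bar H_y(t)\rangle = \mathbb{E}\langle x(t) - \bar x(t), \mathbb{E}[\bar H_y(t)]\rangle$ (and likewise for the $v$-term), followed by hypothesis~3, one recovers exactly \eqref{eq:3.80}, namely $\mathbb{E}[H(t) - \bar H(t)] \geq \mathbb{E}\langle x(t) - \bar x(t), \bar H_x(t) + \mathbb{E}[\bar H_y(t)]\rangle$, which handles the running-cost part of the representation.

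Finally I would apply hypothesis~2 with $X_1 = x(T)$ and $X_2 = \bar x(T)$ — both in $L^2(\Omega,\mathscr{F},\mathbb{P};\mathbb{R}^n)$ by Lemma \ref{lem:3.3} — to get $\mathbb{E}[m(T) - \bar m(T)] \geq \mathbb{E}\langle x(T) - \bar x(T), \bar m_x(T) + \mathbb{E}[\bar m_y(T)]\rangle$, which plays the role of \eqref{eq:5.4}. Substituting these two lower bounds into the representation yields $J(u(\cdot)) - J(\bar u(\cdot)) \geq 0$, and since $(u(\cdot),x(\cdot))$ was arbitrary, $(\bar u(\cdot),\bar x(\cdot))$ is an optimal pair. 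I do not anticipate any real obstacle: the only point requiring a moment's care is the mean-field Fubini manipulation that turns the expected gradient inequality for $H$ into the form involving $\mathbb{E}[\bar H_y(t)]$ and $\mathbb{E}[\bar H_v(t)]$, and the observation that hypothesis~2 is stated in precisely the averaged form in which the terminal term enters — so that, unlike in Theorem \ref{thm:4.3}, no pointwise convexity of $m$ is ever needed.
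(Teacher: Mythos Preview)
Your proposal is correct and follows exactly the paper's own approach: invoke Lemma \ref{lem:4.2} to get the representation \eqref{eq:4.9}, reuse the convexity argument for $H$ from Theorem \ref{thm:4.3} to obtain \eqref{eq:3.80}, and replace the pointwise-convexity step by applying hypothesis~2 with $X_1=x(T)$, $X_2=\bar x(T)$ to recover \eqref{eq:5.4}. If anything, your write-up is more explicit than the paper's (which simply points back to Theorem \ref{thm:4.3}); the mean-field Fubini swap you flag is indeed the only place requiring care, and you handle it correctly.
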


\begin{proof}
  Let $(u (\cdot), x (\cdot))$ be an arbitrary admissible pair.  From the
  condition 2, we  see that
  \eqref{eq:5.4} holds.
  Moreover, following  the same
  argument as   the proof of Theorem \ref{thm:4.3},
  \eqref{eq:4.9} and \eqref{eq:3.80} also
  hold.  Therefore, Putting \eqref{eq:3.80} and  \eqref{eq:5.4} into \eqref{eq:4.9}, we get
\begin{eqnarray}
J (u (\cdot)) - J (\bar u (\cdot)) \geq 0, \
\end{eqnarray}
which implies that  $\bar u (\cdot)$ is an optimal control  and $\bar
x(\cdot)$ is the corresponding optimal state. The proof is complete.

\end{proof}

\subsection{ Necessary Conditions of Optimality}

In this section we are going to represent
the necessary Pontryagin maximum principle of Problem  \ref{pro:4.1}.   To this end,
we need the following  variation formula.
\begin{thm}\label{them:4.5}
 Let  Assumption \ref{ass:1.1} and \ref{ass:1.2} be satisfied. Let $(\bar u (\cdot),
\bar x (\cdot))$  be an admissible pair associated
with the adjoint process $(\bar p(\cdot), \bar q(\cdot), \bar{\tilde q}(\cdot)).$
Then
\begin{eqnarray}\label{eq:4.15}
&& \frac{d}{d\epsilon} J ( \bar u (\cdot) + \epsilon ( u (\cdot) - \bar u (\cdot) ) ) |_{\epsilon=0} \nonumber \\
&& := \lim_{\epsilon \rightarrow 0^+}
\frac{ J ( \bar u (\cdot) + \epsilon ( u (\cdot) - \bar u (\cdot) ) ) - J( \bar u (\cdot) ) }{\epsilon} \nonumber \\
&& = {\mathbb E} \bigg [ \int_0^T \left < \mathbb E\big[{
\bar H}_u(t)+\mathbb E[\bar H_v(t)], u (t) - \bar u
(t) \right > d t \bigg ] \ .
\end{eqnarray}
where $\eps\in (0,1)$ and $u(\cdot)$
is any given admissible control.
\end{thm}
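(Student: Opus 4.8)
The plan is to follow the scheme used to prove Theorem~\ref{them:3.7}, but carried out entirely on the original probability space $(\Omega,\mathscr F,\mathbb P)$, where the computation is considerably lighter since the auxiliary process $Z(\cdot)$ appears neither in the state dynamics \eqref{eq:4.2} nor in the cost functional \eqref{eq:1.10}. Fix $u(\cdot)\in U_{ad}^S$, set $u^\epsilon(\cdot)=\bar u(\cdot)+\epsilon(u(\cdot)-\bar u(\cdot))$ for $\epsilon\in(0,1)$ (which is admissible because $U$ is convex and \eqref{eq:1.33} is stable under this operation), and let $x^\epsilon(\cdot)$ denote the corresponding solution of \eqref{eq:4.2}.

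First I would apply Lemma~\ref{lem:4.2} with $(u^\epsilon(\cdot),x^\epsilon(\cdot))$ in the role of $(u(\cdot),x(\cdot))$, and then add and subtract the candidate first-order term $\epsilon\,\mathbb E\int_0^T\langle \bar H_u(t)+\mathbb E[\bar H_v(t)],u(t)-\bar u(t)\rangle\,dt$, using $u^\epsilon-\bar u=\epsilon(u-\bar u)$ and the Fubini identity $\mathbb E\langle\mathbb E[\bar H_v(t)],u^\epsilon(t)-\bar u(t)\rangle=\mathbb E\langle \bar H_v(t),\mathbb E[u^\epsilon(t)-\bar u(t)]\rangle$. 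This gives
\[
J(u^\epsilon(\cdot))-J(\bar u(\cdot))=\beta_1^\epsilon+\beta_3^\epsilon+\epsilon\,\mathbb E\int_0^T\big\langle \bar H_u(t)+\mathbb E[\bar H_v(t)],u(t)-\bar u(t)\big\rangle\,dt,
\]
where, with $\delta u:=u^\epsilon-\bar u$ and $H^\epsilon(t):=H(t,x^\epsilon(t),\mathbb E[x^\epsilon(t)],u^\epsilon(t),\mathbb E[u^\epsilon(t)],\bar p(t),\bar q(t),\bar{\tilde q}(t))$,
\[
\beta_1^\epsilon=\mathbb E\int_0^T\Big(H^\epsilon(t)-\bar H(t)-\big\langle x^\epsilon(t)-\bar x(t),\bar H_x(t)+\mathbb E[\bar H_y(t)]\big\rangle-\big\langle \delta u(t),\bar H_u(t)+\mathbb E[\bar H_v(t)]\big\rangle\Big)dt
\]
is the running-cost Taylor remainder and $\beta_3^\epsilon:=\mathbb E\big[m^\epsilon(T)-\bar m(T)-\langle x^\epsilon(T)-\bar x(T),\bar m_x(T)+\mathbb E[\bar m_y(T)]\rangle\big]$ is the terminal-cost remainder. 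It then suffices to show $\beta_1^\epsilon=o(\epsilon)$ and $\beta_3^\epsilon=o(\epsilon)$; dividing by $\epsilon$ and letting $\epsilon\to0^+$ produces exactly \eqref{eq:4.15}.

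For these two bounds I would argue as in \eqref{eq:3.42}--\eqref{eq:3.43}. A first-order Taylor expansion of $H$ in $(x,y,u,v)$ and of $m$ in $(x,y)$ writes $\beta_1^\epsilon$ and $\beta_3^\epsilon$ as sums of terms $\mathbb E\int_0^T\!\int_0^1\langle H_\cdot^{\epsilon,\lambda}(t)-\bar H_\cdot(t),\,\eta(t)\rangle\,d\lambda\,dt$ and $\mathbb E\langle m_\cdot^{\epsilon,\lambda}(T)-\bar m_\cdot(T),\,\eta(T)\rangle$, where $\eta$ is one of $x^\epsilon-\bar x$, $\mathbb E[x^\epsilon-\bar x]$, $\delta u$, $\mathbb E[\delta u]$, and $H_\cdot^{\epsilon,\lambda}$, $m_\cdot^{\epsilon,\lambda}$ denote the relevant partial derivatives evaluated along the segment joining $(\bar x,\mathbb E[\bar x],\bar u,\mathbb E[\bar u])$ to $(x^\epsilon,\mathbb E[x^\epsilon],u^\epsilon,\mathbb E[u^\epsilon])$, with $(\bar p,\bar q,\bar{\tilde q})$ frozen (cf.~the notation in \eqref{eq:3.42}). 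By the $\gamma=2$ case of Lemma~\ref{lem:3.2} (legitimate since $U_{ad}^S\subset M_{\mathscr F}^2(0,T;L^2(0,T;\mathbb R^k))$) together with $\delta u=\epsilon(u-\bar u)$, each factor $\|\eta\|$ in the appropriate $L^2$-norm is $O(\epsilon)$, so by Cauchy--Schwarz it remains to check that the remainder factors $\|H_\cdot^{\epsilon,\lambda}-\bar H_\cdot\|_{L^2(\Omega\times[0,T])}$ and $\|m_\cdot^{\epsilon,\lambda}(T)-\bar m_\cdot(T)\|_{L^2(\Omega)}$ tend to $0$ as $\epsilon\to0$, uniformly in $\lambda\in[0,1]$. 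This is the dominated convergence theorem: under Assumption~\ref{ass:1.1} the coefficients $b_x,g_x,\tilde g_x,h_x,h$ are bounded and $(\bar p,\bar q,\bar{\tilde q})\in M_{\mathscr F}^2$, while under Assumption~\ref{ass:1.2} $l_x,l_u,m_x$ grow at most linearly; hence, using the uniform moment bound $\sup_{0\le\epsilon\le1}\mathbb E[\sup_{t\in{\cal T}}|x^\epsilon(t)|^2]<\infty$ from \eqref{eq:1.8}, the squared remainder factors are dominated by a fixed $\mathbb P\otimes dt$-integrable function, and the convergence $x^\epsilon\to\bar x$ in $S_{\mathscr F}^2$, $u^\epsilon\to\bar u$ in $M_{\mathscr F}^2$ (so also in $\mathbb P\otimes dt$-measure) forces the integrands to $0$ in measure by continuity of the derivatives. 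Thus $\beta_1^\epsilon,\beta_3^\epsilon=o(\epsilon)$, completing the proof.

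The main obstacle is exactly this last verification: producing an honest $\mathbb P\otimes dt$-integrable majorant for the Taylor-remainder factors, balancing the boundedness of $b_x,g_x,\tilde g_x,h$ against the mere square-integrability of the adjoint triple $(\bar p,\bar q,\bar{\tilde q})$ and the linear growth of $l_x,l_u$, and then feeding in the uniform-in-$\epsilon$ estimate \eqref{eq:1.8}. Worth emphasizing is that, in contrast to \eqref{eq:3.42} in the weak formulation, one cannot rely on fourth moments here, since admissible controls in $U_{ad}^S$ are only required to satisfy \eqref{eq:1.33} in its $L^2$ form; fortunately the absence of the weight $Z(\cdot)$ in the cost functional \eqref{eq:1.10} makes second moments entirely sufficient.
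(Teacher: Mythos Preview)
Your proposal is correct and follows essentially the same route as the paper: the paper's own proof simply says ``following an argument similar to the proof of Theorem~\ref{them:3.7}, \eqref{eq:4.15} can be obtained by Lemma~\ref{lem:4.2},'' and that is precisely what you carry out---applying Lemma~\ref{lem:4.2} to $(u^\epsilon,x^\epsilon)$, isolating the two Taylor remainders $\beta_1^\epsilon,\beta_3^\epsilon$ (the $Z$-related terms $\beta_2^\epsilon,\beta_4^\epsilon,\beta_5^\epsilon,\beta_6^\epsilon$ of the weak case being absent), and disposing of them by the $\gamma=2$ estimate plus dominated convergence. Your remark that only second moments are needed (and available) here, in contrast to the fourth moments used in \eqref{eq:3.42}, is exactly the right adaptation.
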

\begin{proof}
  Following an
  argument  similar to the proof of Theorem \ref{them:3.7},
  \eqref{eq:4.15} can be obtained by
  Lemma \ref{lem:4.2}. Here we do not
repeat it.  The proof is complete.
\end{proof}

 Then by Theorem \ref{them:4.5}, we get the following the necessary Pontryagin maximum principle of Problem  \ref{pro:4.1}.
\begin{thm} \label{thm:4.6}
Let Assumption \ref{ass:1.1} and \ref{ass:1.2} be satisfied.  Let $(\bar{u}(\cdot ); \bar{x}(\cdot ))$ be an optimal pair of Problem \ref{pro:4.1}
associated
with the adjoint process $(\bar p(\cdot), \bar q(\cdot), \bar{\tilde q}(\cdot)).$
.  Then
the optimality condition
\begin{align} \label{eq:4.16}
\Big \langle \mathbb E\big[{
\bar H}_u(t)+\mathbb E[\bar H_v(t)]|\mathscr F_t^Y\big], u-\bar u(t)\Big\rangle  \geq 0
\end{align}%
holds for all $u\in U$  and a.e. $(t,\omega )\in \lbrack 0,T]\times \Omega .$
\end{thm}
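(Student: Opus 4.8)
The plan is to derive Theorem~\ref{thm:4.6} as a direct consequence of the variation formula in Theorem~\ref{them:4.5} together with the optimality of $(\bar u(\cdot),\bar x(\cdot))$ and the fact that every admissible control is $\{\mathscr F^Y_t\}$-adapted. First I would fix an arbitrary $u\in U$ (a deterministic point of the control domain) and an arbitrary $t_0\in[0,T]$, and use a standard spike-in-space / freezing argument: for a measurable set $A\in\mathscr F^Y_{t_0}$ define the perturbed admissible control $u^A(s,\omega):=\bar u(s,\omega)+\mathbf 1_{[t_0,t_0+\delta]}(s)\mathbf 1_A(\omega)\,(u-\bar u(s,\omega))$. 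Since $U$ is convex and $u^A$ is $\{\mathscr F^Y_t\}$-adapted and square-integrable, $u^A\in U_{ad}^S$. Because $\bar u(\cdot)$ is optimal, $J(\bar u(\cdot)+\epsilon(u^A(\cdot)-\bar u(\cdot)))\ge J(\bar u(\cdot))$ for all $\epsilon\in(0,1]$, hence the one-sided derivative at $\epsilon=0$ is nonnegative; by Theorem~\ref{them:4.5} this reads
\begin{eqnarray*}
{\mathbb E}\bigg[\int_{t_0}^{t_0+\delta}\mathbf 1_A\,\big\langle \mathbb E[\bar H_u(t)]+\mathbb E[\bar H_v(t)],\,u-\bar u(t)\big\rangle\,dt\bigg]\ge 0.
\end{eqnarray*}

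Next I would convert this integrated inequality into the pointwise conditional-expectation statement. Conditioning on $\mathscr F^Y_t$ inside the integral (using that $\mathbf 1_A$ is $\mathscr F^Y_{t_0}\subseteq\mathscr F^Y_t$-measurable for $t\ge t_0$, and the tower property), the last display is equivalent to
\begin{eqnarray*}
{\mathbb E}\bigg[\int_{t_0}^{t_0+\delta}\mathbf 1_A\,\big\langle \mathbb E\big[\bar H_u(t)+\bar H_v(t)\,\big|\,\mathscr F^Y_t\big],\,u-\bar u(t)\big\rangle\,dt\bigg]\ge 0
\end{eqnarray*}
(here $\bar u(t)$ is itself $\mathscr F^Y_t$-measurable, so it may be pulled out of or left inside the conditional expectation as convenient; note $\mathbb E[\bar H_v(t)]$ is deterministic, hence unaffected). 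Dividing by $\delta$, letting $\delta\downarrow 0$, and invoking the Lebesgue differentiation theorem for a.e.\ $t_0$, then letting $A$ range over $\mathscr F^Y_{t_0}$, I obtain that for a.e.\ $t$, $\mathbb P$-a.s.,
\begin{eqnarray*}
\big\langle \mathbb E\big[\bar H_u(t)+\mathbb E[\bar H_v(t)]\,\big|\,\mathscr F^Y_t\big],\,u-\bar u(t)\big\rangle\ge 0.
\end{eqnarray*}
Finally, since $U$ is separable (a convex subset of $\mathbb R^k$), one runs $u$ over a countable dense subset of $U$ and uses continuity in $u$ to upgrade to: the inequality holds simultaneously for all $u\in U$, a.e.\ $(t,\omega)$, which is exactly \eqref{eq:4.16}.

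The genuinely delicate points, rather than the algebra, are two. First, Theorem~\ref{them:4.5} must be applicable with the particular perturbation directions $u^A(\cdot)-\bar u(\cdot)=\mathbf 1_{[t_0,t_0+\delta]}\mathbf 1_A(u-\bar u(\cdot))$; this is fine because the statement of Theorem~\ref{them:4.5} allows \emph{any} admissible $u(\cdot)$, and $u^A\in U_{ad}^S$. Second, the exchange of the limit $\delta\downarrow 0$ with the expectation (i.e.\ the measure-theoretic justification of the Lebesgue-point argument in the presence of the expectation) requires that $t\mapsto\mathbb E\big[\,|\mathbb E[\bar H_u(t)+\mathbb E[\bar H_v(t)]\mid\mathscr F^Y_t]|\cdot|u-\bar u(t)|\,\big]$ be integrable on $[0,T]$; this follows from Assumptions~\ref{ass:1.1}--\ref{ass:1.2} (linear growth of $H_u,H_v$), the integrability of $\bar x(\cdot)\in S^2_{\mathscr F}$, $\bar p(\cdot)\in S^2_{\mathscr F}$, $\bar q(\cdot),\bar{\tilde q}(\cdot)\in M^2_{\mathscr F}$, and $\bar u(\cdot),u(\cdot)\in M^2_{\mathscr F}$, via Cauchy--Schwarz and conditional Jensen. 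I expect the main obstacle to be writing this last integrability/differentiation step cleanly; everything else is a routine convex-variation argument identical in spirit to the proof of Theorem~\ref{thm:3.4}.
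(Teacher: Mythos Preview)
Your proposal is correct and follows essentially the same route as the paper: apply the variation formula of Theorem~\ref{them:4.5} to an admissible perturbation, use optimality to get nonnegativity, and then pass from the integrated inequality to the pointwise conditional one via $\mathscr F^Y_t$-adaptedness of admissible controls. The paper's proof is extremely terse---it simply writes the integrated inequality with the conditional expectation inserted and asserts ``which imply that \eqref{eq:4.16} holds''---whereas you spell out the localization step explicitly (spike-type perturbations $\mathbf 1_{[t_0,t_0+\delta]}\mathbf 1_A(u-\bar u)$, Lebesgue differentiation in $t$, and a density argument over a countable subset of $U$). That extra detail is standard and exactly what is needed to justify the step the paper skips; there is no genuine difference in strategy.

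Two minor cosmetic points: in your first display the integrand should read $\langle \bar H_u(t)+\mathbb E[\bar H_v(t)],\,u-\bar u(t)\rangle$ (no outer expectation on $\bar H_u$), and in your second display the term inside the conditional expectation should be $\bar H_u(t)+\mathbb E[\bar H_v(t)]$ rather than $\bar H_u(t)+\bar H_v(t)$; you already note in the parenthetical that $\mathbb E[\bar H_v(t)]$ is deterministic, so the intended meaning is clear.
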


\begin{proof}
Since all admissible controls
are $\{\mathscr F^Y_t\}_{t\in \cal T}$-adapted
processes, from the property of conditional
expectation, Theorem \ref{them:4.5} and the optimality of $\bar u(\cdot)$,  we
deduce that
\begin{eqnarray*}
&& {\mathbb E} \bigg [ \int_0^T
 \langle {\mathbb E} [{ \bar  H}_u (t)+\mathbb E[\bar H_v(t)] | {\mathscr F}_t ^Y] ,
u (t) - \bar u (t) \rangle  d t \bigg ] \\
&& = {\mathbb E} \bigg[\int_0^T \langle { \bar  H}_u (t)+\mathbb E[\bar H_v(t)]
, u (t) - \bar u (t) \rangle \d t \bigg ] \\
&& = \lim_{\epsilon \rightarrow 0} \frac{J( \bar u (\cdot) + \epsilon (
u (\cdot) - \bar u (\cdot) ) ) - J ( \bar u (\cdot) )}{\epsilon} \geq 0, \
\end{eqnarray*}
which imply that \eqref{eq:4.16}
holds. The proof is complete.
\end{proof}

\section{Application}

In this section, we apply
our  stochastic maximum  principle to
solve  a  partial observed  stochastic  linear quadratic (LQ)
optimal control problem. Let us make it more precise below.
In this case, we assume the  state system is the
following linear mean-field  SDE
\begin{equation}\label{eq:5.1}
\left\{\begin {array}{ll}
  dX(t)=&(A_1(t)X(t)+A_2(t)\mathbb E [X(t)]
  +B_1(t)u(t)+B_2(t)\mathbb E [u(t)])dt
  \\&+(C_1(t)X(t)
  + C_2(t)\mathbb E [X(t)]
  +D_1(t)u(t)+D_2(t)\mathbb E [u(t)])dW(t)
  \\&+(F_1(t)X(t)
  +F_2(t)\mathbb E [X(t)]
  +G_1(t)u(t)+G_2(t)\mathbb E [u(t)])dW^u(t),
   \\x(0)=&x \in \mathbb R^n,
\end {array}
\right.
\end{equation}
with an observation

\begin{equation}\label{eq:5.2}
\displaystyle\left\{
\begin{array}{lll}
dY(t) & = & h(t)dt+dW^u(t),  \\_{}
\displaystyle Y(0) & = & 0,%
\end{array}
\right.
\end{equation}
and the cost functional  has the following
quadratic form:

 \begin{eqnarray}\label{eq:5.3}
\begin{split}
J ( u (\cdot) ) = &{\mathbb E} [
 \langle M_1 X (T), X(T) \rangle ]+{\mathbb E} [ \langle  M_2  \mathbb E[X (T)],
 \mathbb E[X (T)] \rangle ]
\\&+{\mathbb E} \bigg [ \int_0^T \langle Q_1 (s) X (s), X
(s) ) d s \bigg ]+ {\mathbb E} \bigg [ \int_0^T \langle Q_2 (s)  \mathbb E[X (s)],\mathbb E [X(s)] \rangle d s \bigg ]
\\&+ {\mathbb E} \bigg [ \int_0^T
 \langle N_1 (s) u (s), u (s) \rangle d s\bigg]+ {\mathbb E} \bigg [ \int_0^T \langle N_2 (s)
\mathbb E[u (s)], \mathbb E[u (s)] \rangle d s
\bigg ].
\end{split}
\end{eqnarray}
 In this case, our  control process  $u(\cdot)$  is said to be  an admissible  stochastic
process if  $u(\cdot)\in
M_{{\mathscr F}^{Y}}^2(0,T; \mathbb R^k).$  The set of all admissible controls
is also denoted by $U_{ad}^S.$ Note that
there is no constraint on our control process,
since it takes value in  $\mathbb R^k.$  Now
we make the basic assumptions on
the coefficients.

\begin{ass}\label{ass:5.1}
 The matrix-valued functions $A_1, A_2, C_1,  C_2, F_1,  F_2,
 Q_1,  Q_2:[0, T]\rightarrow \mathbb R^{n\times n};
B_1, \\B_2,D_1, D_2,  G_1,  G_2,:[0, T]\rightarrow \mathbb R^{n\times k}; N_1, N_2:[0, T]\rightarrow \mathbb R^{k\times k}; h:[0, T]\rightarrow \mathbb R$ are uniformly bounded measurable functions.
$M_1$ and $M_2$ are matrices in $\mathbb R^{n\times n}.$
\end{ass}

\begin{ass}\label{ass:5.2}
 The matrix-valued functions $Q_1, Q_1+ Q_2, N_1, N_1+N_2$ are a.e. nonnegative
matrices, and $M_1, M_1+M_2$ are nonnegative matrices.  Moreover, $N_1,N_1+N_2 $  uniformly positive, i.e. for $\forall u\in \mathbb R^m$ and a.s. $t\in [0, T]$,
$ \langle N_1(t)u, u \rangle \geq \delta \langle u, u\rangle
$ and $ \langle (N_1(t)+ N_2(t))u, u \rangle \geq \delta \langle  u, u\rangle,
$  for some positive constant
$\delta$.
\end{ass}

 Then our partial observed  mean-field
 LQ problem can be stated as follows.

 \begin{pro}\label{pro:5.1}

   Find an admissible control $\bar{u}(\cdot)$ such that
\begin{equation}  \label{eq:b7}
J(\bar{u}(\cdot))=\displaystyle\inf_{u(\cdot)\in U_{ad}^S}J(u(\cdot)),
\end{equation}
   subject to \eqref{eq:5.1},
   \eqref{eq:5.2} and \eqref{eq:5.3}.
 \end{pro}

It is easy to check that  under
Assumptions \ref{ass:5.1} and \ref{ass:5.2},
if we set

\begin{eqnarray}\label{eq:5.5}
  \begin{split}
    b(t,x,y,u,v)=&A_1(t)x+A_2(t)y
  +B_1(t)u+B_2(t)v,
   \\ g(t,x,y,u,v)=&C_1(t)x+C_2(t)y
  +D_1(t)u+D_2(t)v,
  \\\tilde g(t,x,y,u,v)=&
  F_1(t)x+F_2(t)y
  +G_1(t)u+G_2(t)v,\\
  h(t,x,y,u,v)=&h(t),
  m(x,y)=(M_1x, x)+(M_2y, y)
  \\l(t,x,y,u,v)=&(Q_1x, x)+(Q_2y, y)+
  (N_1u, u)+(N_2v, v).
  \end{split}
\end{eqnarray}
Problem \ref{pro:5.1} can be regarded as  a special
case of Problem \ref{pro:4.1}
 and  Assumptions \ref{ass:1.1} and \ref{ass:1.2}  for \eqref{eq:5.5} hold. Thus  Theorem \ref{thm:4.3}
and \ref{thm:4.6} can be applied to solve  Problem \ref{pro:5.1}.  In this case,  the Hamiltonian becomes
\begin{eqnarray} \label{eq:5.6}
\begin{split}
 & H(t,x,y,u,v,p,q,\tilde q)
  \\= &
  \langle  p, A_1(t)x+A_2(t)y
  +B_1(t)u+B_2(t)v- h(t)(F_1(t)x+F_2(t)y
  +G_1(t)u+G_2(t)v ) \rangle
  \\&
    +\langle  q, C_1(t)x+C_2(t)y
  +D_1(t)u+D_2(t)v \rangle
  +\langle  \tilde q, F_1(t)x+F_2(t)y
  +G_1(t)u+G_2(t)v \rangle
  \\& +\langle Q_1x, x\rangle +\langle Q_2y, y
  \rangle +
  \langle N_1u, u\rangle +\langle N_2v, v\rangle.
  \end{split}
\end{eqnarray}
For any admissible pair $(u(\cdot), x(\cdot)),$
the corresponding adjoint equation becomes

 \begin {equation}\label{eq:5.7}
\left\{\begin{array}{lll}
dp(t)&=&-\bigg[(A^\top_1(t)-h(t)F_1^\top(t))p(t)+ (A_2^\top(t)-h(t)F_2^\top(t))\mathbb E [p(t)]+C^{\top}_1(t)q(t)
\\&&+ C^{\top}_2(t)\mathbb E[q(t)]
+ F^{\top}_1(t)\tilde q(t)+F^{\top}_2(t)\mathbb E[\tilde q(t)]+2Q_1(t)X(t)
+2Q_2(t)\mathbb E[X(t)]\bigg]dt
\\&&+q(t)dW(t)+\tilde q(t)dY(t),
 \\p(T)&=&2M_1X(T)+2M_2\mathbb E[X(T)].
\end{array}
  \right.
  \end {equation}
  The following result gives the existence and uniqueness of the optimal control of Problem \ref{pro:5.1}.

\begin{thm}
  Let Assumptions \ref{ass:5.1} and
  \ref{ass:5.2} be satisfied. Then
  Problem \ref{pro:5.1} has a unique
  optimal control.
\end{thm}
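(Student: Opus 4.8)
The plan is to prove existence and uniqueness by the direct method of the calculus of variations on a Hilbert space, exploiting the affine dependence of the state on the control together with the coercivity furnished by Assumption~\ref{ass:5.2}; this is cleaner here than going through the maximum principle, since the Hamiltonian \eqref{eq:5.6} is not jointly convex in $(x,y,u,v)$ (only the ``expected'' convexity holds).

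First I would set up the functional-analytic framework. Let $\mathcal H:=M_{\mathscr F^Y}^2(0,T;\mathbb R^k)=U_{ad}^S$, a Hilbert space (a closed subspace of $M_{\mathscr F}^2(0,T;\mathbb R^k)$). Substituting the observation equation \eqref{eq:5.2} into \eqref{eq:5.1} rewrites the state dynamics as a \emph{linear} mean-field SDE driven by the fixed pair $(W(\cdot),Y(\cdot))$, to which Lemma~\ref{lem:3.3} applies with $\beta=2$: for every $u(\cdot)\in\mathcal H$ there is a unique $X^u(\cdot)\in S_{\mathscr F}^2(0,T;\mathbb R^n)$, and by the stability estimate \eqref{eq:1.15} the map $u(\cdot)\mapsto X^u(\cdot)$ is affine and Lipschitz continuous. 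Writing $X^u=X^0+\ell^u$, where $X^0$ is the solution for $u\equiv 0$ and $u\mapsto\ell^u$ is bounded and linear, the quadratic cost \eqref{eq:5.3} takes the form
\[
J(u(\cdot))=\mathcal Q\big(u(\cdot),u(\cdot)\big)+2\mathcal L\big(u(\cdot)\big)+c,
\]
where $c=J(0)<\infty$, $\mathcal L$ is a bounded linear functional on $\mathcal H$, and $\mathcal Q$ is a bounded symmetric bilinear form on $\mathcal H$; boundedness of $\mathcal Q$ uses the Lipschitz bound for $u\mapsto\ell^u$ together with the uniform bounds on $M_i,Q_i,N_i$ from Assumption~\ref{ass:5.1}.

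Next I would establish that $\mathcal Q$ is coercive. The tool is the elementary mean-field identity: for symmetric matrices $S_1,S_2$ and $\xi\in L^2(\Omega,\mathscr F,\mathbb P;\mathbb R^n)$,
\[
\mathbb E\big[\langle S_1\xi,\xi\rangle\big]+\langle S_2\,\mathbb E[\xi],\mathbb E[\xi]\rangle=\mathbb E\big[\langle S_1(\xi-\mathbb E[\xi]),\xi-\mathbb E[\xi]\rangle\big]+\langle(S_1+S_2)\,\mathbb E[\xi],\mathbb E[\xi]\rangle .
\]
Applying it to $(M_1,M_2,\ell^u(T))$, to $(Q_1(t),Q_2(t),\ell^u(t))$ and to $(N_1(t),N_2(t),u(t))$, and invoking Assumption~\ref{ass:5.2}: the $M$- and $Q$-contributions are nonnegative, while the control term satisfies $\mathbb E[\langle N_1(t)u(t),u(t)\rangle+\langle N_2(t)\mathbb E[u(t)],\mathbb E[u(t)]\rangle]\ge\delta\,\mathbb E[|u(t)|^2]$; integrating over $t\in[0,T]$ yields $\mathcal Q(u,u)\ge\delta\|u\|_{\mathcal H}^2$. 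Hence $J$ is strictly convex and coercive on $\mathcal H$, and being a continuous quadratic functional it is weakly lower semicontinuous.

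Finally I would conclude: a strictly convex, coercive, weakly lower semicontinuous functional on the Hilbert space $\mathcal H$ attains its infimum at exactly one point $\bar u(\cdot)$. Equivalently, by the Lax--Milgram theorem the bounded symmetric coercive form $\mathcal Q$ makes the variational equation $\mathcal Q(\bar u,v)+\mathcal L(v)=0$ for all $v\in\mathcal H$ uniquely solvable, and its solution is the unique optimal control. The only point requiring genuine care is the verification that $u\mapsto X^u$ is continuous and that the induced $\mathcal Q$ is both bounded and coercive: this is precisely where Lemma~\ref{lem:3.3} (hence the growth hypotheses of Assumption~\ref{ass:5.1}) and the positivity of $N_1,\,N_1+N_2$ together with the sign conditions on $Q_1,\,Q_1+Q_2,\,M_1,\,M_1+M_2$ in Assumption~\ref{ass:5.2} are used; the remaining manipulations are routine. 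Once coercivity and convexity are in hand, the necessary condition of Theorem~\ref{thm:4.6} also becomes sufficient and identifies the same $\bar u(\cdot)$, which is what the subsequent dual and feedback characterizations will exploit.
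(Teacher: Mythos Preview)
Your proposal is correct and follows essentially the same route as the paper: both arguments set the problem in the Hilbert space $U_{ad}^S=M_{\mathscr F^Y}^2(0,T;\mathbb R^k)$, use the continuity/affineness of $u\mapsto X^u$ (from Lemma~\ref{lem:3.3}) together with the mean-field splitting $\mathbb E[\langle S_1\xi,\xi\rangle]+\langle S_2\mathbb E[\xi],\mathbb E[\xi]\rangle=\mathbb E[\langle S_1(\xi-\mathbb E[\xi]),\xi-\mathbb E[\xi]\rangle]+\langle(S_1+S_2)\mathbb E[\xi],\mathbb E[\xi]\rangle$ to obtain strict convexity, coercivity, and lower semicontinuity of $J$, and then conclude existence and uniqueness on a reflexive space. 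The only cosmetic difference is that you isolate the bilinear part $\mathcal Q$ via the decomposition $X^u=X^0+\ell^u$ and phrase the conclusion through Lax--Milgram, whereas the paper works directly with $J$ and cites Proposition~2.12 of Ekeland--T\'emam; the underlying estimates and use of Assumptions~\ref{ass:5.1}--\ref{ass:5.2} are identical.
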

\begin{proof}

Since  the admissible control set $U_{ad}^S=
M_{{\mathscr F}_{Y}}^2(0,T; \mathbb R^k)$  is
a Hilbert space, thus a reflexive Banach  space,
to prove the existence and uniqueness of the optimal control,  by the classic optimality principle
 (see Proposition 2.12 of Ekeland and T\'emam (1976)), it needs only to prove
 that over $U_{ad}^S,$
the cost functional $J(u(\cdot))$
is the strictly convex, coercive and  lower-semi continuous. Indeed, by the a priori
estimate \eqref{eq:1.8} and \eqref{eq:1.15}, over
$U_{ad}^S,$ we can show that the cost functional
$J ( u (\cdot) )$ is continuous and hence lower-semi continuous. On the other hand, since the weighting matrices in the cost
functional are not random,  from the
definition of  $J ( u (\cdot) )$ (see \eqref{eq:5.3}) and by a simple calculation, we can get that
  \begin{eqnarray}\label{eq:2.4}
\begin{split}
 J( u(\cdot))=& \displaystyle \mathbb E\bigg[\int_0^T\bigg(\langle Q_1(t)(X(t)-\mathbb E[X(t)]),
X(t)-\mathbb E[X(t)])\rangle + \langle  (Q_1+{
Q}_2)(t)\mathbb E[X(t)], \mathbb E[X(t)]\rangle
 \\&+\langle N_1(t)(u(t)-\mathbb E[u(t)]), u(t)-\mathbb E [u(t)]\rangle
 +\langle  (N_1(t)+
{N}_2(t))\mathbb E[u(t)], \mathbb E[u(t)]\rangle \bigg )dt\bigg]
\\&+\mathbb E\bigg[\langle
M_1(X(T)-\mathbb E [X(T)], X(T)-\mathbb E [X(T)]\rangle +\langle  (M_1+{ M}_2)\mathbb E[X(T)], \mathbb
E[X(T)]\rangle \bigg].
\end{split}
\end{eqnarray}
Thus  the cost functional $J(u(\cdot))$ over
 $U_{ad}^S$ is convex from the nonnegativity of the
 $N_1, N_1+N_2, Q_1, Q_1+ Q_2, M_1, M_1+ M_2 $. Actually, since
 $N_1 $ and  $N_1+N_2$ are uniformly positive, $J(u(\cdot))$ is strictly
convex. Furthermore, it follows from the
nonnegativity of $M_1, M_1+M_2$ and $Q_1, Q_1+Q_2$ and the
uniformly strictly positivity  of $N_1, N_1+N_2$, that
\begin{eqnarray}\label{eq:2.5}
  \begin{split}
    J(u(\cdot)) \geq& \mathbb E\bigg[\int_0^T
    \bigg (\langle N(t)(u(t)-\mathbb E[u(t)]), u(t)-\mathbb E[u(t)]\rangle
 +\langle  (N(t)+\bar
{N}(t))\mathbb E[u(t)], \mathbb E[u(t)]\rangle \bigg )dt\bigg]
\\ \geq &\delta \mathbb E\bigg[\int_0^T\langle u(t)-\mathbb E[u(t)], u(t)-\mathbb E[u(t)]\rangle dt\bigg]+
 \delta\mathbb E\bigg[\int_0^T \langle \mathbb E[u(t)], \mathbb E[u(t)]\rangle dt\bigg]
 \\=& \delta\mathbb E \bigg[\int_0^T |u(t)|^2dt\bigg]
\\=&\delta ||u(\cdot)||^2_{U_{ad}^S},
  \end{split}
\end{eqnarray}
which implies that $J ( u (\cdot) )$ is coercive, i.e.,
\begin{eqnarray*}
\lim_{ \| u (\cdot) \|_{U_{ad}^S} {\rightarrow \infty} } J ( u (\cdot) ) = \infty .
\end{eqnarray*}
In summary, the cost functional $J(u(\cdot))$ is strictly convex, coercive, lower-semi continuous  over the reflexive Banach space  $U_{ad}^S.$
The proof is complete.
\end{proof}

In the following,  applying the maximum
principle to our LQ problem, we give the dual presentation of the optimal control in terms
of the corresponding adjoint process.

\begin{thm}\label{thm:5.3}
Let Assumptions \ref{ass:5.1} and
  \ref{ass:5.2} be satisfied.
 Then, a necessary and
sufficient condition for an admissible pair $(u(\cdot); x(\cdot))$ to be an optimal pair of  Problem \ref{pro:5.1}  is that the admissible
control $u(\cdot)$ satisfies
\begin{eqnarray} \label{eq:5.9}
  \begin{split}
 &2N_1(t)u(t)+2N_2(t)\mathbb E[u(t)]+
 (B^\top_1(t)-h(t)G_1^\top(t))
 \mathbb E[p(t)|\mathscr F^{Y}_{t}]+(B^\top_2(t)
 -h(t)G^\top_2(t))\mathbb E [p(t)]
 \\&~~~~~~+ D^{\top}_1(t)\mathbb E[q(t)|\mathscr F^{Y}_{t}]
 +D^{\top}_2(t) \mathbb E [q(t)]=0, \quad a.e. a.s.,
     \end{split}
\end{eqnarray}
where $(p(\cdot),q(\cdot), \tilde q(\cdot)) $ is the
solution to the adjoint equation \eqref{eq:5.7}
corresponding to $(u(\cdot), X(\cdot))$.
\end{thm}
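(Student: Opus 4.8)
The plan is to derive Theorem~\ref{thm:5.3} directly from the maximum principles of Section~3: via the identifications \eqref{eq:5.5}, Problem~\ref{pro:5.1} is the special case of Problem~\ref{pro:4.1} for which Assumptions~\ref{ass:1.1} and \ref{ass:1.2} hold and \eqref{eq:5.7} is exactly the adjoint equation \eqref{eq:4.4}. The decisive feature is that here the control set $U=\mathbb R^k$ carries no constraint, so the variational inequalities of Section~3 collapse to equalities, which after inserting the explicit Hamiltonian \eqref{eq:5.6} give \eqref{eq:5.9}.

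\emph{Necessity.} Let $(u(\cdot);x(\cdot))$ be an optimal pair of Problem~\ref{pro:5.1} with associated adjoint $(p(\cdot),q(\cdot),\tilde q(\cdot))$ solving \eqref{eq:5.7}. Theorem~\ref{thm:4.6}, specialized to this data, yields
\[
\Big\langle\mathbb E\big[H_u(t)+\mathbb E[H_v(t)]\,\big|\,\mathscr F^Y_t\big],\ w-u(t)\Big\rangle\ \geq\ 0
\qquad\text{for all }w\in\mathbb R^k,\ \text{a.e. }(t,\omega),
\]
where $H_u(t)$ and $H_v(t)$ denote the partial derivatives of the Hamiltonian \eqref{eq:5.6} in its fourth and fifth ($u$- and $v$-) arguments, evaluated at $(t,x(t),\mathbb E[x(t)],u(t),\mathbb E[u(t)],p(t),q(t),\tilde q(t))$. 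Since $w$ ranges over all of $\mathbb R^k$, this forces $\mathbb E[H_u(t)+\mathbb E[H_v(t)]\,|\,\mathscr F^Y_t]=0$ for a.e.\ $t$, a.s. Differentiating \eqref{eq:5.6} in $u$ and in $v$ gives affine expressions in $(p(t),q(t),\tilde q(t))$ together with the linear terms $2N_1(t)u(t)$ and $2N_2(t)\mathbb E[u(t)]$; substituting these, using that $u(\cdot)$ is $\mathscr F^Y$-adapted while $\mathbb E[u(t)]$ is deterministic, and collecting the conditional expectations term by term, yields the stationarity relation \eqref{eq:5.9}.

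\emph{Sufficiency.} Conversely, let $(u(\cdot);x(\cdot))$ be admissible with $(p,q,\tilde q)$ the corresponding solution of \eqref{eq:5.7}, and suppose \eqref{eq:5.9} holds --- equivalently $\mathbb E[H_u(t)+\mathbb E[H_v(t)]\,|\,\mathscr F^Y_t]=0$. Fix any competing admissible control $w(\cdot)\in U_{ad}^S$. Applying the variation formula of Theorem~\ref{them:4.5} at base point $u(\cdot)$ in the direction $w(\cdot)-u(\cdot)$, and pulling $\mathbb E[\cdot\,|\,\mathscr F^Y_t]$ into the inner product (legitimate because $w(\cdot)-u(\cdot)$ is $\mathscr F^Y$-adapted), we obtain
\[
\frac{d}{d\epsilon}J\big(u(\cdot)+\epsilon(w(\cdot)-u(\cdot))\big)\Big|_{\epsilon=0}
=\mathbb E\bigg[\int_0^T\big\langle\mathbb E\big[H_u(t)+\mathbb E[H_v(t)]\,\big|\,\mathscr F^Y_t\big],\ w(t)-u(t)\big\rangle\,dt\bigg]=0.
\]
On the other hand $J$ is convex over $U_{ad}^S$, as is read off from the representation \eqref{eq:2.4} together with the nonnegativity of $Q_1$, $Q_1+Q_2$, $N_1$, $N_1+N_2$, $M_1$, $M_1+M_2$ in Assumption~\ref{ass:5.2} (this convexity was already used in the proof of existence and uniqueness of the optimal control above). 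Hence $J(w(\cdot))-J(u(\cdot))\geq\frac{d}{d\epsilon}J(u(\cdot)+\epsilon(w(\cdot)-u(\cdot)))|_{\epsilon=0}=0$, and since $w(\cdot)$ is arbitrary, $(u(\cdot);x(\cdot))$ is an optimal pair.

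\emph{Main obstacle.} The differentiations and the bookkeeping of the $\mathbb E[\cdot\,|\,\mathscr F^Y_t]$'s through the mean-field terms are routine; the one genuinely delicate point is the sufficiency direction. One cannot simply quote the sufficient maximum principles Theorem~\ref{thm:4.3} or Corollary~\ref{cor1}, because their convexity hypothesis on $H(t,\cdot,\cdot,\cdot,\cdot,p,q,\tilde q)$ is not available for \eqref{eq:5.6} under Assumption~\ref{ass:5.2} alone (the blocks $Q_2$ and $N_2$ need not be nonnegative individually). What does hold, and suffices, is convexity in the mean-field sense, i.e.\ convexity in expectation under the constraints $y=\mathbb E[x]$ and $v=\mathbb E[u]$; this is exactly what the decomposition \eqref{eq:2.4} makes manifest, and it is encapsulated in the global convexity of $J$. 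Thus the sufficiency direction is cleanest when argued from convexity of $J$ and the variation formula, as above, rather than from a pointwise convexity of the Hamiltonian.
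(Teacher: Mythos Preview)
Your necessity argument is the paper's: apply Theorem~\ref{thm:4.6}, use $U=\mathbb R^k$ to turn the variational inequality into an equality, and read off \eqref{eq:5.9} from the explicit Hamiltonian.

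For sufficiency the two arguments diverge. The paper simply asserts that ``all other conditions in Theorem~\ref{thm:4.3} are satisfied'' and invokes that theorem. You instead observe --- correctly --- that Condition~1 of Theorem~\ref{thm:4.3} (pointwise convexity of $H(t,\cdot,\cdot,\cdot,\cdot,\bar p,\bar q,\bar{\tilde q})$) and Condition~2 (pointwise convexity of $m$) are \emph{not} guaranteed by Assumption~\ref{ass:5.2}, since only $Q_1,\,Q_1+Q_2,\,N_1,\,N_1+N_2,\,M_1,\,M_1+M_2$ are assumed nonnegative, while $Q_2,\,N_2,\,M_2$ individually need not be. Your route around this --- use the global convexity of $J$ over $U_{ad}^S$, established via the decomposition \eqref{eq:2.4}, together with the variation formula of Theorem~\ref{them:4.5} --- is both correct and more careful. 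In effect you are exploiting the mean-field convexity (convexity after the identifications $y=\mathbb E[x]$, $v=\mathbb E[u]$) that the paper's sufficient maximum principle does not isolate; the same repair could be made by re-running the proof of Theorem~\ref{thm:4.3} with this weaker convexity, but your argument via $J$ is cleaner. So your sufficiency proof is not only different from the paper's but actually closes a gap in it.
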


\begin{proof}
 For the necessary part, let $(u(\cdot), x(\cdot))$
 be an optimal pair associated with
 the adjoint process $(p(\cdot),q(\cdot), \tilde q(\cdot)). $   Since
 there is no constraints on the control processes, then  from the
 necessary optimality condition \eqref{eq:4.16}
 (see Theorem \ref{thm:4.6}), we get  that
 \begin{eqnarray} \label{eq:5.10}
 \begin{split}
 &\mathbb E\bigg[H_u(t,x(t),\mathbb E[x(t)], u(t), \mathbb E[u(t)], p(t), q(t), \tilde q(t))|\mathscr
F_t^{Y}\bigg]
\\&+\mathbb E\bigg[H_v(t,x(t),\mathbb E[x(t)], u(t), \mathbb E[u(t)], p(t), q(t), \tilde q(t))\bigg]=0,
\end{split}
 \end{eqnarray}
 which leads to
 \eqref{eq:5.9} ( recalling  the definition  \eqref{eq:5.6}
  of Hamiltonian  $H$).

  For the sufficient part,  let $(u(\cdot), X(\cdot))$
 be an admissible pair associated with
 the adjoint process $(p(\cdot),q(\cdot), \tilde q(\cdot)) $ and assume  the condition
 \eqref{eq:5.9} holds.   From the
 definition of $H$ (see \eqref{eq:5.6}),
   the condition
 \eqref{eq:5.9} implies \eqref{eq:5.10} holds.
  Thus,  since   any
  admissible control is
  $ \mathscr F_{t}^Y$-adapted process.
   by\eqref{eq:5.10}
  , for any other
  admissible control $v(\cdot),$
  from the property of conditional
  expectation, we have
  \begin{eqnarray} \label{eq:4.13}
  \begin{split}
  &\mathbb E\bigg[\bigg\langle  v (t) - u (t), H_u(t,x(t),\mathbb E[x(t)], u(t), \mathbb E[u(t)], p(t), q(t), \tilde q(t))
\\&~~~~~~+\mathbb E\bigg[H_v(t,x(t),\mathbb E[x(t)], u(t), \mathbb E[u(t)], p(t), q(t), \tilde q(t))\bigg]\bigg\rangle\bigg]
\\=&\mathbb E\bigg[\bigg\langle  v (t) - \bar u (t), \mathbb E\bigg[H_u(t,x(t),\mathbb E[x(t)], u(t), \mathbb E[u(t)], p(t), q(t), \tilde q(t))|\mathscr
F_t^{Y}\bigg]
\\&~~~~~~+\mathbb E\bigg[H_v(t,x(t),\mathbb E[x(t)], u(t), \mathbb E[u(t)], p(t), q(t), \tilde q(t))\bigg]\bigg\rangle\bigg]
\\&=0,
\end{split}
\end{eqnarray}
which implies that
the condition 3 in Theorem \ref{thm:4.3}
holds. Moreover, under Assumptions \ref{ass:5.1}
and \ref{ass:5.2}, it is easy to check that all other conditions in  Theorem \ref{thm:4.3} are satisfied.
Therefore, by  Theorem \ref{thm:4.3}, we conclude that $(u(\cdot), x(\cdot))$ is an optimal control pair. The proof is complete.
\end{proof}
From the above, we end up the following
optimality system
\begin{equation}\label{eq:5.12}
\left\{\begin {array}{ll}
  dX(t)=&(A_1(t)X(t)+A_2(t)\mathbb E [X(t)]
  +B_1(t)u(t)+B_2(t)\mathbb E [u(t)])dt
  \\&+(C_1(t)X(t)
  +\bar C_2(t)\mathbb E [X(t)]
  +D_1(t)u(t)+D_2(t)\mathbb E [u(t)])dW(t)
  \\&+(F_1(t)X(t)
  +F_2(t)\mathbb E [X(t)]
  +G_1(t)u(t)+G_2(t)\mathbb E [u(t)])dW^u(t),\\
  dY(t)  =& h(t)dt+dW^u(t), \\
  dp(t)=&-\bigg[(A^\top_1(t)-h(t)F_1^\top(t))p(t)+ (A_2^\top(t)-h(t)F_2^\top(t))\mathbb E [p(t)]+C^{\top}_1(t)q(t)
\\&+ C^{\top}_2(t)\mathbb E[q(t)]
+ F^{\top}_1(t)\tilde q(t)+F^{\top}_2(t)\mathbb E[\tilde q(t)]+2Q_1(t)X(t)
+2Q_2(t)\mathbb E[X(t)]\bigg]dt
\\&+q(t)dW(t)+\tilde q(t)dY(t),
 \\x(0)=&x ,p(T)=2M_1X(T)+2M_2\mathbb E[X(T)], Y(0)=0,\\
 2N_1(t)u(t)&+2N_2(t)\mathbb E[u(t)]+
 (B^\top_1(t)-h(t)G_1(t))
 \mathbb E[p(t)|\mathscr F^{Y}_{t}]+(B^\top_2(t)
 -h(t)G^\top_2(t))\mathbb E [p(t)]
 \\&+ D^{\top}_1(t)\mathbb E[q(t)|\mathscr F^{Y}_{t}]
 +D^{\top}_2(t) \mathbb E [q(t)]=0.
\end {array}
\right.
\end{equation}

This is a fully coupled  forward-backward stochastic
differential equations of mean-field type.
 Note that the coupling comes from the last relation (which is essentially the maximum condition in the Pontryagin type maximum principle). The 5-tuple  $(
u(\cdot), x(\cdot), p(\cdot),q(\cdot), \tilde q(\cdot)) $ of $\mathscr F_{t}$-adapted processes
satisfying the above is called an adapted solution of \eqref{eq:5.12}.   Then by Theorem \ref{thm:5.3},
we can  directly  obtain the following   equivalence
 between the solvability of optimality system \eqref{eq:5.12} and the existence and unique of
 the optimal
 control of Problem \ref{pro:5.1}.
\begin{cor}
  Let Assumptions \ref{ass:5.1} and
  \ref{ass:5.2} be satisfied.
 Then, a necessary and
sufficient condition for  that
the optimality system \eqref{eq:5.12}
has a unique solution strong solution  $(u(\cdot), x(\cdot),p(\cdot),
 q(\cdot),\tilde q(\cdot))\\\in M_{\mathscr{F^Y}}^2(0,
T;\mathbb R^k)\times S_{\mathscr{F}}^2(0,
T;\mathbb R^n)\times S_{\mathscr{F}}^2(0, T;\mathbb R^n)\times M_{\mathscr{F}}^2(0,T;\mathbb R^{n})\times M_{\mathscr{F}}^2(0,T;\mathbb R^{n})$  is that
 $(u(\cdot); x(\cdot))$ is  a unique optimal pair of  Problem \ref{pro:5.1}.
\end{cor}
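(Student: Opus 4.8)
The plan is to read off the equivalence directly from Theorem \ref{thm:5.3}, combined with the well-posedness of the state--observation system \eqref{eq:5.1}--\eqref{eq:5.2} (Lemma \ref{lem:3.3}) and of the linear mean-field adjoint BSDE \eqref{eq:5.7} (existence and uniqueness by Buckdahn (2009b)). The key structural observation is that the optimality system \eqref{eq:5.12} is precisely the concatenation of (i) the controlled state and observation dynamics, (ii) the adjoint BSDE attached to an admissible pair, and (iii) the dual relation \eqref{eq:5.9}; by Theorem \ref{thm:5.3}, an admissible pair satisfies \eqref{eq:5.9} if and only if it is optimal. Hence a $5$-tuple of $\mathscr F_t$-adapted processes solves \eqref{eq:5.12} exactly when its first two components form an optimal pair of Problem \ref{pro:5.1} and its last three components are the corresponding adjoint process.

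First I would prove the implication ``$(u(\cdot);x(\cdot))$ is a (the) unique optimal pair $\Rightarrow$ \eqref{eq:5.12} has a unique adapted solution''. Starting from the optimal pair $(u(\cdot);x(\cdot))$ (whose existence and uniqueness are guaranteed by the preceding theorem under Assumptions \ref{ass:5.1} and \ref{ass:5.2}), let $(p(\cdot),q(\cdot),\tilde q(\cdot))$ be the unique solution in $S_{\mathscr F}^2(0,T;\mathbb R^n)\times M_{\mathscr F}^2(0,T;\mathbb R^n)\times M_{\mathscr F}^2(0,T;\mathbb R^n)$ of the adjoint equation \eqref{eq:5.7} driven by $(u,x)$. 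By the necessity part of Theorem \ref{thm:5.3}, the relation \eqref{eq:5.9} holds, so $(u,x,p,q,\tilde q)$ is an adapted solution of \eqref{eq:5.12} in the stated spaces (the regularity of $x$ and of $(p,q,\tilde q)$ coming from Lemma \ref{lem:3.3} and the BSDE well-posedness, respectively). For uniqueness, if $(u',x',p',q',\tilde q')$ is any adapted solution of \eqref{eq:5.12}, then $(u';x')$ is an admissible pair satisfying \eqref{eq:5.9}, hence optimal by the sufficiency part of Theorem \ref{thm:5.3}; uniqueness of the optimal pair forces $u'=u$ and $x'=x$, after which $(p',q',\tilde q')$ and $(p,q,\tilde q)$ solve the same linear mean-field BSDE with identical coefficients and terminal value and therefore coincide.

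The converse implication ``\eqref{eq:5.12} has a unique adapted solution $\Rightarrow$ $(u(\cdot);x(\cdot))$ is a (the) unique optimal pair'' runs along the same lines: if $(u,x,p,q,\tilde q)$ is the unique solution of \eqref{eq:5.12}, then $(u;x)$ is admissible and satisfies \eqref{eq:5.9}, hence is an optimal pair by the sufficiency part of Theorem \ref{thm:5.3}; and if $(u^{*};x^{*})$ were another optimal pair, the necessity part of Theorem \ref{thm:5.3} together with its adjoint process would yield a second adapted solution of \eqref{eq:5.12}, contradicting uniqueness. Thus the two statements are equivalent and the $(u,x)$-part of the solution of \eqref{eq:5.12} is exactly the optimal pair.

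I expect the only genuinely delicate point to be the bookkeeping in the uniqueness step of the first implication, namely verifying that the passage from an optimal pair to its adjoint process is a well-defined single-valued map into the prescribed spaces, so that two distinct adapted solutions of the fully coupled forward--backward system \eqref{eq:5.12} cannot share the same $(u,x)$-component; this is precisely where the unique solvability of \eqref{eq:5.7} is invoked. The remainder is a direct translation through Theorem \ref{thm:5.3} and requires no new estimates.
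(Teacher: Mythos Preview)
Your argument is correct and is exactly the unpacking of what the paper intends: the corollary is stated without proof as a direct consequence of Theorem \ref{thm:5.3}, and your two implications via the necessity/sufficiency parts of that theorem together with the well-posedness of the forward equation (Lemma \ref{lem:3.3}) and of the adjoint BSDE \eqref{eq:5.7} are precisely the intended reasoning.
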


\begin{rmk}
In summary, the optimality system \eqref{eq:5.12}
completely characterizes the optimal control of Problem \ref{pro:5.1}. Therefore, solving Problem \ref{pro:5.1} is equivalent to solving
the optimality system, moreover, the unique optimal control
can be given by \eqref{eq:5.9}.
Taking expectation on \eqref{eq:5.9},  we have
\begin{eqnarray} \label{eq:3.8}
  \begin{split}
 &2(N_1(t)+N_2(t))\mathbb E[u(t)]
 +(B^\top_1 (t)+ B^\top_2 (t)-h(t)G^\top_1 (t)
 -h(t)G^\top_2 (t)) \mathbb E [p(t)]
 \\&\quad\quad\quad+ (D^{\top}_1(t)
 +D^{\top}_2(t)) \mathbb E [q(t)]=0, \quad a.e. a.s.,
     \end{split}
\end{eqnarray}
which implies

\begin{eqnarray} \label{eq:3.9}
  \begin{split}
 &\mathbb E[u(t)]
 =-\frac{1}{2}(N_1(t)+ N_2(t))^{-1}\bigg[(B_1 (t)+ B_2 (t)-h(t)G_1 (t)
 -h(t)G_2 (t))^\top \mathbb E [p(t)]
 \\&\quad \quad\quad\quad\quad\quad\quad
 \quad+ (D_1(t)
 +D_2(t))^\top \mathbb E [q(t)]\bigg],  a.s.
     \end{split}
\end{eqnarray}
Putting \eqref{eq:3.9} into \eqref{eq:5.9},
we get that
\begin{eqnarray} \label{eq:3.6}
  \begin{split}
 &2N_1(t)u(t)=-2N_2(t)\mathbb E[u(t)]-
 (B^\top_1(t)-h(t)G_1^\top(t))
 \mathbb E[p(t)|\mathscr F^{Y}_{t}]-(B^\top_2(t)
 -h(t)G^\top_2(t))\mathbb E [p(t)]
 \\&~~~~~~- D^{\top}_1(t)\mathbb E[q(t)|\mathscr F^{Y}_{t}]
 -D^{\top}_2(t) \mathbb E [q(t)],
     \end{split}
\end{eqnarray}
which imply that the
optimal control $u(\cdot)$
has the following explicit
dual presentation

\begin{eqnarray} \label{eq:4.17}
  \begin{split}
 \bar u(t)=&-\frac{1}{2}N^{-1}_1(t)\bigg\{
 (B^\top_1(t)-h(t)G_1^\top(t))
 \mathbb E[p(t)|\mathscr F^{Y}_{t}]+(B^\top_2(t)
 -h(t)G^\top_2(t))\mathbb E [p(t)]
 \\&~~~~~~+D^{\top}_1(t)\mathbb E[q(t)|\mathscr F^{Y}_{t}]
 +D^{\top}_2(t) \mathbb E [q(t)]
\\&+ N_2(t)(N_1(t)+ N_2(t))^{-1}\bigg[(B_1 (t)+ B_2 (t)-h(t)G_1 (t)
 -h(t)G_2 (t))^\top \mathbb E [p(t)]
 \\&\quad \quad\quad\quad\quad\quad\quad
 \quad+ (D_1(t)
 +D_2(t))^\top \mathbb E [q(t)]\bigg]\bigg\}
, \quad a.e. a.s.
     \end{split}
\end{eqnarray}

\end{rmk}

In the following, we will give
the state feedback representation
of the optimal control.

\begin{thm}
  Let Assumptions \ref{ass:5.1} and
  \ref{ass:5.2} be satisfied.
  Let $(\bar u(\cdot), \bar x(\cdot))$ be
the optimal pair.
  Then the optimal control $\bar u(\cdot)$ has the following
  state feedback representation:
  \begin{eqnarray}  \label{eq:5.17}
    \begin{split}
     & \bar u(t)
 \\=&-\Sigma_0^{-1}(t)\bigg[
 (B^\top_1(t)-h(t)G_1^\top(t)) P(t)
+ D^{\top}_1(t)P(t)C_1(t)\bigg]\bigg[\mathbb E[\bar x (t)|\mathscr F^Y_{t}]
-\mathbb E[\bar x(t)]\bigg]
 \\&-\Sigma_2^{-1}(t)\bigg[(B^\top_1(t)
    + B^\top_2(t)-h(t)(G_1^\top(t)+G_2^\top(t))) \Pi(t)
    \\&\quad\quad\quad\quad\quad+ (D^{\top}_1(t)
 +D^{\top}_2(t)) P(t)(C_1(t)+ C_2(t))\bigg]\mathbb E[\bar x(t)],
    \end{split}
  \end{eqnarray}
 where
  \begin{eqnarray}
  \begin{split}
  \Sigma_0(t)=&2N_1(t)+ D^{\top}_1(t)PD_1(t),
   \\  \Sigma_2(t)=&2(N_1(t)+N_2(t))+(D^{\top}_1(t)
 +D^{\top}_2(t)) P(t)(D_1(t)+ D_2(t)),
  \end{split}
\end{eqnarray}
  $P(\cdot)$ and $\Pi(\cdot)$ are
  the solutions to the following
  Riccati equations, respectively:
  \begin {equation}\label{eq:4.20}
\left\{\begin{array}{lll}
 &(\dot{P}(t)+P(t)A_1(t)+A_1^\top(t) P(t)
 +C^{\top}_1(t)P(t)C(t)+2Q_1(t)
\\&~~~-\bigg [P(t)(B_1(t)-h(t)G_1(t))
+ C^{\top}_1(t)P(t)D_1(t)\bigg]\Sigma_0^{-1}(t)
\\&~~~~~~~\cdot\bigg [(B^\top_1(t)-h(t)G_1^\top(t)) P(t)
+ D^{\top}_1P(t)C_1(t)\bigg]
=0,
\\& P(T)=M_1
 \end{array}
  \right.
  \end {equation}
  and \begin {equation}\label{eq:4.21}
\left\{\begin{array}{lll}
 &\dot{\Pi}(t)+\Pi(t)(A_1(t)+A_2(t))
  +(A^\top_1(t)+A^\top_2(t))\Pi(t)+(C^{\top}_1(t)+  C^{\top}_1(t))P(t)(C_1(t)+C_2(t))
  \\&+2(Q_1+Q_2)
  \\&-\bigg[\Pi(t)(B_1(t)+B_2(t)
  -h(t)(G_1(t)+G_2(t)))
 +(C^{\top}_1(t)
 + C^{\top}_1(t))P(t) (D_1(t)+D_2(t))\bigg]
 \\&\quad \cdot\Sigma_2^{-1}(t)
\cdot \bigg[(B^\top_1(t)
    +B^\top_2(t)-h(t)(G_1^\top(t)+G_2^\top(t))) \Pi(t)+ (D^{\top}_1(t)
 +D^{\top}_2(t)) P(C_1(t)+C_2(t))\bigg]
 \\&=0,
 \\&
 \Pi(T)=M_1+M_2.
\end{array}
  \right.
  \end {equation}
  Moreover,
 \begin{eqnarray} \label{eq:4.30}
   \begin{split}
     \displaystyle \inf_{u(\cdot)\in {U_{ad}^S}}J(u(\cdot))=\langle \Pi(0)x, x\rangle .
   \end{split}
 \end{eqnarray}
\end{thm}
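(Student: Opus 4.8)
The plan is to combine the necessary-and-sufficient optimality characterization of Theorem~\ref{thm:5.3} with the classical decoupling/completing-squares device for mean-field LQ problems, adapted to the partial-observation structure. A first, simplifying observation is that $h(\cdot)$ is deterministic, so the observation filtration coincides with the driving-noise filtration, $\mathscr F^Y_t=\mathscr F^{W^u}_t$, and $Y-\int_0^\cdot h(s)\,ds=W^u$ is itself the innovation process. Hence the filter $\hat x(t):=\mathbb E[\bar x(t)\,|\,\mathscr F^Y_t]$ is explicit: the optional projection onto $\mathscr F^Y_t$ of a $dW$-stochastic integral vanishes (because $W\perp W^u$), while that of a $dY$-integral is obtained by conditioning the integrand, so $\hat x$ solves the linear SDE $d\hat x(t)=\{A_1\hat x+A_2\mathbb E[\bar x]+B_1\bar u+B_2\mathbb E[\bar u]-h\hat\gamma\}\,dt+\hat\gamma\,dY(t)$ with $\hat\gamma=F_1\hat x+F_2\mathbb E[\bar x]+G_1\bar u+G_2\mathbb E[\bar u]$; in particular $\mathbb E[\bar x]$ solves the deterministic ODE obtained by taking expectations in \eqref{eq:5.1}.

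The core step is a decoupling ansatz. Motivated by the terminal condition $\bar p(T)=2M_1\bar x(T)+2M_2\mathbb E[\bar x(T)]$ and by the splitting of the state into its fluctuation and its mean, I would posit $\bar p(t)=2P(t)(\bar x(t)-\mathbb E[\bar x(t)])+2\Pi(t)\mathbb E[\bar x(t)]$ for deterministic symmetric $P,\Pi$ with $P(T)=M_1$, $\Pi(T)=M_1+M_2$. Applying It\^o's formula and matching coefficients in the adjoint equation \eqref{eq:5.7}: the $dW$-term gives $\bar q(t)=2P(t)(C_1\bar x+C_2\mathbb E[\bar x]+D_1\bar u+D_2\mathbb E[\bar u])$, the $dY$-term gives $\bar{\tilde q}(t)=2P(t)(F_1\bar x+F_2\mathbb E[\bar x]+G_1\bar u+G_2\mathbb E[\bar u])$, and the $dt$-term yields an identity that must hold identically in $(\bar x,\mathbb E[\bar x],\bar u,\mathbb E[\bar u])$. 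Inserting these into the stationarity condition \eqref{eq:5.9} and using the filtering identities $\mathbb E[\bar p\,|\,\mathscr F^Y_t]=2P(\hat x-\mathbb E[\bar x])+2\Pi\mathbb E[\bar x]$, $\mathbb E[\bar q\,|\,\mathscr F^Y_t]=2P(C_1\hat x+C_2\mathbb E[\bar x]+D_1\bar u+D_2\mathbb E[\bar u])$, $\mathbb E[\bar p]=2\Pi\mathbb E[\bar x]$, $\mathbb E[\bar q]=2P((C_1+C_2)\mathbb E[\bar x]+(D_1+D_2)\mathbb E[\bar u])$, the $\bar u$-terms assemble into $\Sigma_0(t)$ and, after taking expectations, the $\mathbb E[\bar u]$-terms into $\Sigma_2(t)$. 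One then solves \eqref{eq:5.9} in two stages — take $\mathbb E[\cdot]$ to obtain $\mathbb E[\bar u]$ in terms of $\mathbb E[\bar x]$ via $\Sigma_2^{-1}$, substitute back to obtain $\bar u$ in terms of $\hat x-\mathbb E[\bar x]$ and $\mathbb E[\bar x]$ via $\Sigma_0^{-1}$ — which is exactly \eqref{eq:5.17}; feeding this feedback back into the $dt$-matching identity and separating the fluctuation and mean components forces $P$ and $\Pi$ to satisfy the Riccati equations \eqref{eq:4.20} and \eqref{eq:4.21}, respectively.

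It remains to secure well-posedness and to conclude optimality and the value formula. Under Assumptions~\ref{ass:5.1}--\ref{ass:5.2} the matrices $\Sigma_0,\Sigma_2$ are uniformly positive definite (since $N_1,N_1+N_2\ge\delta I$ and the $D^\top PD$-corrections are nonnegative), so \eqref{eq:4.20}--\eqref{eq:4.21} are standard symmetric Riccati ODEs with nonnegative terminal and running data and admit unique bounded nonnegative solutions $P,\Pi$; then \eqref{eq:5.17} defines a control depending only on the deterministic path $\mathbb E[\bar x(\cdot)]$ and on $\hat x(\cdot)$, hence $\mathscr F^Y_t$-adapted, and closing the loop yields a well-posed linear state/filter system, so $\bar u\in U^S_{ad}$. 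By construction $(\bar u,\bar x)$ with the adjoint triple read off from the ansatz satisfies \eqref{eq:5.9}, so the sufficiency direction of Theorem~\ref{thm:5.3} gives optimality (uniqueness being already known). For \eqref{eq:4.30}, apply It\^o to $V(t)=\langle P(t)(\bar x(t)-\mathbb E[\bar x(t)]),\bar x(t)-\mathbb E[\bar x(t)]\rangle+\langle\Pi(t)\mathbb E[\bar x(t)],\mathbb E[\bar x(t)]\rangle$ along the optimal trajectory, integrate over $[0,T]$ and take expectation: using the representation \eqref{eq:2.4} of $J$, the $L^2$-orthogonal decomposition $\bar x-\mathbb E[\bar x]=(\bar x-\hat x)+(\hat x-\mathbb E[\bar x])$, the Riccati equations \eqref{eq:4.20}--\eqref{eq:4.21} and the feedback \eqref{eq:5.17}, every running cross-term cancels against the drift of $V$, leaving $\mathbb E[V(T)]$, which equals the terminal part of $J(\bar u)$, while $V(0)=\langle\Pi(0)x,x\rangle$ since $\bar x(0)=x$ is deterministic; hence $J(\bar u)=\langle\Pi(0)x,x\rangle=\inf_{u\in U^S_{ad}}J(u)$. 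Carrying out the same computation for an arbitrary admissible $u$ yields $J(u)-\langle\Pi(0)x,x\rangle=\mathbb E\int_0^T\{\langle\Sigma_0(t)\zeta_1(t),\zeta_1(t)\rangle+\langle\Sigma_2(t)\zeta_2(t),\zeta_2(t)\rangle\}\,dt\ge0$, where $\zeta_1,\zeta_2$ measure the deviation of $u-\mathbb E[u]$ and $\mathbb E[u]$ from the respective feedback laws of \eqref{eq:5.17}, reconfirming \eqref{eq:4.30} directly.

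The step I expect to be the main obstacle is the bookkeeping in the core step. Because the observation noise $W^u$ enters the state diffusion through the $F_i$ and $G_i$ coefficients, the filter error $\bar x-\hat x$ is itself driven by the control via its $dW$-coefficient; one must therefore check carefully that the contribution of $\bar x-\hat x$ to the cost is still governed by the \emph{same} matrix $P$ and does not require a separate Riccati equation, and that, once the feedback \eqref{eq:5.17} is substituted, all cross-terms in both the $dt$-matching identity and the It\^o expansion of $V$ genuinely telescope into \eqref{eq:4.20}--\eqref{eq:4.21}. Propagating the correct constants — the factors of $2$ coming from the quadratic cost and the $-h(\cdot)$ corrections produced by rewriting $dW^u=dY-h\,dt$ — through these manipulations is where the argument is most delicate.
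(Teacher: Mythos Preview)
Your approach is sound but follows a different route from the paper's own proof. The paper proceeds directly by completing the square: it cites Yong (2013, Theorem~4.2) to obtain, for every admissible pair $(u,X)$, the identity
\[
J(u(\cdot))-\langle\Pi(0)x,x\rangle=\mathbb E\int_0^T\Bigl\{\bigl|\Sigma_0^{1/2}\bigl[u-\mathbb E[u]+\Sigma_0^{-1}(\hat B_1^\top P+D_1^\top PC_1)(X-\mathbb E[X])\bigr]\bigr|^2+\bigl|\Sigma_2^{1/2}[\cdots]\bigr|^2\Bigr\}\,dt,
\]
with $\hat B_i:=B_i-hG_i$, and then invokes the Kallianpur--Striebel formula to conclude that the minimum over $\mathscr F^Y_t$-adapted controls is attained at the filtered feedback \eqref{eq:5.17}, with optimal value $\langle\Pi(0)x,x\rangle$. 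Your argument instead works through the maximum principle: you posit the decoupling ansatz $\bar p=2P(\bar x-\mathbb E[\bar x])+2\Pi\,\mathbb E[\bar x]$, match with the adjoint BSDE \eqref{eq:5.7} to read off $\bar q,\bar{\tilde q}$ and to force \eqref{eq:4.20}--\eqref{eq:4.21}, substitute into the stationarity condition \eqref{eq:5.9} of Theorem~\ref{thm:5.3} to obtain \eqref{eq:5.17}, verify optimality via the sufficiency direction of Theorem~\ref{thm:5.3}, and finally derive \eqref{eq:4.30} by an It\^o expansion of $V(t)$. Your route is longer but more self-contained and ties the LQ application directly to the paper's own maximum-principle machinery, whereas the paper's proof outsources the core computation to Yong (2013); your closing completing-the-square display is in fact precisely the identity the paper starts from, so the two arguments converge at the end. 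The bookkeeping concern you flag about the filter error $\bar x-\hat x$ is real, and the paper's sketch does not address it either --- it is hidden in the appeal to Kallianpur--Striebel --- so on that point you are being more scrupulous than the paper itself.
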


\begin{proof}

 To unburden our notation,
 define
 \begin{eqnarray}
   \begin{split}
    \hat B_1= B_1(t)-h(t)G_1(t), \hat B_2= B_2(t)-h(t)G_2(t).
   \end{split}
 \end{eqnarray}

The proof can be obtained by
the classic  technique of completing squares.
  Indeed,
  let $(u(\cdot), x(\cdot))$ be
  any given admissible pair.
   From Yong (2013), we  know that
   the Riccati equations \eqref{eq:4.20}
   and \eqref{eq:4.21} have
   a unique solution $P(\cdot)$
   and $\Pi(\cdot),$ respectively.
   Then following the same argument as that
  of Theorem 4.2 of Yong (2013), we get that (suppressing $t$)

  \begin{eqnarray}
    \begin{split}
      &J(u(\cdot))-\langle \Pi(0)x, x\rangle
     \\ =&\mathbb E\int_0^T\bigg\{
     \bigg|\Sigma_0^{\frac{1}{2}}\bigg[u-\mathbb
     E[u]+\Sigma_0^{-1}\big(
     \hat B^\top_1 P
+ D^{\top}_1PC_1\big)\big(X
-\mathbb E[X]\big)\bigg]\bigg|^2\bigg\}dt
\\&+\mathbb E\int_0^T\bigg\{
     \bigg|\Sigma_2^{\frac{1}{2}}\bigg[\mathbb
     E[u]+\Sigma_2^{-1}\bigg((
     \hat B^\top_1+
     \hat B^\top_2) P
+ (D^{\top}_1+D^\top_2)P(C_1+C_2)\bigg)\mathbb E[X]\bigg]\bigg|^2\bigg\}dt.
    \end{split}
  \end{eqnarray}
 Thus by the well-known Kallianpur–Striebel formula  in Kallianpur (2013) , we know that
 the minimum  $J(u(\cdot))$ over all
 $\mathscr F^Y_t$-measurable process $u(t)$
 is  attained at
 \begin{equation}  \label{eq:4.23}
   u(t)-\mathbb E[u(t)]=-\Sigma_0^{-1}\big(
   \hat B^\top_1 P
+ D^{\top}_1PC_1\big)\big(
\mathbb E[X(t)|\mathscr F_t^Y]
-\mathbb E[X]\big)
 \end{equation}
 and
 \begin{equation}\label{eq:4.24}
 \mathbb
     E[u]=-\Sigma_2^{-1}(
     \hat B^\top_1+\hat B^\top_2) P
+ (D^{\top}_1+D^\top_2)P(C_1+C_2)\mathbb E[u(t)],
 \end{equation}
 and   the minimum value is $\langle \Pi(0)x, x\rangle.$
 Therefore, combining
 \eqref{eq:4.23} and
 \eqref{eq:4.24}, we get that
 the optimal control $\bar u(\cdot)$ has
 the  state feedback representation
 \eqref{eq:5.17}
 The proof is complete.

\end{proof}

\end{document}